%-----------------------------------------------------------------------
% Beginning of qam-l-template.tex
%-----------------------------------------------------------------------
%
%     AMS-LaTeX v.2 template for the Quarterly of Applied Mathematics
%
%     Templates for various common text, math and figure elements are
%     given following the \end{document} line.
%
%%%%%%%%%%%%%%%%%%%%%%%%%%%%%%%%%%%%%%%%%%%%%%%%%%%%%%%%%%%%%%%%%%%%%%%%

%     Remove any commented or uncommented macros you do not use.

\documentclass{qam-l}

%     If you need symbols beyond the basic set, uncomment this command.
%\usepackage{amssymb}

%     If your article includes graphics, uncomment this command.
\usepackage{graphicx}

%     Update the information and uncomment if Brown University is not
%     the copyright holder.
%\copyrightinfo{2009}{Brown University}

\newtheorem{theorem}{Theorem}[section]
\newtheorem{lemma}[theorem]{Lemma}
\newtheorem{corollary}[theorem]{Corollary}

\theoremstyle{definition}

\theoremstyle{remark}
\newtheorem{remark}[theorem]{Remark}

\numberwithin{equation}{section}

%% New commands
\newcommand{\numberthis}{\addtocounter{equation}{1}\tag{\theequation}}
\newcommand{\ds}{\displaystyle}
\newcommand{\dd}{\, \mathrm{d}}
\newcommand{\ii}{\mathrm{i}}
\newcommand{\ee}{\mathrm{e}}
\newcommand{\Vhat}{\widehat{V}}
\newcommand{\rhohat}{\widehat{\rho}}

\newcommand{\g}{g}%{g(\delta,|k|,\beta,\lambda,a)}
\newcommand{\gk}{g}%{g(\delta,k,\beta,\lambda,a)}
\newcommand{\Pset}{\mathcal{P}}
\newcommand{\eqand}{\quad \text{and} \quad}

\newcommand{\myspace}{\\[0.4cm]}

\DeclareMathOperator{\supp}{supp}

\begin{document}

% \title[short text for running head]{full title}
\title[Sensitivity of Anomalous Localized Resonance Phenomena]%
{Sensitivity of anomalous localized resonance phenomena with respect to
dissipation}

%    Only \author and \address are required; other information is
%    optional.  Remove any unused author tags.
%    Enter the address for every author, even if some are the same.

%    author one information
% \author[short version for running head]{name for top of paper}
\author[T. Meklachi]{Taoufik Meklachi}
\address{%
    University of Houston,
    Department of Mathematics, 651 PGH
    Houston, Texas, 770204-3008}
%\curraddr{}
\email{tmclachi@math.uh.edu}
\thanks{The work of Taoufik Meklachi was supported by the 
        Air Force through grant AFOSR YIP Early Career Award FA9550-13-1-0078}

%    author two information
\author[G. W. Milton]{Graeme W. Milton}
\address{%
    University of Utah,
    Department of Mathematics,
    155 South 1400 East, Room 233,
    Salt Lake City, Utah, 84112-0090}
%\curraddr{}
\email{milton@math.utah.edu}
\thanks{The work of Graeme W. Milton was supported by the National
        Science Foundation through grant DMS-1211359.}

%    author three information
\author[D. Onofrei]{Daniel Onofrei}
\address{%
    University of Houston,
    Department of Mathematics, 651 PGH,
    Houston, Texas, 770204-3008}
%\curraddr{}
\email{onofrei@math.uh.edu}
\thanks{The work of Daniel Onofrei was supported under the Simons
        Collaborative Grant and the Air Force through grant AFOSR YIP Early 
        Career Award FA9550-13-1-0078}

%    author four information
\author[A. E. Thaler]{Andrew E. Thaler}
\address{%
    University of Utah,
    Department of Mathematics,
    155 South 1400 East, Room 233,
    Salt Lake City, Utah, 84112-0090}
%\curraddr{}
\email{thaler@math.utah.edu}
\thanks{The work of Andrew E. Thaler was supported by the National
        Science Foundation through grant DMS-1211359.}

%    author five information
\author[G. Funchess]{Gregory Funchess}
\address{%
    University of Houston, 
    Department of Mathematics, 651 PGH,
    Houston, Texas, 770204-3008}
%\curraddr{}
\email{gfunchess@gmail.com}
\thanks{The work of Gregory Funchess was supported by the 
        Air Force through grant AFOSR YIP Early Career Award FA9550-13-1-0078}

%    \subjclass is required.
%\subjclass[2000]{Primary }
%    The 2010 edition of the Mathematics Subject Classification is
%    now available.  If you are citing a classification from the
%    new scheme, use the following input coding instead.
\subjclass[2010]{35Q60}

\date{}

%\dedicatory{}

%    Abstract is required.
\begin{abstract}
    We analyze cloaking due to anomalous localized resonance in the
    quasistatic regime in the case when a general charge density
    distribution is brought near a slab superlens. If the charge density
    distribution is within a critical distance of the slab, then the
    power dissipation within the slab blows up as certain electrical
    dissipation parameters go to zero. The potential remains bounded far
    away from the slab in this limit, which leads to cloaking due to
    anomalous localized resonance. On the other hand, if the charge
    density distribution is further than this critical distance from the
    slab, then the power dissipation within the slab remains bounded and
    cloaking due to anomalous localized resonance does not occur. The
    critical distance is shown to strongly depend on the the rate at
    which the dissipation outside of the slab goes to zero.  
\end{abstract}

\maketitle

%%%%%%%%%%%%%%%%%%%%%%%%%%%%%%%%%%%%%%%%%%%%%%%%%%%%%%%%%%%%%%%%%%%%%%%%
%%%%%%%%%%%%%%%%%%%%%%%%%%%% Article Text %%%%%%%%%%%%%%%%%%%%%%%%%%%%%%
%%%%%%%%%%%%%%%%%%%%%%%%%%%%%%%%%%%%%%%%%%%%%%%%%%%%%%%%%%%%%%%%%%%%%%%%

\section{Introduction}\label{sec:introduction}

In this paper, we discuss anomalous localized resonance phenomena
observed at the interface between positive-index and negative-index
materials. Such phenomena have been at the center of an interesting
cloaking strategy \cite{Milton:2006:CEA, Bruno:2007:SCS,
Milton:2007:OPL, Nicorovici:2007:QCT, Milton:2008:SFG,
Nicorovici:2008:FWC, Nicorovici:2009:CPR, Bouchitte:2010:CSO,
Nicorovici:2011:RLD, Xiao:2012:TEC, Ammari:2013:ALR, Ammari:2013:STN,
Ammari:2013:STNII, Nguyen:2013:SUC, Bergman:2014:PIP, Thaler:2014:BVI}.

As illustrated in Figure~\ref{fig:slab}, the (2D) geometry we consider
consists of a central layer in $\mathcal{S} \equiv [0,a] \times
(-\infty, +\infty)$ bordered by a layer to the left in $\mathcal{C}
\equiv (-\infty,0) \times (-\infty,+\infty)$ and a layer to the right in
$\mathcal{M} \equiv (a,+\infty) \times (-\infty, +\infty)$. We work in
the nonmagnetic quasistatic regime, i.e., the regime in which the
magnetic permeability equals 1 and relevant wavelengths and attenuation
lengths are much larger than other dimensions in the problem (such as
$a$, the thickness of the slab $\mathcal{S}$). In this regime the
complex electric potential $V$ satisfies the Laplace equation 
\begin{equation}\label{eq:V_pde}
        -\nabla\cdot\left[\varepsilon(x,y)\nabla V(x,y)\right] = \rho 
            \quad \text{in} \ \mathbb{R}^2, \\ 
\end{equation}
where $\varepsilon$ is the dielectric constant (relative permittivity)
and $\rho$ is a given charge density distribution. (The potential $V$ is
also subject to certain continuity conditions and conditions at infinity
--- these are discussed in Section~\ref{sec:derivation_V}.) We assume
that the charge density distribution $\rho$ is real valued; we also take
$\rho \in \Pset$, where
\begin{equation}\label{eq:P_def}
    \Pset \equiv \{\rho \in L^2(\mathcal{M}) \cap
        L^{\infty}(\mathcal{M}) : \rho \ \text{has compact support in}
        \ \mathcal{M}\}.
\end{equation}
Throughout this paper we also assume
\begin{equation}\label{eq:supp_rho}
    0 < |\supp \rho| < \infty,
\end{equation}
where $|U|$ denotes the Lebesgue measure of the set $U$. Note that this
restriction on the support of $\rho$ excludes dipolar sources.

For the purposes of the current paper we assume the layers are occupied
by three different materials such that the imaginary parts of their
dielectric constants are small (corresponding to small losses) and the
real parts of their dielectric constants are equal but with opposite
signs. In particular we take the dielectric constant $\varepsilon(x,y)$
to be
\begin{equation}\label{eq:dc}
    \varepsilon(x,y) \equiv 
    \begin{cases} 
        \varepsilon_c = 1+\ii\mu & \text{if } x < 0, \\
        \varepsilon_s = -1+\ii\delta & \text{if } 0 \le x \le a, \\
        \varepsilon_m = 1 & \text{if } x > a,
    \end{cases}
\end{equation}
where $0 < \delta < 1$ and $\mu = \delta+\lambda\delta^{\beta}$ for some
constants $\lambda \in \mathbb{R}$ and $\beta > 0$. In the limit $\delta
\rightarrow 0^+$ the moduli \eqref{eq:dc} are that of a quasistatic
two-dimensional superlens (``poor man's superlens''). The question we
address in this paper is to determine those $\rho$ for which the power
dissipation in this superlens blows up as $\delta \rightarrow 0^+$. As
we shall explain shortly this is closely tied with cloaking due to
anomalous resonance. Curiously we will see that the answer depends on
the value of $\beta$, thus showing the sensitivity of the energy
dissipation rate to perturbations. 

We will say that $\lambda$ is
\emph{feasible} if 
\begin{equation}\label{eq:feasible_lambda}
    \lambda > 0  \quad \text{for} \ 0 < \beta < 1, \qquad 
    \lambda \ge -1 \quad \text{for} \ \beta = 1,  \quad \text{or} \quad 
    \lambda \ne 0 \quad \text{for} \ \beta > 1.
\end{equation}
We define $0 < \delta_{\mu}(\beta,\lambda) < 1$ such that $\mu \ge 0$
for $0 < \delta \le \delta_{\mu}$ (which is required physically
\cite{Milton:2005:PSQ} --- the restrictions we placed on $\lambda$
ensure that such a $\delta_{\mu}$ exists). Note that the materials to
the left and right of the slab are both vacuum if $\beta = 1$ and
$\lambda = -1$.    
\begin{figure}[!bt]
    \begin{center}
        \includegraphics{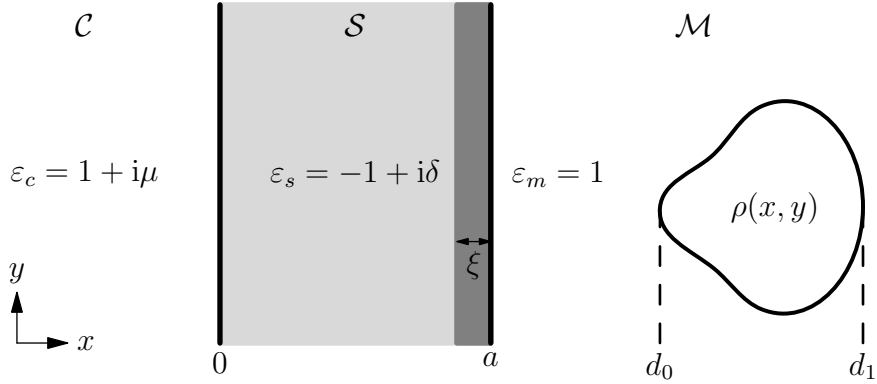}
    \end{center}
    \caption{We consider a slab geometry with a dielectric
    constant as illustrated in the figure --- the slab (shaded
    light gray) is in the region $\mathcal{S} = [0,a] \times
    (-\infty,+\infty)$. The charge density $\rho$ has compact
    support in the region $x > a$. For certain charge densities
    $\rho$ that are close enough to $a$, the energy dissipation
    in the slab (in particular in the darkly shaded region
    $a-\xi < x < a$) tends to infinity as a sequence $\delta_j$
    tends to 0.}
    \label{fig:slab}
\end{figure}
Given a charge density $\rho(x,y)\in \Pset$ with compact support in
$\mathcal{M}$, we define
\begin{equation}\label{eq:d0_d1}
    d_0 \equiv \min\{x : (x,y) \in \supp\rho\} \eqand 
    d_1 \equiv \max\{x : (x,y) \in \supp\rho\}
\end{equation}
(see Figure~\ref{fig:slab}).
Since $\rho$ has compact support in $\mathcal{M}$, we have
\begin{equation}\label{eq:h0_h1}
    \supp \rho \subseteq [d_0,d_1] \times [h_0,h_1]
\end{equation}
for some constants $h_0 < h_1$. In order to enforce charge conservation,
we require 
\begin{equation}\label{eq:zero_charge}
    \ds\int_{d_0}^{d_1}\ds\int_{h_0}^{h_1} \rho(x,y) \dd y \dd x = 0.
\end{equation}
The physical charge density is $\Re(\rho \ee^{-\ii \omega t})$ and the
physical time-harmonic electric field is given by $\mathbf{E} =
\Re\left(-\nabla V \ee^{-\ii \omega t}\right)$.  

We say anomalous localized resonance (ALR) occurs if the following two
properties hold as $\delta \rightarrow 0^+$ \cite{Milton:2005:PSQ}:
\begin{enumerate}
    \item $|V| \rightarrow \infty$ in certain localized regions with
          boundaries that are not defined by discontinuities in the
          relative permittivity and 
    \item $V$ approaches a smooth limit outside these localized regions. 
\end{enumerate}
For example, when $\rho$ is a dipole, $\varepsilon_c = \varepsilon_m =
1$, and when ALR occurs, as the loss in the lens (represented by
$\delta$) tends to zero the potential diverges and oscillates wildly in
regions that contain the boundaries of the lens. It is important to note
that the boundaries of the resonant regions move as the dipole is moved.
Outside the resonant regions the potential converges to what we expect
from perfect lensing \cite{Pendry:2000:NRM, Pendry:2002:NFL}. This
behavior and its relation to subwavelength resolution in imaging
(superlensing) were first discovered by Nicorovici, McPhedran, and
Milton \cite{Nicorovici:1994:ODP} and were analyzed in more depth by
Milton, Nicorovici, McPhedran, and Podolskiy \cite{Milton:2005:PSQ}.  

Milton, Nicorovici, McPhedran, and Podolskiy \cite{Milton:2005:PSQ}
showed that if $\rho$ is a dipole and $\varepsilon_c = \varepsilon_m =
1$, then ALR occurs if $a < d_0 < 2a$, where $d_0$ is the location of
the dipole. In this case there are two locally resonant strips --- one
centered on each face of the slab. As mentioned above, outside these
regions the potential converges to a smooth function that satisfies
mirroring properties of a perfect lens. In particular, to an observer
far enough to the right of the lens it will appear only as if there is a
dipole at $d_0$; to an observer far enough to the left of the lens it
will appear only as if there is a dipole located at $-d_0$ 
\cite{Milton:2005:PSQ}. In neither case can the observer determine
whether or not a lens is present. (However, if either observer is close
to the lens, the presence of the lens will be obvious due to the
resonance.) If $d_0 > 2a$, then there is no resonance and again the
potential converges to a smooth function that satisfies the mirroring
properties expected of a perfect lens. That is, to an observer far
enough to the right of the lens (beyond the dipole) it will appear as if
there is a dipole at $d_0$ and no lens, while to an observer to the left
of the lens it will appear as if there is a dipole at $d_0-a$ and no
lens \cite{Pendry:2000:NRM, Milton:2005:PSQ, Yan:2008:CSC}.

Cloaking due to ALR (CALR) can be understood from an energetic
perspective. First, consider the quantity
\begin{equation}\label{eq:intro_pd}
    E(\delta) \equiv \delta\int_{0}^{a}\int_{-\infty}^{\infty} 
        |\nabla V|^2 \dd y \dd x;
\end{equation}
$E(\delta)$ is proportional to the time-averaged electrical power
dissipated in the slab. Suppose $\rho$ is independent of $\delta$ such
that, in the limit $\delta \rightarrow 0^+$, we have $E(\delta)
\rightarrow \infty$ and $|V|/\sqrt{E(\delta)} \rightarrow 0$ for all
$(x,y) \in \mathbb{R}^2$ with $|x| > b$ for some $b > 0$. This blow-up
in the power dissipation is not physical, as it implies the fixed source
$\rho$ must produce an infinite amount of power in the limit $\delta
\rightarrow 0^+$ \cite{Milton:2006:CEA, Ammari:2013:STN}.
The power dissipation was proved to blow up as $\delta \rightarrow 0^+$
for finite collections of dipolar sources close enough to the slab by 
Milton et al.\ \cite{Milton:2005:PSQ, Milton:2006:CEA}; 
see also the work of Bergman \cite{Bergman:2014:PIP}.

To make sense out of this we rescale the source $\rho$ by defining
$\rho_r \equiv \rho/\sqrt{E(\delta)}$. Since \eqref{eq:V_pde} is linear,
the associated potential will be $V_r \equiv V/\sqrt{E(\delta)}$ and,
thanks to \eqref{eq:intro_pd}, the rescaled time-averaged electrical
power dissipation will be 
\[
    E_r(\delta) \equiv\delta\int_{0}^{a}\int_{-\infty}^{\infty} 
        |\nabla V_r|^2 \dd y \dd x =
        \delta\int_{0}^{a}\int_{-\infty}^{\infty} 
        \frac{|\nabla V|^2}{E(\delta)} \dd y \dd x = 1.
\] 
Thus the source $\rho_r$ produces constant power
independent of $\delta$. Also, the rescaled potential satisfies $|V_r| =
|V|/\sqrt{E(\delta)} \rightarrow 0$ as $\delta \rightarrow 0^+$ for $|x|
> b$, implying that the source $\rho_r$ becomes invisible in this limit
to observers beyond $|x| = b$. This idea was introduced by Milton and
Nicorovici \cite{Milton:2006:CEA}; also see the work by Kohn and 
Vogelius \cite{Kohn:2012:VPC} and the works by Ammari et al.\ 
\cite{Ammari:2013:STN, Ammari:2013:STNII}.

Cloaking due to anomalous localized resonance in the quasistatic regime
was first analyzed by Milton and Nicorovici \cite{Milton:2006:CEA}. They
used separation of variables and rigorous analytic estimates to prove
that if $\varepsilon_c = \varepsilon_m = 1$ and a fixed field is applied
to the system (e.g., a uniform field at infinity), then a polarizable
dipole located in the region $a < d_0 < 3a/2$ causes anomalous localized
resonance and is cloaked in the limit $\delta \rightarrow 0^+$; if
$\varepsilon_c \ne \varepsilon_m = 1$ (here $\varepsilon_c$ has no
relation to the value we chose in \eqref{eq:dc}), then the cloaking
region becomes $a < d_0 < 2a$.  

Milton and Nicorovici \cite{Milton:2006:CEA} also derived analogous
results for circular cylindrical lenses. In that case they assumed the
relative permittivity was $\varepsilon_c$ for $0 < r < r_c$,
$\varepsilon_s = -1 + \ii\delta$ for $r_c < r < r_s$, and $\varepsilon_m
= 1$ for $r_s < r$. With $r_0$ denoting the distance of the polarizable
dipole from the origin, the cloaking region was found to be $r_s < r_0 <
r_* = r_s^2/r_c$ if $\varepsilon_c \ne \varepsilon_m$ and $r_s < r_0 <
r_{\#} = \sqrt{r_s^3/r_c}$ if $\varepsilon_c = \varepsilon_m$. In
particular they proved that an arbitrary number of polarizable dipoles
within the cloaking region will be cloaked --- Nicorovici, Milton,
McPhedran, and Botten provided numerical verification of this result
\cite{Nicorovici:2007:QCT}. Milton and Nicorovici \cite{Milton:2006:CEA}
also extended their results to the finite-frequency and
three-dimensional cases for the Veselago slab lens
\cite{Veselago:1967:ESS} (where $\varepsilon_c = \varepsilon_m = 1$).

To summarize, suppose $\varepsilon_c = \varepsilon_m = 1$ and the
polarizable dipole is absent and a uniform electric field at infinity is
applied to the slab lens configuration. The lens will not perturb this
external field in the limit $\delta \rightarrow 0^+$, and, hence, is
invisible to external observers \cite{Nicorovici:1994:ODP,
Milton:2005:PSQ}. When the polarizable dipole is placed in this uniform
field but outside of the cloaking region (so $d_0 > 3a/2$), it will
become polarized and create a dipole field of its own which interacts
with the lens. If $d_0 > 2a$ as well there will be no resonance in the
limit $\delta \rightarrow 0^+$; to an external observer, the lens will
be invisible but the dipole will be clearly visible in this limit. If
$3a/2 < d_0 < 2a$, resonance will occur as $\delta \rightarrow 0^+$ but
it will be localized to strips around the boundaries of the lens --- in
particular the resonant fields will not interact with the dipole. The
dipole will still be visible in this limit but to an observer outside of
the resonance region (and outside the lens) the lens will be invisible.
Finally, if $d_0 < 3a/2$ (so the polarizable dipole is within the
cloaking region), the resonant field will interact with the polarizable
dipole and effectively cancel the effect of the external field on it. In
other words, the net field at the location of the polarizable dipole
will be zero, and, hence, its induced dipole moment will be zero (in the
limit as $\delta \rightarrow 0^+$) --- both the lens and the dipole will
be invisible to external observers. See Figure 3 in the paper by Milton
and Nicorovici \cite{Milton:2006:CEA} and the figures in the work by
Nicorovici, Milton, McPhedran, and Botten \cite{Nicorovici:2007:QCT} for
dramatic illustrations of this in the circular cylindrical case.

Nicorovici, McPhedran, Enoch, and Tayeb studied CALR for the circular
cylindrical superlens in the finite-frequency case
\cite{Nicorovici:2008:FWC}. For physically plausible values of $\delta$
they discovered that the cloaking device (the superlens) can effectively
cloak a tiny cylindrical inclusion located within the cloaking region
but that the superlens does not necessarily cloak itself --- they deemed
this phenomenon the ``ostrich effect.'' In the quasistatic (long
wavelength) limit, however, the lens can effectively cloak both the
inclusion and itself even at rather large values of $\delta$, which was
also pointed out in the case of a polarizable dipole
\cite{Milton:2006:CEA}.  

Bouchitt\'e and Schweizer \cite{Bouchitte:2010:CSO} considered an
annular lens with inner and outer radii of 1 and $R$, respectively, and
relative permittivity $\varepsilon_s = -1 + \ii \delta$ embedded in
vacuum. They proved that a small circular inclusion of radius
$\gamma(\delta)$ (with $\gamma(\delta) \rightarrow 0$ as $\delta
\rightarrow 0^+$) is cloaked in the limit $\delta \rightarrow 0^+$ if it
is located within the annulus $R < |x_0| < R_* = R^{3/2}$, where $x_0$
is the position of the circular source. If $|x_0| > R_*$, then the
source is visible but the annular superlens is not. Both of these
results are consistent with the results of Milton and Nicorovici
\cite{Milton:2006:CEA}. Bruno and Lintner \cite{Bruno:2007:SCS}
considered a similar scenario, where they showed numerically that a
small dielectric disk is not perfectly cloaked. They verified
(numerically) that an annular superlens embedded in vacuum by itself is
invisible to an external applied field in the zero loss limit (assuming
the source is at a position further than $R_*$ from the origin) --- a
fact that was first shown analytically by Nicorovici, McPhedran, and
Milton \cite{Nicorovici:1994:ODP}; however, they also showed that
elliptical superlenses can cloak polarizable dipoles that are near
enough to the lens but that such lenses are not invisible themselves.
That is, the polarizable dipole is cloaked but it is obvious to external
observers that something is being hidden --- this is another example of
the ``ostrich effect'' introduced by Nicorovici et al.\
\cite{Nicorovici:2008:FWC}.  

Kohn, Lu, Schweizer, and Weinstein used variational principles to derive
resonance results in the quasistatic regime in core/shell geometries
(where the superlens resides in the shell) that are not necessarily
radial \cite{Kohn:2012:VPC}. They assumed the source was supported on
the boundary of a disk in $\mathbb{R}^2$, and obtained results similar
to those described above.  

Ammari, Ciraolo, Kang, Lee, and Milton \cite{Ammari:2013:STN,
Ammari:2013:STNII} used properties of certain Neumann--Poincar\'e
operators to prove results analogous to those of Milton and Nicorovici
\cite{Milton:2006:CEA}. The most general results they derived hold for
very general core/shell geometries and charge density distributions
$\rho$ with compact support in the quasistatic regime. In the circular
cylindrical case their requirements are more explicit and involve gap
conditions on the Fourier coefficients of the Newtonian potential of
$\rho$. Although these gap conditions may be difficult to deal with for
a given source, they verified that their results are consistent with
those of Milton and Nicorovici \cite{Milton:2006:CEA} when $\rho$ is a
dipole or quadrupole. Their results can be summarized as follows. First,
if the support of $\rho$ is completely contained within the cloaking
region ($r_s < r_0 < r_*$ if $\varepsilon_c \ne \varepsilon_m = 1$ and
$r_s < r_0 < r_{\#}$ if $\varepsilon_c = \varepsilon_m = 1$), and if
$\rho$ satisfies the gap property, then CALR occurs. Second, weak CALR
(defined by $\limsup_{\delta \rightarrow 0^+} E(\delta) = \infty$ and
$|V| < C$ for all $\delta$ where $C>0$ is independent of $\delta$)
occurs if the support of $\rho$ is completely inside the cloaking region
and the Newtonian potential does not extend harmonically to all of
$\mathbb{R}^2$. Third, if $\Re(\varepsilon_s) \ne -1$, then CALR does
not occur. Fourth, CALR does not occur for any isotropic constant values
of $\varepsilon_c$ and $\varepsilon_s$ when the core and shell are
concentric spheres in $\mathbb{R}^3$. Using a folded geometry approach
(extending that of Leonhardt and Philbin \cite{Leonhardt:2006:GRE} and
Leonhardt and Tyc \cite{Leonhardt:2009:BIN}), Ammari, Ciraolo, Kang,
Lee, and Milton \cite{Ammari:2013:ALR} proved that CALR can occur in 3D
when the core and shell are concentric spheres and the shell has a
certain anisotropic relative permittivity --- see the work of Milton,
Nicorovici, McPhedran, Cherednichenko, and Jacob \cite{Milton:2008:SFG}
for the analogous problem in 2D.  

Nicorovici, McPhedran, Botten, and Milton \cite{Nicorovici:2009:CPR}
asked whether or not one can enlarge the cloaking region by spatially
overlapping the cloaking regions of identical circular cylindrical
superlenses. Curiously they found that doing so \emph{reduces} the
cloaking effect (at least in the quasistatic regime). The cloaking
region can be extended by arranging the disks in such a way that their
corresponding cloaking regions just touch.

Milton, Nicorovici, and McPhedran \cite{Milton:2007:OPL} utilized a
correspondence (first discovered although not fully exploited by
Yaghjian and Hansen \cite{Yaghjian:2006:PWS}) between the perfect
Veselago lens at a fixed frequency in the long-time limit and the lossy
Veselago lens in the quasistatic limit to show that transverse magnetic
dipole sources that generate bounded power eventually become cloaked if
they are within the cloaking region ($a < d_0 < 3a/2$). Xiao, Huang,
Dong, and Chan obtained similar results in the case when both the
permittivity and permeability of the Veselago lens have a positive
imaginary part \cite{Xiao:2012:TEC}.  

Finally, Nguyen proved that arbitrary inhomogeneous objects are
magnified by properly constructed superlenses in both the quasistatic
and finite-frequency regimes in two and three dimensions
\cite{Nguyen:2013:SUC}.  

In this paper we consider the scenario sketched in Figure~\ref{fig:slab}
and described by \eqref{eq:V_pde}--\eqref{eq:zero_charge}. We study the
behavior of
\begin{equation*}%\label{eq:first_pd}
    E_{\xi}(\delta) \equiv \delta\int_{a-\xi}^{a}\int_{-\infty}^{\infty} 
        |\nabla V|^2 \dd y \dd x,
\end{equation*}
where $0 < \xi < a$ is a small parameter. The quantity $E_{\xi}(\delta)$
is proportional to the time-averaged electrical power dissipated in the
strip $R_{\xi} \equiv \{(x,y) \in \mathbb{R}^2 : a-\xi < x < a\}$,
illustrated by the darkened strip in Figure~\ref{fig:slab};
$E_{\xi}(\delta)$ is also a lower bound on the quantity defined in
\eqref{eq:intro_pd}. In particular, we derive conditions on $\rho$ that
determine whether or not $\limsup_{\delta \rightarrow 0^+}
E_{\xi}(\delta) = \infty$ (\emph{weak} CALR), $\lim_{\delta \rightarrow
0^+}E_{\xi}(\delta) = \infty$ (\emph{strong} CALR), or $E_{\xi}(\delta)
< C$ for a constant $C > 0$ as $\delta \rightarrow 0^+$ (no CALR).

In order to do this, we begin by taking the Fourier transform of
\eqref{eq:V_pde} in the $y$\nobreakdash-variable and calculating
$E_{\xi}(\delta)$ explicitly in terms of $\rhohat(x,k)$ (the Fourier
transform of $\rho$ in the $y$\nobreakdash-variable). We then derive
upper and lower bounds on $E_{\xi}(\delta)$ to obtain our results. The
result for unbounded energy is contained in
Corollary~\ref{cor:estimate}. Essentially, if there is a $d_* \in
[d_0,d_1]$ such that
\[
    \ds\limsup_{k \rightarrow \infty}\left| \ee^{d_*k}\int_{d_0}^{d_1}
        \rhohat(x,k)\ee^{-kx}\dd x\right| > 0
\]
and $a < d_* < \tau(\beta)a$, where
\[
	\tau(\beta) = 
	\begin{cases}
		\dfrac{\beta+2}{\beta+1} &\text{for } 0 < \beta < 1, \myspace
		\dfrac{3}{2} &\text{for } \beta \ge 1,
	\end{cases}
\] 
then $\limsup_{\delta \rightarrow 0^+} E_{\xi}(\delta) = \infty$. As far
as we are aware, there are two novelties to our result. First, the
blow-up in energy occurs only if $\rho$ is within a critical distance of
the slab that depends non-trivially on $\beta$. Second, unlike in
Theorem 5.3 of the work by Ammari et al.\ \cite{Ammari:2013:STN} and
Theorem 4.1 in the subsequent work of Ammari et al.\
\cite{Ammari:2013:STNII}, we do not assume that the support of $\rho$ is
completely contained within the critical distance. In fact, there are
examples of charge density distributions $\rho$ that cause a blow-up in
energy if only part of the support of $\rho$ is within the critical
distance --- see
Sections~\ref{subsubsec:rectangle}~and~\ref{subsubsec:circle}. (It seems
the results of Ammari et al.\ \cite{Ammari:2013:STN, Ammari:2013:STNII}
would hold even if only part of the support of $\rho$ is within the
critical distance to the lens --- see the Introduction in their later
work \cite{Ammari:2013:STNII}.) In Theorem~\ref{thm:bounded} we show
that $\lim_{\delta \rightarrow 0^+} E_{\xi}(\delta) = 0$ if $\rho$ is
supported outside the critical distance.  

The remainder of this paper is organized as follows. In
Section~\ref{sec:derivation_V} we derive an expression for the
potential. In Section~\ref{sec:derivation_pd} we compute the power
dissipation $E_{\xi}(\delta)$. In
Section~\ref{sec:preliminary_estimates} we obtain some lower bounds that
are used to prove our result about the blow-up of $E_{\xi}(\delta)$ as
$\delta \rightarrow 0^+$. We then analytically and numerically
illustrate our results for two charge density distributions. In
Section~\ref{sec:bounded_pd} we prove that $E_{\xi}(\delta)$ remains
bounded (and, in fact, goes to 0) as $\delta \rightarrow 0^+$ if $\rho$
is farther than the critical distance from the slab. Finally, in
Section~\ref{sec:bounded_potential} we show that the potential remains
bounded far enough away from the slab in the limit as $\delta\rightarrow
0^+$ regardless of the position of the source.    

%%%%%%%%%%%%%%%%%%%%%%%%%%%%%%%%%%%%%%%%%%%%%%%%%%%%%%%%%%%%%%%%%%%%%%%%
%%%%%%%%%%%%%%%%%%%%%%%%%%%%%%%%%%%%%%%%%%%%%%%%%%%%%%%%%%%%%%%%%%%%%%%%

\section{Derivation of the Potential}\label{sec:derivation_V}

The potential $V \in
L^2_{\mathrm{loc}}(\mathbb{R}^2)$ solves the following problem in 
the quasistatic regime:
\begin{equation}\label{eq:V_pde_full}
    \left\{
        \begin{aligned}
            &- \nabla \cdot [\varepsilon(x,y)\nabla V(x,y)] = \rho(x,y) 
                \quad \text{in} \ \mathbb{R}^2, \\
            &V(x,y), \, \varepsilon\dfrac{\partial V}{\partial x}(x,y) 
                \quad \text{continuous across} \ x = 0, a \ 
                \text{for almost every} \ y \in \mathbb{R}, \\
            &\dfrac{\partial V}{\partial x}(x,y) \rightarrow 0 
                \quad \text{as} \ |x| \rightarrow \infty 
                \ \text{for almost every} \ y \in \mathbb{R}, \\
            &V(x,\cdot) \in H^1(\mathbb{R}) \quad 
                \text{for almost every} \ x \in \mathbb{R}, \\
            &\dfrac{\partial V}{\partial x}(x,\cdot) \in L^2(\mathbb{R}) 
                \quad\text{for almost every} \ x \in \mathbb{R},         
        \end{aligned}
    \right.
\end{equation}
where $\varepsilon$ is given in \eqref{eq:dc}. In this section, we will
take the Fourier transform with respect to the $y$\nobreakdash -variable
of the problem \eqref{eq:V_pde_full}. Since $V \in
L^2_{\mathrm{loc}}(\mathbb{R}^2)$, the PDE \eqref{eq:V_pde_full} can be
understood in a distributional sense (since $L^2_{\mathrm{loc}}$
functions are distributions \cite{Friedlander:1998:ITD}). The continuity
conditions in \eqref{eq:V_pde_full} ensure continuity of the potential
and the normal component of the electric displacement field $\mathbf{D}
= -\varepsilon \nabla V$ across the left and right edges of the slab.
These continuity conditions are typical in quasistatic problems --- see,
e.g., Section~4.4.2 in the book by Griffiths \cite{Griffiths:1999:ITE}
and the work by Milton et al.\ \cite{Milton:2005:PSQ}. The condition at
infinity in \eqref{eq:V_pde_full} ensures that the $x$\nobreakdash
-component of the electric field, namely $-\partial V/\partial x$,
vanishes as $x \rightarrow \pm\infty$. It turns
out that this condition is sufficient for our purposes (for the problem
stated in \eqref{eq:V_pde_full} one can show that the $y$\nobreakdash
-component of the electric field, namely $-\partial V/\partial y$, goes
to $0$ as $|x| \rightarrow \infty$ as well). We only consider $|x|
\rightarrow \infty$, rather than $x^2 + y^2 \rightarrow \infty$, since
the slab extends infinitely in the $y$\nobreakdash-direction. The last
two requirements are regularity results that we impose to ensure that we
can perform the computations in this section. 

In the remainder of this section, we sketch a proof of the following
theorem; a complete proof can be found in work by one of the authors of 
this paper \cite{Thaler:2014:BVI}.  
\begin{theorem}\label{thm:existence_V}
    There exists a nonempty class of potentials 
    \begin{equation}\label{eq:class}
        \mathcal{V} \equiv \{V \in L^2_{\mathrm{loc}}(\mathbb{R}^2) : 
            V \ \text{satisfies} \ \eqref{eq:V_pde_full}\}.
    \end{equation}
\end{theorem}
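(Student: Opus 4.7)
The plan is to construct $V$ explicitly via the partial Fourier transform in $y$, solve a piecewise ordinary differential equation in $x$ for each wavenumber $k$, and then verify the resulting function lies in $L^{2}_{\mathrm{loc}}(\mathbb{R}^{2})$ and meets every condition in \eqref{eq:V_pde_full}. The first step is to Fourier transform in $y$. Since $\varepsilon$ depends only on $x$, the PDE becomes, for each fixed $k$,
\[
    -\frac{d}{dx}\!\left[\varepsilon(x)\,\frac{d\Vhat}{dx}(x,k)\right] + k^{2}\varepsilon(x)\Vhat(x,k) = \rhohat(x,k),
\]
and the continuity conditions on $V$ and $\varepsilon\,\partial_{x}V$ at $x=0,a$ pass directly to $\Vhat$ and $\varepsilon\,\partial_{x}\Vhat$.

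Next, because $\varepsilon$ is piecewise constant and $\rho$ is supported in $\mathcal{M}$, for every $k\ne 0$ the homogeneous equation $\Vhat_{xx}-k^{2}\Vhat=0$ holds in $\mathcal{C}$ and in $\mathcal{S}$, while in $\mathcal{M}$ we have $\Vhat_{xx}-k^{2}\Vhat=-\rhohat$. I would write the general solution in each region as a combination of $e^{\pm|k|x}$, plus, in $\mathcal{M}$, a particular solution obtained by convolving $\rhohat(\cdot,k)$ with the one-dimensional Green's function $G(x;k)=-\tfrac{1}{2|k|}e^{-|k||x|}$. The far-field condition $\partial_{x}V\to 0$ eliminates the growing exponential in the outer regions, and the four matching conditions at $x=0,a$ reduce to a $4\times 4$ linear system for the remaining coefficients. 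Its determinant factors through $(\varepsilon_{s}+\varepsilon_{c})$ and $(\varepsilon_{s}+\varepsilon_{m})$ together with an $e^{-2|k|a}$ correction; because $\delta>0$ forces $\Im\varepsilon_{s}>0$, the determinant is nonzero for every $k\ne 0$, so $\Vhat(x,k)$ is uniquely determined.

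To recover $V$, I would invert the Fourier transform in $y$. Because $\rho\in\Pset$ has compact support, Paley--Wiener together with Plancherel gives that $\rhohat(x,\cdot)$ is entire in $k$, lies in $L^{2}\cap L^{\infty}$, and decays faster than any polynomial on the real line. These properties propagate through the explicit formula to $\Vhat$ and $k\,\Vhat$, which lie in $L^{2}(\mathbb{R})$ for almost every $x$; hence $V(x,\cdot)\in H^{1}(\mathbb{R})$ and $\partial_{x}V(x,\cdot)\in L^{2}(\mathbb{R})$ for a.e.\ $x$. The exponential decay of $\Vhat$ in $x$ inside $\mathcal{C}$ and (away from $\supp\rho$) in $\mathcal{M}$ then yields $V\in L^{2}_{\mathrm{loc}}(\mathbb{R}^{2})$ along with the vanishing of $\partial_{x}V$ as $|x|\to\infty$.

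The main obstacle I expect is the behavior at $k=0$, where the exponentials $e^{\pm|k|x}$ degenerate, the Green's function carries a $1/|k|$ factor, and the matching determinant vanishes, so a naive formula would have a nonintegrable singularity. The charge-conservation hypothesis \eqref{eq:zero_charge} is precisely what rescues the construction: it forces $\int\rhohat(x,0)\dd x = 0$, which together with the smoothness of $\rhohat$ in $k$ cancels the $1/|k|$ and keeps every coefficient bounded (indeed continuous) as $k\to 0$. With this cancellation secured, the inverse Fourier transform is well defined and verification of \eqref{eq:V_pde_full} is routine; the complete bookkeeping is carried out in \cite{Thaler:2014:BVI}.
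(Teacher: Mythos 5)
Your construction follows essentially the same route as the paper's sketch: Fourier transform in $y$, solve the resulting piecewise constant-coefficient ODE in $x$ layer by layer, impose decay at $x\to\pm\infty$, match $\Vhat$ and $\varepsilon\,\partial_x\Vhat$ at $x=0,a$, and resolve the $k=0$ degeneracy using the charge-conservation condition \eqref{eq:zero_charge}. The organizational differences --- you solve the inhomogeneous ODE in $\mathcal{M}$ with the one-dimensional Green's function and assemble a $4\times 4$ matching system, whereas the paper propagates a single coefficient $A_k$ from $\mathcal{C}$ rightward and solves the ODE in $\mathcal{M}$ by Laplace transform --- are cosmetic, and your observation that the determinant is bounded away from zero for $k\ne 0$ because $\Im\varepsilon_s=\delta>0$ matches the role of the lower bound \eqref{eq:lower_bound_g} on $|g|$.

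One step of your justification is overstated and should be replaced. You invoke Paley--Wiener to claim that $\rhohat(x,\cdot)$ ``decays faster than any polynomial on the real line.'' Compact support of $\rho(x,\cdot)$ gives that $\rhohat(x,\cdot)$ is entire of exponential type and is in $L^2(\mathbb{R})$ (by Plancherel) and $L^\infty(\mathbb{R})$ (since $\rho(x,\cdot)\in L^1$), but rapid polynomial decay on the real axis would require smoothness of $\rho(x,\cdot)$, and the class $\Pset$ in \eqref{eq:P_def} only assumes $L^2\cap L^\infty$ with compact support. Fortunately the conclusion you want does not depend on that decay: the $L^2$-integrability in $k$ of $\Vhat$ and of $k\Vhat$ comes from the exponential damping already present in the solution formula --- the factor $\ee^{-|k|d_0}$ absorbed into $I_k$ via the estimate $|I_k|^2\le(d_1-d_0)\|\rho\|^2_{L^2}\ee^{-2|k|d_0}$ of Lemma~\ref{lem:Ik_properties}, and, inside $\supp\rho$, the $O(|k|^{-2})$ bound on the Green's-function convolution $\frac{1}{2|k|}\int \ee^{-|k||x-x'|}\rhohat(x',k)\,\dd x'$ obtained from $\|\rhohat(\cdot,k)\|_{L^\infty}\le\|\rho\|_{L^1}$. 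Substituting these elementary bounds for the Paley--Wiener step closes the argument.
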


We recall the following definitions:
\begin{equation}\label{eq:Rs}
    \left\{\begin{aligned}
        \mathcal{C}&\equiv\{(x,y) \in \mathbb{R}^2 : x < 0\}; \\ 
        \mathring{\mathcal{S}} &\equiv\{(x,y) \in \mathbb{R}^2 : 
            0 < x < a\}; \\ 
        \mathcal{M}&\equiv\{(x,y) \in \mathbb{R}^2 : a < x\}.
    \end{aligned}\right.
\end{equation}
We then define
\begin{equation}\label{eq:def_Vs}
    \left\{\begin{aligned}
        V_c(x,y) &\equiv \chi_{\mathcal{C}}(x,y) V(x,y), \\
        V_s(x,y) &\equiv \chi_{\mathring{\mathcal{S}}}(x,y) V(x,y), \\
        V_m(x,y) &\equiv \chi_{\mathcal{M}}(x,y) V(x,y),
    \end{aligned}\right.
\end{equation}
where 
\begin{equation}\label{eq:def_chi}
    \chi_{U}(x,y) = 
    \begin{cases} 
        1 & \text{if } (x,y) \in U, \\ 
        0 & \text{if } (x,y) \not\in U, 
    \end{cases}
\end{equation}
is the characteristic function of the set $U \subset \mathbb{R}^2$.
Finally, we use the convention that the Fourier transform of a function
$f(x,y)$ with respect to the variable $y$ is defined by
\begin{equation}\label{eq:FT}
    \widehat{f}(x,k) \equiv \int_{-\infty}^{\infty} f(x,y) \ee^{-\ii ky} 
        \dd y.
\end{equation}
If $f$ is a distribution, it is well known \cite{Friedlander:1998:ITD} 
that
\begin{equation}\label{eq:FT_properties}
    \widehat{\frac{\partial f}{\partial x}}(x,k) 
        = \frac{\partial \widehat{f}}{\partial x}(x,k)
    \eqand
    \widehat{\frac{\partial f}{\partial y}}(x,k) =\ii k\widehat{f}(x,k).
\end{equation}

We apply the Fourier transform with respect to $y$ in \eqref{eq:V_pde}
and by straightforward calculations find that the
general form of the Fourier transform of $V_c$ is
\begin{equation}\label{eq:Vc_hat}
    \Vhat_c(x,k) = A_k\ee^{|k|x}
\end{equation}
for arbitrary constants $A_k$.

The continuity conditions at the left boundary of the central slab,
i.e., at $x=0$, together with some algebraic manipulations lead us to
the general form of the Fourier transform of $V_s$, namely
\begin{equation}\label{eq:Vs_hat}
    \Vhat_s(x,k) = \dfrac{A_k}{2\chi_c}\left[(\chi_c+1)\ee^{|k|x} + 
        (\chi_c-1)\ee^{-|k|x}\right],
\end{equation}
where
\begin{equation}\label{eq:chic}
    \chi_c \equiv \varepsilon_s/\varepsilon_c.
\end{equation}

Next we will show the details of the derivation for the solution in the
third layer, $\mathcal{M}$. From \eqref{eq:V_pde_full} we note that in
the set $\mathcal{M}$ the potential satisfies
\begin{equation*}%\label{eq:Vm_pde}
    \begin{cases} 
        \Delta V_m(x,y) = -\rho(x,y) & \text{for } x > a, \\
        \ds\lim_{x\rightarrow a^+}V_m(x,y) 
            = \ds\lim_{x\rightarrow a^-}V_s(x,y)
            &\text{for almost every} \ y \in \mathbb{R}, \\
        \ds\lim_{x\rightarrow a^+}\varepsilon_m
            \dfrac{\partial V_m}{\partial x}(x,y) 
            = \ds\lim_{x\rightarrow a^-}
            \varepsilon_s\dfrac{\partial V_s}{\partial x}(x,y) 
            &\text{for almost every} \ y \in \mathbb{R}, \\
        \ds\lim_{x\rightarrow \infty}
            \dfrac{\partial V_m}{\partial x}(x,y) = 0 
            &\text{for almost every} \ y \in \mathbb{R}.
    \end{cases} 
\end{equation*}
After taking the Fourier transform with respect to $y$ we find that
$\Vhat_m(x,k)$ satisfies
\begin{equation}\label{eq:Vm_hat_pde}
    \begin{cases} 
        \dfrac{\partial^2\Vhat_m}{\partial x^2}(x,k) - k^2 \Vhat(x,k) 
            = -\rhohat(x,k) & \text{for} \ x > a, \\
        \ds\lim_{x\rightarrow a^+}\Vhat_m(x,k) 
            = \ds\lim_{x\rightarrow a^-}\Vhat_s(x,k) &\text{for all} \ k 
            \in \mathbb{R}, \\
        \ds\lim_{x\rightarrow a^+}\varepsilon_m
            \dfrac{\partial \Vhat_m}{\partial x}(x,k) 
            = \ds\lim_{x\rightarrow a^-}\varepsilon_s
            \dfrac{\partial \Vhat_s}{\partial x}(x,k)
            &\text{for all} \ k \in \mathbb{R}, \\
        \ds\lim_{x\rightarrow \infty}
            \dfrac{\partial \Vhat_m}{\partial x}(x,k) 
            = 0 &\text{for all} \ k \in \mathbb{R}.
    \end{cases} 
\end{equation}

We make the change of variables $z = x-a$ so that
\eqref{eq:Vm_hat_pde} becomes
\begin{equation}\label{eq:Vm_hat_cov}
    \begin{cases} 
        \dfrac{\partial^2\underline{\Vhat_m}}{\partial z^2}(z,k) - 
            k^2 \underline{\Vhat_m}(z,k) = -\underline{\rhohat}(z,k) & 
            \text{for} \ z > 0, \\ 
        \ds\lim_{z \rightarrow 0^+} \underline{\Vhat_m}(z,k) 
            = \ds\lim_{z \rightarrow 0^-} \underline{\Vhat_s}(z,k) 
            = A_k\psi_k^+ 
            &\text{for all} \ k \in \mathbb{R}, \\
        \ds\lim_{z\rightarrow 0^+} \dfrac{\partial 
            \underline{\Vhat_m}}{\partial z}(z,k) 
            = \displaystyle\lim_{z\rightarrow 0^-}\chi_m 
            \dfrac{\partial \underline{\Vhat_s}}{\partial z}(z,k) 
            = A_k\psi_k^- 
            &\text{for all} \ k \in \mathbb{R}, 
    \end{cases} 
\end{equation}
where $\widehat{\rho}(x,k) = \underline{\widehat{\rho}}(x-a,k)$;
$\widehat{V_j}(x,k) = \underline{\widehat{V}_j}(x-a,k)$ for $j = m$,
$s$; 
\begin{align}
    \psi_k^+ &= \dfrac{1}{2\chi_c}
        \left[\left(\chi_c+1\right)\ee^{|k|a}
        + \left(\chi_c-1\right)\ee^{-|k|a}\right]; \label{eq:psiplus} \\
    \psi_k^-  &= \dfrac{|k|\chi_m}{2\chi_c}
        \left[\left(\chi_c+1\right)\ee^{|k|a}
        - \left(\chi_c-1\right)\ee^{-|k|a}\right]; \label{eq:psiminus}\\
    \chi_m &= \varepsilon_s/\varepsilon_m. \nonumber 
\end{align}
(We have eliminated the condition at infinity for now --- we will return
to it later.)  

The Laplace transform of $\underline{\Vhat_m}(z,k)$ is defined by  
\begin{equation}\label{eq:def_Laplace}
    u(s,k) \equiv \int_{0}^{\infty} \underline{\Vhat_m}(z,k) \ee^{-sz} 
        \dd z;
\end{equation}
see, e.g., the book by Schiff \cite{Schiff:1999:TLT}.
We need to solve the ODE in \eqref{eq:Vm_hat_cov} for the cases $k = 0$
and $k \ne 0$ separately.  

%%%%%%%%%%%%%%%%%%%%%%%%%%%%%%%%%%%%%%%%%%%%%%%%%%%%%%%%%%%%%%%%%%%%%%%%

\vspace{0.2cm}
\noindent{\bf Case 1:} $k = 0$

\vspace{0.2cm}
Here the Laplace-transformed version of \eqref{eq:Vm_hat_cov} is 
\begin{equation*}
    s^2 u(s,0) - sA_0\psi_0^+ - A_0\psi_0^- 
        = -\mathcal{L}
        \left\{\underline{\widehat{\rho}}(z,0)\right\}(s,0),
\end{equation*}
where $\mathcal{L}\{g\}$ denotes the Laplace transform of the function
$g$ --- see \eqref{eq:def_Laplace}. Thus 
\begin{equation*}
    u(s,0) = \dfrac{A_0}{s} 
        - \left[\mathcal{L}
        \left\{\widehat{\rho}(z,0)\right\}(s,0)\right]\cdot
        \dfrac{1}{s^2}
\end{equation*}
where we have used \eqref{eq:psiplus} and \eqref{eq:psiminus} to
simplify the expression for $u(s,0)$. Since $\underline{\Vhat_m} = 0$
for $z < 0$ (see \eqref{eq:Rs}--\eqref{eq:def_chi}), we can use the
convolution theorem for Laplace transforms \cite{Schiff:1999:TLT}
to find
\begin{equation*}
    \underline{\Vhat_m}(z,0) = A_0 
        - \int_{0}^{z} (z-z')\underline{\widehat{\rho}}(z',0)\dd z' 
        \Rightarrow\Vhat_m(x,0) = A_0 - \ds\int_{0}^{x-a} 
        (x-a-z')\underline{\rhohat}(z',0)\dd z'.
\end{equation*}
Next we make the change of variables $z' = x'-a$ in the above integral
to find 
\begin{equation*}%\label{eq:Vm_hat_0}
    \Vhat_m(x,0) = A_0 
        - \ds\int_{a}^{x} (x-x')\underline{\widehat{\rho}}(x'-a,0)\dd x' 
        = A_0 + \ds\int_{a}^{x} (x'-x)\widehat{\rho}(x',0)\dd x'.
\end{equation*}

We now impose the condition as $x\rightarrow \infty$; see
\eqref{eq:Vm_hat_pde}. We need to require
\[
    \lim_{x\rightarrow \infty}\dfrac{\partial \Vhat_m}{\partial x}(x,0) 
    = \ds\lim_{x\rightarrow \infty}\left\{\dfrac{\partial}{\partial x}
    \left[A_0 + \int_{a}^{x}(s-x)\widehat{\rho}(s,0) \dd s\right]
    \right\} = 0.
\]
By the Leibniz Rule \cite{Kaplan:1984:AC, Thaler:2014:BVI},
this is equivalent to the requirement
\begin{equation*}
    \ds\lim_{x\rightarrow \infty} \left[-\ds\int_{a}^{x}
        \widehat{\rho}(s,0) \dd s \right] = 0.
\end{equation*}
For $x > d_1$, by \eqref{eq:zero_charge} we have 
\begin{equation*}
    \int_{a}^{x}\widehat{\rho}(s,0) \dd s 
        = \int_{d_0}^{d_1} \widehat{\rho}(s,0)\dd s 
        = \int_{d_0}^{d_1}\int_{-\infty}^{\infty} 
            \rho(s,y) \dd y\dd s
        = \int_{d_0}^{d_1}\int_{h_0}^{h_1} 
            \rho(s,y) \dd y\dd s
        = 0.
\end{equation*}
Thus the condition at infinity is automatically satisfied for any choice
of $A_0$. (Throughout this section, we have assumed that $\rhohat(x,k)$
is continuous at $k = 0$ --- in fact, in Lemma~\ref{lem:Ik_properties}
we will see that $\rhohat(x,k)$ is infinitely differentiable on
$\mathbb{R}$ as a function of $k$ for almost all $x \in \mathbb{R}$.)  

\vspace{0.2cm}
\noindent{\bf Case 2:} $k \ne 0$
\vspace{0.2cm}

Here the Laplace-transformed version of \eqref{eq:Vm_hat_cov} is 
\[
    s^2u(s,k) - sA_k\psi_k^+ - A_k\psi_k^- - k^2u(s,k) 
        = -\mathcal{L}
            \left\{\underline{\widehat{\rho}}(z,k)\right\}(s,k).
\]
Therefore
\begin{equation*}
    u(s,k) = A_k\psi_k^+\dfrac{s}{s^2-k^2}+A_k\psi_k^-\dfrac{1}{s^2-k^2} 
        -\dfrac{\mathcal{L}\left\{\underline{\widehat{\rho}}(z,k)
        \right\}(s,k)}{s^2-k^2}.
\end{equation*}
Recalling that $\underline{\Vhat_m}(z,k) = 0$ for $z < 0$ (see
\eqref{eq:Rs}--\eqref{eq:def_chi}), by the convolution theorem for
Laplace transforms we have
\begin{equation*}
    \underline{\Vhat_m}(z,k) = A_k\psi_k^+\cosh\left(|k|z\right)
        + A_k\psi_k^-\dfrac{\sinh\left(|k|z\right)}{|k|}
        - \int_{0}^{z}\dfrac{\sinh\left[|k|(z-z')\right]}{|k|}
        \underline{\rhohat}(z',k) \dd z'.
\end{equation*}
This is equivalent to
\begin{align*}
    \Vhat_m(x,k) = &A_k\psi_k^+\cosh\left[|k|(x-a)\right] 
        + A_k\psi_k^-\dfrac{\sinh\left[|k|(x-a)\right]}{|k|}\\*
        & - \ds\int_{0}^{x-a}
        \dfrac{\sinh\left[|k|(x-a-z')\right]}{|k|}
        \underline{\rhohat}(z',k) \dd z'.
\end{align*}
We make the change of variables $z' = x'-a$ in the above integral to
find
\begin{equation}\label{eq:Vm_hat}
    \begin{aligned}
        \Vhat_m(x,k) = &A_k\psi_k^+\cosh\left[|k|(x-a)\right] 
            + \dfrac{A_k\psi_k^-}{|k|}\sinh\left[|k|(x-a)\right] \\*
            &+ \dfrac{1}{|k|}\ds\int_{a}^{x}\sinh\left[|k|(x'-x)\right]
            \widehat{\rho}(x',k) \dd x',
    \end{aligned}
\end{equation}
where we have used the fact that $\underline{\rhohat}(x-a,k) =
\rhohat(x,k)$.

We now impose the limit condition at infinity --- see
\eqref{eq:Vm_hat_pde}. We use the Leibniz Rule to find
\begin{align*}
    &\begin{aligned}
    \lim_{x \rightarrow \infty} 
        \dfrac{\partial \Vhat_m}{\partial x}(x,k) \,
    = &\ds\lim_{x \rightarrow \infty}\Bigg( A_k\left\{|k|\psi_k^+
        \sinh\left[|k|(x-a)\right]+\psi_k^-\cosh\left[|k|(x-a)\right]   
        \right\}\\*
      &-\int_{a}^{x} \widehat{\rho}(x',k)\cosh\left[|k|(x'-x)\right] 
        \dd x' \Bigg)
    \end{aligned}\\
    &\begin{aligned}
    \qquad = &\ds\lim_{x\rightarrow \infty}\Bigg\{|k|\ee^{|k|x}
        \left[\dfrac{A_k\psi_k^+\ee^{-|k|a}}{2}
        +\dfrac{A_k\psi_k^-\ee^{-|k|a}}{2|k|}-\dfrac{1}{2|k|}
        \int_{d_0}^{d_1}\widehat{\rho}(s,k)\ee^{-|k|s}\dd s\right] \\*
        &+\underbrace{|k|\ee^{-|k|x}
        \left[-\dfrac{A_k\psi_k^+\ee^{|k|a}}{2}+\dfrac{A_k\psi_k^-
        \ee^{|k|a}}{2|k|}-\dfrac{1}{2|k|}\displaystyle\int_{d_0}^{d_1}
        \widehat{\rho}(s,k)\ee^{|k|s}\dd s\right]}_{\rightarrow \, 0 
        \ \text{as} \ x \, \rightarrow \, \infty}\Bigg\}
    \end{aligned}\\
    &\begin{aligned}
    \qquad = \ds\lim_{x\rightarrow \infty}\left\{|k|\ee^{|k|x}
        \left[\dfrac{A_k\ee^{-|k|a}}{2|k|}
        \left(|k|\psi_k^+ +\psi_k^-\right)-\dfrac{1}{2|k|}
        \int_{d_0}^{d_1}\widehat{\rho}(s,k)\ee^{-|k|s}\dd s\right]
        \right\}.
    \end{aligned}
\end{align*}
This limit will be $0$ if and only if we choose
\begin{equation}\label{eq:Ak}
    A_k\equiv\dfrac{I_k}{\ee^{-|k|a}\left(|k|\psi_k^+ +\psi_k^-\right)},
\end{equation}
where
\begin{equation}\label{eq:Ik}
    I_k \equiv\int_{d_0}^{d_1}\widehat{\rho}(s,k)\ee^{-|k|s}\dd s.
\end{equation}

By \eqref{eq:dc} and \eqref{eq:chic} we have
\[
    \dfrac{\chi_c-1}{\chi_c+1} = \dfrac{2\ii+\delta-\mu}{\delta+\mu},
\]
so by \eqref{eq:Vs_hat} the potential in the set
$\mathring{\mathcal{S}}$ is
\begin{equation}\label{eq:Vs_hat_simplified}
    \Vhat_s(x,k) = 
    \begin{cases} 
        A_0 & \text{if} \ k = 0, \\
        \dfrac{I_k}{|k|\g}\left[\ee^{|k|x} + \left(\dfrac{2\ii-
        \lambda\delta^{\beta}}{2\delta+\lambda\delta^{\beta}}\right)
        \ee^{-|k|x}\right] & \text{if} \ k \ne 0, 
    \end{cases}
\end{equation}
where
\begin{equation}\label{eq:g}
    \g \equiv \frac{2\chi_c\ee^{-|k|a}\left(\psi_k^+ +
        \frac{1}{|k|}\psi_k^-\right)}{\chi_c+1} =
        \ii\delta\left[1-\dfrac{(\delta+2\ii)
        (2\ii-\lambda\delta^{\beta})}
        {\delta(2\delta+\lambda\delta^{\beta})}\ee^{-2|k|a}\right]
\end{equation}
and $A_0$ is an arbitrary complex constant. In the next section we will
see that the power dissipation is independent of $A_0$. Finally, it can
be shown that \cite{Thaler:2014:BVI}
\begin{equation}\label{eq:lower_bound_g}
    |g|^2 \ge 8\ee^{-4|k|a}\left|\frac{\chi_c}{\chi_c+1}\right|^2
        = 8\left[\frac{1+\delta^2}{(\delta+\mu)^2}\right]\ee^{-4|k|a}
        > 0
\end{equation}
for all $k \in \mathbb{R}$ and all $0 < \delta \le \delta_{\mu}$.  

\begin{remark}\label{rem:constant}
    We may add the term $C\delta(k)$, where $\delta$ is the Dirac delta
    distribution and $C$ is a constant, to $\Vhat$. This corresponds to
    adding a constant to the potential $V$. However, adding a constant
    to the potential will not affect any of the results presented in 
    this paper.    
\end{remark}
\begin{remark}\label{rem:potential}
    By construction, \eqref{eq:Vc_hat}, \eqref{eq:Vs_hat}, 
    \eqref{eq:Vm_hat}, \eqref{eq:Ak}, and \eqref{eq:Ik} characterize 
    all solutions of \eqref{eq:V_pde_full} (at least up to the constant 
    discussed in Remark~\ref{rem:constant}).  
\end{remark}

%%%%%%%%%%%%%%%%%%%%%%%%%%%%%%%%%%%%%%%%%%%%%%%%%%%%%%%%%%%%%%%%%%%%%%%%
%%%%%%%%%%%%%%%%%%%%%%%%%%%%%%%%%%%%%%%%%%%%%%%%%%%%%%%%%%%%%%%%%%%%%%%%

\section{Derivation of the Power Dissipation}\label{sec:derivation_pd}

We begin this section by recording some important properties of $I_k$,
defined in \eqref{eq:Ik}. We omit the proof of the following lemma since
it is is given elsewhere \cite{Thaler:2014:BVI}.  
\begin{lemma}\label{lem:Ik_properties}
    Suppose $\rho \in \Pset$ (where $\Pset$ is defined in
    \eqref{eq:P_def}) and that $I_k$ is defined as in \eqref{eq:Ik}.
    Then 
    \begin{enumerate}
        \item for almost every $s \in [d_0,d_1]$, $\rhohat(s,k)$ is
            infinitely continuously differentiable as a function of $k$
            for all $k \in \mathbb{R}$; 
        \item for each $k \in \mathbb{R}$, 
            \[
                |I_k|^2\le (d_1-d_0)
                    \left\|\rho\right\|^2_{L^2(\mathcal{M})}
                    \ee^{-2|k|d_0};
            \]
        \item if $\rho$ is real valued, then $I_{-k} = \overline{I_k}$;
            this implies that $|I_k|^2$ is an even function of $k$ for
            $k \in \mathbb{R}$;
        \item the function $I_k$ is continuous at $k$ for each $k \in
            \mathbb{R}$;
        \item $\ds\lim_{k\rightarrow 0} I_k = I_0 = 0$;
        \item $\ds\lim_{k\rightarrow 0}|I_k|/|k| = |C_0| <
            \infty$, where 
            \begin{align*}
                C_0 &= \int_{d_0}^{d_1} -s\rhohat(s,0) \dd s
                        + \int_{d_0}^{d_1} 
                        \frac{\partial \rhohat}{\partial k}(s,0) \dd s\\
                    &= -\int_{d_0}^{d_1} 
                        \int_{h_0}^{h_1}s\rho(s,y) \dd y \dd s
                        - \int_{d_0}^{d_1} \int_{h_0}^{h_1} \ii y 
                        \rho(s,y)\dd y\dd s;
            \end{align*}
            moreover, there is a positive constant $C_I$ 
            such that $|I_k|/|k| \le C_I$ for all $k\in \mathbb{R}$.  
    \end{enumerate}
\end{lemma}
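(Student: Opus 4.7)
The plan is to handle the six items in the order they are stated, using elementary Fourier-analytic estimates together with the compact support of $\rho$ in both variables and the charge-neutrality condition \eqref{eq:zero_charge}. Throughout I write $\rhohat(s,k) = \int_{h_0}^{h_1}\rho(s,y)\ee^{-\ii k y}\dd y$, using \eqref{eq:h0_h1}.

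For (1), I would differentiate under the integral sign: for a.e.\ $s \in [d_0,d_1]$, Fubini shows $\rho(s,\cdot)\in L^\infty([h_0,h_1])$, so the formal $n$th $k$-derivative $\int_{h_0}^{h_1}(-\ii y)^n \rho(s,y)\ee^{-\ii ky}\dd y$ is dominated uniformly in $k$ by $|y|^n|\rho(s,y)|$, an integrable function on the bounded $y$-interval. For (2), I apply Cauchy--Schwarz to the $s$-integral defining $I_k$ to pull out $\ee^{-2|k|d_0}$, then bound $\int_{d_0}^{d_1}|\rhohat(s,k)|^2\dd s$ by using a second Cauchy--Schwarz in the $y$-integral defining $\rhohat$ together with Fubini (the resulting factor involving $h_1-h_0$ can be absorbed into the constant or into the $L^2(\mathcal{M})$ norm as in \cite{Thaler:2014:BVI}). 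Item (3) is immediate: since $\rho$ is real, $\rhohat(s,-k) = \overline{\rhohat(s,k)}$ and $\ee^{-|-k|s} = \ee^{-|k|s}$, and complex conjugation commutes with the $s$-integral. For (4), I use dominated convergence, noting that the modulus of the integrand $\rhohat(s,k)\ee^{-|k|s}$ is, on any bounded set of $k$, bounded by a function of $s$ alone that is integrable over $[d_0,d_1]$.

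Part (5) is then immediate from (4) and \eqref{eq:zero_charge}: $I_0 = \int_{d_0}^{d_1}\rhohat(s,0)\dd s = \int_{d_0}^{d_1}\int_{h_0}^{h_1}\rho(s,y)\dd y\dd s = 0$. For (6), which is the only computational item, I Taylor-expand about $k=0$, using (1) for $\rhohat(s,k) = \rhohat(s,0) + k\,\partial_k\rhohat(s,0) + O(k^2)$ and $\ee^{-|k|s} = 1 - |k|s + O(k^2)$, with remainders uniform in $s\in[d_0,d_1]$. Multiplying and integrating, the zeroth-order term vanishes by (5), giving
\begin{equation*}
    \frac{I_k}{|k|} = \operatorname{sgn}(k)\int_{d_0}^{d_1}\frac{\partial\rhohat}{\partial k}(s,0)\dd s - \int_{d_0}^{d_1} s\,\rhohat(s,0)\dd s + O(|k|).
\end{equation*}
Since $\rho$ is real, the first integral equals $-\ii\int_{d_0}^{d_1}\int_{h_0}^{h_1} y\rho(s,y)\dd y\dd s$ (purely imaginary) and the second is $\int_{d_0}^{d_1}\int_{h_0}^{h_1} s\rho(s,y)\dd y\dd s$ (real), so $\lim_{k\to 0^+}I_k/|k| = C_0$ and $\lim_{k\to 0^-}I_k/|k| = \overline{C_0}$; both have modulus $|C_0|$, yielding $\lim_{k\to 0}|I_k|/|k| = |C_0|$ in the two equivalent forms stated.

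For the global bound $|I_k|/|k|\le C_I$, the exponential decay in (2) shows that $|I_k|/|k|$ tends to $0$ as $|k|\to\infty$, while (4) combined with the finite limit established above makes $|I_k|/|k|$ a continuous function on $\mathbb{R}\setminus\{0\}$ that extends continuously to $k=0$, so it is bounded on all of $\mathbb{R}$. The main obstacle I anticipate is justifying that the error terms in the Taylor expansion used for (6) are uniform in $s\in[d_0,d_1]$; this reduces to equicontinuity (in $s$) of $\rhohat$ and $\partial_k\rhohat$ near $k=0$, which follows from item (1) together with the fact that $\rho\in L^\infty\cap L^2(\mathcal{M})$ is supported in a fixed compact set in $y$.
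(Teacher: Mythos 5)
The paper itself does not prove this lemma; it is stated with the remark that the proof ``is given elsewhere'' in~\cite{Thaler:2014:BVI}, so there is no in-text argument to compare against. Evaluating your proposal on its own merits: the overall strategy (differentiation under the integral sign, Cauchy--Schwarz, dominated convergence, and a uniform Taylor expansion at $k=0$) is the natural one, and everything is essentially sound. Your treatment of item~(6) is the careful part and you got it right: the one-sided limits of $I_k/|k|$ are $C_0$ (from $k\to 0^+$) and $\overline{C_0}$ (from $k\to 0^-$), and it is only after observing that $\int\partial_k\rhohat(s,0)\,\dd s$ is purely imaginary while $\int s\,\rhohat(s,0)\,\dd s$ is real that one sees both limits share modulus $|C_0|$, which is what makes $\lim_{k\to 0}|I_k|/|k|$ a single well-defined number. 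The global bound $|I_k|/|k|\le C_I$ also follows correctly from (2), (4), and (6).

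The one place where you should be less casual is item~(2). Applying Cauchy--Schwarz on $[d_0,d_1]$ and then again in the $y$-integral yields
\[
    |I_k|^2 \le (d_1-d_0)(h_1-h_0)\,\|\rho\|^2_{L^2(\mathcal{M})}\,\ee^{-2|k|d_0},
\]
and the factor $(h_1-h_0)$ cannot be ``absorbed into the constant or into the $L^2(\mathcal{M})$ norm'' --- the lemma states a precise inequality with no free constant, and $(h_1-h_0)$ is not generically $\le 1$. As written, the bound in the lemma appears to be missing that factor, and I see no route that avoids it (the direct two-dimensional Cauchy--Schwarz $|I_k|\le\|\rho\|_{L^2}\|\ee^{-\ii ky}\ee^{-|k|s}\|_{L^2([d_0,d_1]\times[h_0,h_1])}$ gives the same thing). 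This is almost certainly a harmless slip in the lemma's statement, since all the paper ever uses is a bound of the form $|I_k|^2\le C_\rho\,\ee^{-2|k|d_0}$, but you should say so explicitly rather than suggesting the discrepancy vanishes by some unstated normalization.

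A smaller point on item~(1): ``Fubini shows $\rho(s,\cdot)\in L^\infty([h_0,h_1])$'' is the right idea but slightly terse; what Fubini (applied to the null set on which $|\rho|>\|\rho\|_{L^\infty}$) actually gives is that for a.e.\ $s$ the slice $\rho(s,\cdot)$ is essentially bounded by $\|\rho\|_{L^\infty(\mathcal{M})}$, which, together with the fixed compact $y$-support $[h_0,h_1]$, provides the uniform dominating function $|y|^n\,\|\rho\|_{L^\infty}\,\chi_{[h_0,h_1]}(y)$ needed to justify differentiating under the integral sign to all orders. This same uniform control in $s$ is exactly what you invoke at the end to make the Taylor remainders in (6) uniform, so it is worth stating once cleanly.
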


For $0 < \xi < a$, the time-averaged electrical power dissipation in the
strip $R_{\xi}$ is defined as
\begin{equation}\label{eq:def_pd}
    E_{\xi}(\delta) \equiv \delta\int_{a-\xi}^{a}
        \int_{-\infty}^{\infty} |\nabla V(x,y)|^2 \dd y \dd x 
        = \delta\int_{a-\xi}^{a}\ds\int_{-\infty}^{\infty}
        \left(\left|\frac{\partial V}{\partial x}\right|^2 + 
        \left|\frac{\partial V}{\partial y}\right|^2 \right)\dd y \dd x,
\end{equation}
where $V(x,y)$ is the (complex) electric potential in the slab
$\mathcal{S}$ due to the charge density $\rho$ and $|z| =
\sqrt{(z')^2+(z'')^2}$ denotes the modulus of the complex number
$z=z'+\ii z''$. Recall that in the quasistatic regime the potential $V$
solves \eqref{eq:V_pde_full} with $\varepsilon$ given by \eqref{eq:dc}.
Since $V \in H^1(\mathring{\mathcal{S}})$, the quantity in
\eqref{eq:def_pd} is well defined and finite \cite{Thaler:2014:BVI}. 

Using the definition in \eqref{eq:def_pd}, we compute the power
dissipation in the strip $R_{\xi}$ (see Figure~\ref{fig:slab}) as
follows. Note that for any function $f:\mathbb{R}^2 \rightarrow
\mathbb{C}$ such that
\[
    \int_{-\infty}^{\infty} |f(x,y)|^2 \dd y < \infty,
\]
we have the Plancherel Theorem, namely
\begin{equation}\label{eq:Plancherel}
    \int_{-\infty}^{\infty} |f(x,y)|^2 \dd y 
        = \dfrac{1}{2\pi}\ds\int_{-\infty}^{\infty} 
            |\widehat{f}(x,k)|^2 \dd k.
\end{equation}
Using \eqref{eq:Plancherel} together with the classical properties of
the Fourier transform in \eqref{eq:FT_properties}, from
\eqref{eq:def_pd} we obtain
\begin{align}
    E_{\xi}(\delta) &= \delta \int_{a-\xi}^{a}
        \left[\int_{-\infty}^{\infty} 
        \left|\dfrac{\partial V_s}{\partial x}(x,y)\right|^2 \dd y 
        + \int_{-\infty}^{\infty} 
        \left|\dfrac{\partial V_s}{\partial y}(x,y)\right|^2 \dd y
        \right] \dd x \nonumber\\[0.2cm]
    &= \dfrac{\delta}{2\pi} \int_{a-\xi}^{a}
        \left[\int_{-\infty}^{\infty} \left|\dfrac{\partial \Vhat_s}
        {\partial x}(x,k)\right|^2 \dd k + \int_{-\infty}^{\infty}|k|^2|
        \Vhat_s(x,k)|^2 \dd k\right] \dd x. \label{eq:Fourier_pd}
\end{align}
Now, \eqref{eq:Vs_hat_simplified}, \eqref{eq:lower_bound_g}, and
Lemma~\ref{lem:Ik_properties} imply that $\Vhat_s$ and
$\partial \Vhat_s/\partial x$ are finite at and near $k =
0$; thus we can omit the point $k = 0$ from the integrals in
\eqref{eq:Fourier_pd} without changing the value of $E_{\xi}(\delta)$.
Inserting \eqref{eq:Vs_hat_simplified} into \eqref{eq:Fourier_pd} gives
(after some straightforward computations)
\begin{align}
    E_{\xi}(\delta) &= \dfrac{2\delta}{2\pi} \int_{a-\xi}^{a}
        \left\{\int_{k\ne0} \dfrac{|I_k|^2}{|\g|^2}\left[\ee^{2|k|x} 
        + \dfrac{\ee^{-2|k|x}\left(\lambda^2\delta^{2\beta} + 4\right)}
        {(2\delta+\lambda\delta^{\beta})^2}\right] \dd k\right\} \dd x 
        \nonumber\\[0.2cm]
    &= \dfrac{\delta}{\pi} \int_{k\ne0} \dfrac{|I_k|^2}{|\g|^2} 
        \left\{\displaystyle\int_{a-\xi}^{a}\left[\ee^{2|k|x} 
        + \dfrac{\ee^{-2|k|x}\left(\lambda^2\delta^{2\beta} + 4\right)}
        {(2\delta+\lambda\delta^{\beta})^2}\right] \dd x\right\} \dd k 
        \nonumber\\[0.2cm]
    &= \dfrac{\delta}{2\pi} \displaystyle\int_{k\ne0} 
        \dfrac{|I_k|^2}{|k||\g|^2}
        \ee^{2|k|a}\left[\left(1-\ee^{-2|k|\xi}\right) 
        + \dfrac{\left(\lambda^2\delta^{2\beta} + 4\right)}
        {(2\delta+\lambda\delta^{\beta})^2}\ee^{-4|k|a}
        \left(\ee^{2|k|\xi}-1\right)\right] \dd k \nonumber\\[0.2cm]
    &= \dfrac{\delta}{\pi} \int_{k>0} \dfrac{|I_k|^2}{k|\gk|^2}\ee^{2ka}
        \left[\left(1-\ee^{-2k\xi}\right)+
        \dfrac{\left(\lambda^2\delta^{2\beta} 
        + 4\right)}{(2\delta+\lambda\delta^{\beta})^2}\ee^{-4ka}
        \left(\ee^{2k\xi}-1\right)\right] \dd k \label{eq:full_pd}
        \\[0.2cm]
    &\ge \widetilde{E}_{\xi}(\delta) \equiv \int_{k\ge\widetilde{k}} 
        F \dd k , \label{eq:lower_bound_pd}
\end{align}
where $\widetilde{k} > 0$ is arbitrary,
\begin{equation}\label{eq:F}
    F \equiv  
        \left(\dfrac{\delta|I_k|^2}{\pi k|\gk|^2}\right) \ee^{2ka} L,
\end{equation}
and
\begin{equation}\label{eq:L}
    L\equiv\left(1-\ee^{-2k\xi}\right) 
        + \dfrac{\left(\lambda^2\delta^{2\beta} + 4\right)}
        {(2\delta+\lambda\delta^{\beta})^2}\ee^{-4ka}
        \left(\ee^{2k\xi}-1\right).
\end{equation}

%%%%%%%%%%%%%%%%%%%%%%%%%%%%%%%%%%%%%%%%%%%%%%%%%%%%%%%%%%%%%%%%%%%%%%%%
%%%%%%%%%%%%%%%%%%%%%%%%%%%%%%%%%%%%%%%%%%%%%%%%%%%%%%%%%%%%%%%%%%%%%%%%

\section{Lower Bound on Power Dissipation}%
\label{sec:preliminary_estimates}

In this section we derive some asymptotic estimates on the function $F$
defined in \eqref{eq:F}. From \eqref{eq:g} we have
\begin{equation}\label{eq:mod_g}
    |\gk|^2 = \delta^2\left\{\left(1 + \dfrac{4+\lambda\delta^{\beta+1}}
        {2\delta^2+\lambda\delta^{\beta+1}}\ee^{-2ka}\right)^2 
        + \left[\dfrac{2(\delta-\lambda\delta^{\beta})}{2\delta^2
        +\lambda\delta^{\beta+1}}\ee^{-2ka}\right]^2\right\}.
\end{equation}
Upon inspection of \eqref{eq:full_pd} and \eqref{eq:F} we see
(heuristically) that if $|\gk|^2 = O(\delta^2)$ as $\delta \rightarrow
0^+$, we may be able to show that the power dissipation blows up as
$\delta \rightarrow 0^+$. To this end we define 
\begin{equation}\label{eq:k0}
    k_0(\delta)\equiv\dfrac{1}{2a}
        \ln\left[\dfrac{1}{\delta(\delta+\mu)}\right] 
        = \dfrac{1}{2a}\ln
            \left(\dfrac{1}{2\delta^2+\lambda\delta^{\beta+1}}\right).
\end{equation} 
Note that $k_0(\delta) \rightarrow \infty$ as $\delta \rightarrow 0^+$.
From \eqref{eq:lower_bound_pd} and recalling \eqref{eq:g} and
\eqref{eq:F}--\eqref{eq:L} we see that 
\begin{equation}\label{eq:k0_pd}
    E_{\xi}(\delta) \ge \ds\int_{k_0(\delta)}^{\infty} F \dd k
\end{equation}
for all $0 < \delta \le \delta_0(\beta,\lambda)$ where $0 < \delta_0 \le
\delta_{\mu}$ is such that $k_0(\delta) > 0$ for $0 < \delta \le
\delta_0$. (Recall that $\delta_{\mu}(\beta,\lambda)$ is defined so that
$\mu = \delta + \lambda\delta^{\beta} \ge 0$ for all $\delta \le
\delta_{\mu}$.)
\begin{lemma}\label{lem:upper_bound_g}
    Suppose $\beta > 0$, $\lambda$ is feasible (see
    \eqref{eq:feasible_lambda}), and $C_1 > 25$. Then there exists $0 <
    \delta_g(\beta,\lambda,C_1) \le \delta_{\mu}(\beta,\lambda)$ such
    that if $0 < \delta \le \delta_g$ and $k \ge k_0(\delta)$ then
    \begin{equation*}
        |\gk|^2 \le C_1\delta^2.
    \end{equation*}
\end{lemma}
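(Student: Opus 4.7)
The plan is to exploit the observation that the denominator $2\delta^2+\lambda\delta^{\beta+1}$ appearing in \eqref{eq:mod_g} is exactly $\ee^{-2k_0(\delta)a}$ by the definition \eqref{eq:k0} of $k_0$. This suggests introducing the dimensionless quantity
\[
    u \equiv \frac{\ee^{-2ka}}{2\delta^2+\lambda\delta^{\beta+1}}
        = \ee^{-2(k-k_0(\delta))a},
\]
so that $u\in(0,1]$ for every $k\ge k_0(\delta)$, with $u=1$ precisely when $k=k_0(\delta)$. Substituting into \eqref{eq:mod_g} converts the estimate to a polynomial statement with no singular denominators:
\[
    \frac{|\gk|^2}{\delta^2} = (1 + A_\delta u)^2 + B_\delta^2\, u^2,
    \qquad A_\delta \equiv 4+\lambda\delta^{\beta+1},
    \quad B_\delta \equiv 2(\delta-\lambda\delta^\beta).
\]

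Next, I would note that $A_\delta\to 4$ and $B_\delta\to 0$ as $\delta\to 0^+$, because $\beta>0$ and (by feasibility of $\lambda$) we have $\mu\ge 0$ for $\delta$ small, so all the powers $\delta,\delta^\beta,\delta^{\beta+1}$ tend to $0$. In particular, there exists $\delta_A(\beta,\lambda)>0$ with $A_\delta>0$ for $0<\delta\le\delta_A$. For such $\delta$, the derivative of the right-hand side with respect to $u$ equals $2A_\delta + 2(A_\delta^2+B_\delta^2)u\ge 0$, so the expression is nondecreasing in $u$ on $[0,1]$. Consequently, on the range $k\ge k_0(\delta)$ the supremum is attained at $u=1$, giving
\[
    \sup_{k\ge k_0(\delta)} \frac{|\gk|^2}{\delta^2}
        \le (1+A_\delta)^2 + B_\delta^2
        = (5+\lambda\delta^{\beta+1})^2 + 4(\delta-\lambda\delta^\beta)^2.
\]

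Finally, the right-hand side converges to $5^2+0 = 25$ as $\delta\to 0^+$. Since $C_1>25$, continuity produces some $\delta_g(\beta,\lambda,C_1)\in(0,\min\{\delta_A,\delta_\mu,\delta_0\}]$ for which this bound does not exceed $C_1$, yielding $|\gk|^2\le C_1\delta^2$ for all $0<\delta\le\delta_g$ and all $k\ge k_0(\delta)$, as required. The only genuinely nontrivial step is the monotonicity in $u$; the rest is bookkeeping to arrange the smallness thresholds compatibly (positivity of $k_0(\delta)$, of $\mu$, and of $A_\delta$). The lemma thus crystallizes the fact that the apparent $O(\delta^{-2})$ blow-up of the bracketed expression in \eqref{eq:mod_g} is, once one follows the natural scaling $k\approx k_0(\delta)$, perfectly cancelled by the overall factor $\delta^2$, leaving a universal constant that is just below $25$.
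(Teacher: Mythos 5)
Your proposal is correct and is essentially the paper's argument, repackaged. The paper expands $|\gk|^2/\delta^2$ into $1 + 2A_\delta u + (A_\delta^2+B_\delta^2)u^2$ (in your notation), observes all coefficients are positive for feasible $\lambda$ and $\delta\le\delta_\mu$, and then replaces $\ee^{-2ka}$ by $\ee^{-2k_0a}$ termwise to get $|\gk|^2\le\delta^2[25+2\lambda\delta^{\beta+1}+4\delta^2+\lambda^2\delta^{2\beta}(4+\delta^2)]$; your monotonicity-in-$u$ observation (derivative $2A_\delta+2(A_\delta^2+B_\delta^2)u\ge 0$) is exactly the same positivity fact, and your final bound $(5+\lambda\delta^{\beta+1})^2+4(\delta-\lambda\delta^\beta)^2$ equals the paper's bracket identically, so the remaining smallness argument coincides.
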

\begin{proof}
    Note that \eqref{eq:mod_g} is equivalent to
    \begin{equation*}
        \left|\gk\right|^2
        = \delta^2\left[1+\frac{2\left(4+\lambda\delta^{\beta+1}\right)}
            {2\delta^2+\lambda\delta^{\beta+1}}\ee^{-2ka} 
            + \frac{16+4\delta^2+\lambda^2\delta^{2\beta}
            \left(4+\delta^2\right)}
            {\left(2\delta^2+\lambda\delta^{\beta+1}\right)^2}
            \ee^{-4ka}\right].
    \end{equation*}
    All three terms in the above equation are positive for all $0 <
    \delta \le \delta_{\mu}$. Also, since $k \ge k_0(\delta)$,
    $\ee^{-2ka} \le \ee^{-2k_0a} = 2\delta^2+\lambda\delta^{\beta+1}$.
    Then for $0 < \delta \le \delta_{\mu}$ we have
    \begin{equation*}
        \left|\gk\right|^2 
            \le \delta^2\left[25+2\lambda\delta^{\beta+1} 
            + 4\delta^2+\lambda^2\delta^{2\beta}
            \left(4+\delta^2\right)\right].
    \end{equation*}
    We then choose $\delta_g(\beta,\lambda,C_1)\le
    \delta_{\mu}(\beta,\lambda)$ small enough to ensure that the term in
    brackets is less than or equal to $C_1$ for all $0 < \delta \le
    \delta_g$.
\end{proof}
\begin{lemma}\label{lem:lower_bound_L}
    Suppose $\beta > 0$, $\lambda$ is feasible, $0 < \xi < a$, and let
    $0 < C_L < 1$ be a constant. Then there exists $0 <
    \delta_L(\beta,\lambda,\frac{\xi}{a},C_L) \le
    \delta_{\mu}(\beta,\lambda)$ such that if $0 < \delta \le \delta_L$
    and $k \ge k_0(\delta)$, then $L \ge C_L$.
\end{lemma}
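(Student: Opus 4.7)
The plan is to exploit the fact that both summands in the definition of $L$ are non-negative, which reduces the lemma to controlling only the first summand $1-\ee^{-2k\xi}$.

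First, I would verify that the coefficient $(\lambda^2\delta^{2\beta}+4)/(2\delta+\lambda\delta^{\beta})^2$ in the second term of $L$ is finite and positive for $\lambda$ feasible and $0<\delta\le \delta_{\mu}$. Indeed, $2\delta+\lambda\delta^{\beta} = \delta+\mu \ge \delta > 0$ since $\mu \ge 0$ on this range, so the denominator does not vanish. Since $e^{2k\xi}-1 > 0$ for $k>0$ and $\xi>0$, the entire second summand of $L$ is non-negative. Consequently,
\begin{equation*}
    L \ge 1 - \ee^{-2k\xi}.
\end{equation*}

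Next I would use the monotonicity of $k\mapsto 1-\ee^{-2k\xi}$ together with the definition of $k_0(\delta)$. For $k\ge k_0(\delta)$,
\begin{equation*}
    \ee^{-2k\xi} \le \ee^{-2k_0(\delta)\xi} = \bigl(2\delta^2+\lambda\delta^{\beta+1}\bigr)^{\xi/a},
\end{equation*}
where the final equality comes from \eqref{eq:k0}. Since $\beta>0$, we have $2\delta^2+\lambda\delta^{\beta+1}\to 0^+$ as $\delta\to 0^+$ (using once more that $\delta\le \delta_{\mu}$ ensures this quantity is strictly positive), and therefore $(2\delta^2+\lambda\delta^{\beta+1})^{\xi/a}\to 0^+$ as well.

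Thus for any prescribed $0<C_L<1$, one can choose $\delta_L(\beta,\lambda,\xi/a,C_L)\le \delta_{\mu}(\beta,\lambda)$ small enough that $(2\delta^2+\lambda\delta^{\beta+1})^{\xi/a} \le 1-C_L$ for all $0<\delta\le \delta_L$. Combining the two displayed inequalities yields $L\ge 1-(1-C_L) = C_L$ on the region $k\ge k_0(\delta)$, $0<\delta\le\delta_L$, completing the proof. There is no real obstacle here: the key observation is simply that the second term of $L$ has a favorable sign, so only the (straightforward) asymptotic behavior of $\ee^{-2k_0(\delta)\xi}$ as $\delta\to 0^+$ needs to be controlled.
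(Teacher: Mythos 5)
Your proof is correct and follows essentially the same route as the paper's: drop the non-negative second summand of $L$, then use $k \ge k_0(\delta)$ and the identity $\ee^{-2k_0(\delta)\xi} = (2\delta^2+\lambda\delta^{\beta+1})^{\xi/a}$ to pick $\delta_L$ so that this quantity stays below $1-C_L$.
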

\begin{proof}
    From \eqref{eq:L} we have
    \begin{align*}
        L
        &= \left(1-\ee^{-2k\xi}\right)+\frac{\lambda^2\delta^{2\beta}+4}
            {\left(2\delta+\lambda\delta^{\beta}\right)^2}\ee^{-4ka}
            \left(\ee^{2k\xi}-1\right) \\
        &\ge 1-\ee^{-2k\xi} \ge 1-\ee^{-2k_0\xi} 
            = 1-\left(2\delta^2+\lambda\delta^{\beta+1}\right)
            ^{\frac{\xi}{a}} \ge C_L
    \end{align*}
    for $0 < \delta \le \delta_L(\beta,\lambda,\frac{\xi}{a},C_L)$,
    where $0 < \delta_L \le \delta_{\mu}$ is such that
    $\left(2\delta^2+\lambda\delta^{\beta+1}\right)^{\frac{\xi}{a}} \le
    1-C_L$ for $0 < \delta \le \delta_L$.
\end{proof}
For $0 < \delta \le \min\{\delta_0,\delta_g,\delta_L\}$ we apply the
bounds from Lemmas~\ref{lem:upper_bound_g}~and~\ref{lem:lower_bound_L}
to \eqref{eq:k0_pd} and, recalling \eqref{eq:F}--\eqref{eq:L} and
\eqref{eq:mod_g}, find
\begin{equation}\label{eq:lower_bound_pd_g_L}
    E_{\xi}(\delta) \ge \dfrac{C_L}{\pi C_1\delta} 
        \int_{k_0(\delta)}^{\infty} \dfrac{|I_k|^2}{k} \ee^{2ka} \dd k.
\end{equation}
Our goal is to show that $E_{\xi}(\delta)$ tends to infinity as a
sequence $\delta_j$ tends to 0.  

Since $|I_k|^2$ is a continuous function of $k$ (by
Lemma~\ref{lem:Ik_properties}), from \eqref{eq:lower_bound_pd_g_L} and
the Mean Value Theorem for Integrals we have, for $0 < \delta \le
\min\{\delta_0,\delta_g,\delta_L\}$, that
\begin{align}
     E_{\xi}(\delta) &\ge \dfrac{C_L}{\pi C_1\delta} 
        \int_{k_0(\delta)}^{k_0(\delta)+\frac{1}
        {\ln\left(\frac{\ee}{\delta}\right)}} \dfrac{|I_k|^2}{k} 
        \ee^{2ka} \dd k \nonumber \\
     &\ge \left(\dfrac{C_L}{\pi C_1}\right)
        \left(\dfrac{\ee^{2k_0(\delta)a}}
        {\delta\left[k_0(\delta)+1\right]}\right) 
        \int_{k_0(\delta)}^{k_0(\delta)+
        \frac{1}{\ln\left(\frac{\ee}{\delta}\right)}}|I_k|^2 \dd k 
        \nonumber \\
     &= \left(\dfrac{C_L}{\pi C_1}\right)
        \left[\dfrac{\ee^{2k_0(\delta)a}}
        {\delta\ln\left(\frac{\ee}{\delta}\right)
        \left[k_0(\delta)+1\right]}\right] 
        |I_{k_0(\delta)+t(\delta)}|^2 \label{eq:G}
\end{align}
for some $0 \le t(\delta) \le
1/\ln\left(\ee/\delta\right) \le 1$. Note that
$t(\delta) \rightarrow 0$ as $\delta \rightarrow 0^+$. So now we must
show the lower bound \eqref{eq:G} tends to infinity as a sequence
$\delta_j$ tends to 0.
\begin{theorem}\label{thm:estimate}
    Let $\rho \in \Pset$, $\beta > 0$, and $\lambda$ be feasible. Assume
    there exist constants $d_* \in [d_0,d_1]$ and $ \Lambda \in
    (0,\infty]$ such that $\limsup_{k\rightarrow
    \infty}|I_k\ee^{kd_*}|=\Lambda$. Then there exists a sequence
    $\{\delta_j\}_{j=1}^{\infty}$ with $\delta_j \rightarrow 0$ as $j
    \rightarrow \infty$ and there exist positive constants $C' =
    \frac{C_L\ee^{-2d_*}}{2\pi C_1}$, $C_2 =
    \frac{C'a\Lambda^2\lambda^{(d_*-a)/a}}{2}$, $C_3 = \ln\lambda$, and
    $C_4 = \frac{C'a\Lambda^2}{4}$ such that 
    \begin{equation}\label{eq:estimate}
        E_{\xi}(\delta_j) \ge
        \begin{cases}
            \dfrac{C_2\delta_j^{(\beta+1)
                \left(\frac{d_*-a}{a}\right)-1}}
                {\left(\ln\delta_j-1\right)
                \left[C_3 + (\beta+1)\ln\delta_j\right]} 
                &\text{for} \ 0 < \beta < 1, \myspace
        	\dfrac{C_4\delta_j^{2\left(\frac{d_*-a}{a}\right)-1}}
	            {\left(\ln\delta_j - 1\right)\ln\delta_j} &\text{for} \ 
		        \beta \ge 1.
        \end{cases}
    \end{equation}
    (The constants $C_2$ and $C_3$ are well defined since we require
    $\lambda > 0$ if $0 < \beta < 1$ --- see
    \eqref{eq:feasible_lambda}.) Moreover, if $\lim_{k\rightarrow
    \infty}|I_k\ee^{kd_*}| = \Lambda$, then for $\delta$ small enough we
    have
    \begin{equation}\label{eq:continuous_estimate}
        E_{\xi}(\delta) \ge
        \begin{cases}
            \dfrac{C_2\delta^{(\beta+1)\left(\frac{d_*-a}{a}\right)-1}}
                {\left(\ln\delta-1\right)\left[C_3 + (\beta+1)\ln\delta
        	    \right]} &\text{for} \ 0 < \beta < 1, \myspace
		    \dfrac{C_4\delta^{2\left(\frac{d_*-a}{a}\right)-1}}
        	    {\left(\ln\delta-1\right)\ln\delta} &\text{for}\ \beta\ge 1.
        \end{cases}
    \end{equation}
\end{theorem}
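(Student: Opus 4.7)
The plan is to start from the lower bound
\[
E_{\xi}(\delta) \;\ge\; \left(\frac{C_L}{\pi C_1}\right)\frac{\ee^{2k_0(\delta)a}}{\delta\,\ln(\ee/\delta)\,[k_0(\delta)+1]}\,\left|I_{k_0(\delta)+t(\delta)}\right|^2
\]
already derived in (4.7), where $0\le t(\delta)\le 1/\ln(\ee/\delta)$, and to transfer the $\limsup$ hypothesis into a pointwise lower bound on $|I_{k_0+t}|^2$. Using $\ee^{2k_0 a}=1/[2\delta^2+\lambda\delta^{\beta+1}]$, every other piece of (4.7) is already explicit in $\delta$, so the entire problem reduces to producing a sequence $\delta_j\to 0^+$ along which $|I_{k_0(\delta_j)+t(\delta_j)}|^2\,\ee^{2[k_0(\delta_j)+t(\delta_j)]d_*}$ stays comparable to $\Lambda^2$ (or to $+\infty$, in the case $\Lambda=\infty$).

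To build this sequence, I would first invoke the $\limsup$ hypothesis to extract $k_j\to\infty$ with $|I_{k_j}|\ee^{k_j d_*}\ge \Lambda/\sqrt{2}$ (or $\to\infty$). The map $\delta\mapsto k_0(\delta)$ is continuous, strictly decreasing for $\delta$ near $0$, and unbounded as $\delta\to 0^+$, so I can define $\delta_j$ uniquely by $k_0(\delta_j)=k_j$, and then $\delta_j\to 0^+$. The unavoidable shift $t(\delta_j)\in[0,1/\ln(\ee/\delta_j)]\to 0$ is then absorbed in two bookkeeping steps: the exponential is controlled uniformly via $\ee^{-2t(\delta_j)d_*}\ge \ee^{-2d_*}$, which is precisely the origin of the factor $\ee^{-2d_*}$ in the definition of $C'$; and the continuity of $I_k$ from Lemma~\ref{lem:Ik_properties}(4), combined with $t(\delta_j)\to 0$, lets me pass to a further subsequence along which $|I_{k_j+t(\delta_j)}|^2\,\ee^{2k_j d_*}$ is still bounded below by a fixed positive multiple of $\Lambda^2$. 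This is the main obstacle: the MVT point $t(\delta_j)$ is only implicitly determined, so some care is needed to prevent $|I_k|$ from dropping sharply between $k_j$ and $k_j+t(\delta_j)$. The smoothness of $I_k$ inherited from the compact support of $\rho$ via Lemma~\ref{lem:Ik_properties}(1) is what makes this feasible, and it is the only place in the argument where a purely algebraic plug-in is not enough.

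With the substitution $|I_{k_0+t}|^2\,\ee^{2(k_0+t)d_*}\ge \tfrac{1}{2}\Lambda^2$ in hand along the sequence $\delta_j$, everything remaining is algebra. I would write
\[
\ee^{2k_0 a}\,\ee^{-2(k_0+t)d_*} \;\ge\; \ee^{-2d_*}\,\ee^{2k_0(a-d_*)} \;=\; \ee^{-2d_*}\bigl[2\delta_j^2+\lambda\delta_j^{\beta+1}\bigr]^{(d_*-a)/a},
\]
and split on $\beta$. For $0<\beta<1$, the bracket is asymptotic to $\lambda\delta_j^{\beta+1}$, contributing $\lambda^{(d_*-a)/a}\delta_j^{(\beta+1)(d_*-a)/a}$; since $2ak_0(\delta_j)=-\ln[2\delta_j^2+\lambda\delta_j^{\beta+1}]\sim -[C_3+(\beta+1)\ln\delta_j]$ and $\ln(\ee/\delta_j)=-(\ln\delta_j-1)$, the two logarithmic factors in (4.8) appear exactly, and collecting the constants produces $C_2$. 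For $\beta\ge 1$ the bracket is asymptotic to $2\delta_j^2$, yielding the exponent $2(d_*-a)/a$, while $2ak_0(\delta_j)\sim -2\ln\delta_j$ produces the denominator $(\ln\delta_j-1)\ln\delta_j$ and the constant $C_4$. Finally, for the ``moreover'' statement, the upgrade from $\limsup$ to $\lim$ removes the need to pass to a subsequence: since $k_0(\delta)+t(\delta)\to\infty$ as $\delta\to 0^+$ along every path, $|I_{k_0+t}|^2\,\ee^{2(k_0+t)d_*}\to\Lambda^2$ holds for every sufficiently small $\delta$, and the same algebraic chain delivers (4.9).
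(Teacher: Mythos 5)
Your proposal follows essentially the same route as the paper: start from the lower bound \eqref{eq:G}, define $\delta_j$ implicitly by $k_0(\delta_j)=k_j$ where $\{k_j\}$ realizes the $\limsup$, absorb the Mean-Value-Theorem shift via $\ee^{-2t(\delta)d_*}\ge\ee^{-2d_*}$ (which is exactly the origin of the $\ee^{-2d_*}$ in $C'$), then convert $\ee^{-2k_0(d_*-a)}=(2\delta^2+\lambda\delta^{\beta+1})^{(d_*-a)/a}$ and split on $\beta$ using $2\delta^2+\lambda\delta^{\beta+1}\ge\lambda\delta^{\beta+1}$ (resp.\ $\ge\delta^2$). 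One small imprecision: you say continuity ``lets me pass to a further subsequence'' along which $|I_{k_j+t(\delta_j)}\ee^{(k_j+t(\delta_j))d_*}|$ stays large, but $t(\delta_j)$ is already determined once $\delta_j$ is fixed, so passing to a subsequence of $j$ does not give you any extra freedom; what the paper does (and what you should say) is invoke continuity of $k\mapsto I_k\ee^{kd_*}$ together with $t(\delta_j)\to 0$ to conclude directly that $|I_{k'_j}\ee^{k'_jd_*}|\to\Lambda$, with no subsequence extraction needed. Also, to reproduce the stated constants $C_2=\tfrac{C'a\Lambda^2\lambda^{(d_*-a)/a}}{2}$ and $C_4=\tfrac{C'a\Lambda^2}{4}$ you should take the eventual lower bound $|I_{k'_j}\ee^{k'_jd_*}|\ge\Lambda/2$ (hence $\Lambda^2/4$ after squaring), rather than the $\Lambda/\sqrt{2}$ and $\Lambda^2/2$ you wrote, which would yield different numerical prefactors.
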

\begin{proof}
    If $0 < \delta \le \min\{\delta_0,\delta_g,\delta_L\}$, then
    \eqref{eq:G} holds. Since $0 \le t(\delta) \le 1$ and $k_0(\delta) +
    1 \le 2k_0(\delta)$ for $\delta$ small enough (equivalently
    $k_0(\delta)$ large enough), \eqref{eq:G} implies
    \begin{align}
        E_{\xi}(\delta) &\ge \left(\dfrac{C_L}{2\pi C_1}\right)
            \left[\dfrac{\ee^{2k_0(\delta)a}}
            {\delta\ln\left(\frac{\ee}{\delta}\right)k_0(\delta)}\right]
            \left|[I_{k_0(\delta)+t(\delta)}]
            \ee^{[k_0(\delta)+t(\delta)]d_*}\right|^2
            \ee^{-2[k_0(\delta)+t(\delta)]d_*} \nonumber\\
        &\ge \dfrac{C'\ee^{-2k_0(\delta)(d_*-a)}}
            {\delta\ln\left(\frac{\ee}{\delta}\right)
            k_0(\delta)} \left|I_{k'(\delta)}
            \ee^{k'(\delta)d_*}\right|^2, \label{eq:intermediate_again}
    \end{align}
    where $k'(\delta) \equiv k_0(\delta) + t(\delta)$. 

    Since $\ds\limsup_{k\rightarrow\infty}|I_k\ee^{kd_*}| = \Lambda$
    there exists a sequence $\{k_j\}_{j=1}^{\infty}$ with $k_j
    \rightarrow \infty$ as $j \rightarrow \infty$ and 
    \[
        \ds\lim_{j\rightarrow \infty}|I_{k_j}\ee^{k_jd_*}| = \Lambda.
    \]
    We choose a sequence $\{\delta_j\}_{j=1}^{\infty}$ such that
    $\delta_j \rightarrow 0^+$ as $j \rightarrow \infty$ and $k_j =
    k_0(\delta_j)$ (where $k_0(\delta) =
    -\frac{1}{2a}\ln(2\delta^2+\lambda\delta^{\beta+1}) $ is defined in
    \eqref{eq:k0}).    
	
    Since $|I_k\ee^{kd_*}|$ is a continuous function of $k$ and
    $t(\delta_j) \rightarrow 0$ as $j \rightarrow \infty$ (i.e., as
    $\delta_j \rightarrow 0^+$), we have
    \[
        \ds\lim_{j\rightarrow \infty} |I_{k'_j}\ee^{k'_jd_*}| = \Lambda,
    \]
    where $k'_j = k_0(\delta_j) + t(\delta_j) = k_j + t(\delta_j)
    \rightarrow \infty$ as $j \rightarrow \infty$. Thus, for $j$ large
    enough (i.e., $\delta_j$ small enough), $|I_{k'_j}\ee^{k'_jd_*}| \ge
    \frac{\Lambda}{2}$. Hence for large enough $j$ we have
    \begin{equation}\label{eq:last_lb}
        E_{\xi}(\delta_j) \ge \left(\dfrac{C'\Lambda^2}{4}\right)
            \dfrac{\ee^{-2k_0(\delta_j)(d_*-a)}}
            {\delta_j\ln\left(\frac{\ee}{\delta_j}\right)k_0(\delta_j)} 
            = \left(\dfrac{C'\Lambda^2}{4}\right)
            \dfrac{\left(2\delta_j^2+\lambda\delta_j^{\beta+1}\right)
            ^{(d_*-a)/a}}{\delta_j\ln\left(\frac{\ee}{\delta_j}\right)
            k_0(\delta_j)}.
    \end{equation}
    Now \eqref{eq:estimate} is obtained by applying the inequality 
        \begin{equation*}
            2\delta_j^2 + \lambda\delta_j^{\beta+1} \ge 
            \begin{cases}
                \lambda\delta_j^{\beta+1} &\text{for} \ 0 < \beta < 1,\\
                \delta_j^2 &\text{for}\ \beta \ge 1,
            \end{cases}
        \end{equation*}
    which holds for $j$ large enough, to \eqref{eq:last_lb}.
	
    Similarly, if the stronger condition $\lim_{k\rightarrow
    \infty}|I_k\ee^{kd_*}| = \Lambda$ holds, since
    $k'(\delta)\rightarrow \infty$ as $\delta \rightarrow 0^+$ we have
    $|I_{k'(\delta)}\ee^{k'(\delta)d_*}| \ge \frac{\Lambda}{2}$ and 
    \begin{equation}\label{eq:last_lb_continuous}
        E_{\xi}(\delta) \ge \left(\dfrac{C'\Lambda^2}{4}\right)
            \dfrac{\left(2\delta^2+\lambda\delta^{\beta+1}\right)
            ^{(d_*-a)/a}}
            {\delta\ln\left(\frac{\ee}{\delta}\right)k_0(\delta)}
    \end{equation}
    for $\delta$ small enough; this is the continuous analog of
    \eqref{eq:last_lb} and is a direct consequence of
    \eqref{eq:intermediate_again}. Finally,
    \eqref{eq:continuous_estimate} is obtained by inserting the
    inequality
    \begin{equation*}
        2\delta^2 + \lambda\delta^{\beta+1} \ge 
        \begin{cases}
            \lambda\delta^{\beta+1} &\text{for} \ 0 < \beta < 1, \\
            \delta^2 &\text{for} \ \beta \ge 1,
        \end{cases}
    \end{equation*}
    which holds for $\delta$ small enough, into
    \eqref{eq:last_lb_continuous}.
\end{proof}

The next corollary follows immediately.
\begin{corollary}\label{cor:estimate}
    Let $\rho \in \Pset$, $\beta > 0$, and $\lambda$ be feasible. Assume
    there exist constants $d_* \in [d_0,d_1]$ and $ \Lambda \in
    (0,\infty]$ such that
    \begin{enumerate}
        \item[(a)] $\limsup\limits_{k\rightarrow \infty}|I_k\ee^{kd_*}|
            =\Lambda$; or
        \item[(b)] $\ds\lim_{k\rightarrow\infty}|I_k\ee^{kd_*}|
            =\Lambda$.
    \end{enumerate}
    If $d_* < \tau(\beta)a$, where $\tau$ is the continuous function 
    \begin{equation}\label{eq:tau}
        \tau(\beta)\equiv
        \begin{cases} 
            \dfrac{\beta+2}{\beta+1} &\text{if} \ 0 < \beta < 1,\myspace
            \dfrac{3}{2} &\text{if} \ \beta \ge 1,
        \end{cases}
    \end{equation}
    then $\limsup_{\delta \rightarrow 0^+}E_{\xi}(\delta) = \infty$ if
    (a) holds (weak CALR) and $\lim_{\delta \rightarrow
    0^+}E_{\xi}(\delta) = \infty$ if (b) holds (strong CALR).
\end{corollary}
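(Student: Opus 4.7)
The plan is to read off Corollary~\ref{cor:estimate} directly from the two estimates \eqref{eq:estimate} and \eqref{eq:continuous_estimate} proved in Theorem~\ref{thm:estimate}, by observing that $d_* < \tau(\beta)a$ is precisely the condition under which the exponent of $\delta$ (or $\delta_j$) in the numerator of those bounds is strictly negative.

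First I would apply Theorem~\ref{thm:estimate} to obtain, under hypothesis (a), the sequence $\{\delta_j\}$ with $\delta_j \to 0^+$ for which \eqref{eq:estimate} holds, and, under hypothesis (b), the stronger pointwise bound \eqref{eq:continuous_estimate} for all sufficiently small $\delta > 0$. The key algebraic step is then to extract the sign of the exponent of $\delta_j$ (respectively $\delta$) in the numerator. For $0 < \beta < 1$ this exponent is $(\beta+1)\bigl(\tfrac{d_*-a}{a}\bigr) - 1$, and a short rearrangement gives the equivalence
\begin{equation*}
    (\beta+1)\left(\tfrac{d_*-a}{a}\right) - 1 < 0
        \quad \Longleftrightarrow \quad
        d_* < \tfrac{\beta+2}{\beta+1}\,a = \tau(\beta)\,a.
\end{equation*}
For $\beta \ge 1$ the exponent is $2\bigl(\tfrac{d_*-a}{a}\bigr) - 1$, and the analogous computation gives
\begin{equation*}
    2\left(\tfrac{d_*-a}{a}\right) - 1 < 0
        \quad \Longleftrightarrow \quad
        d_* < \tfrac{3}{2}\,a = \tau(\beta)\,a.
\end{equation*}
So the hypothesis $d_* < \tau(\beta)a$ is equivalent, in both regimes of $\beta$, to the numerator in the corresponding bound behaving like a strictly negative power of $\delta$ as $\delta \to 0^+$.

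Next I would dispose of the denominator. In both cases the denominator is a product of logarithmic factors in $\delta$ (specifically $(\ln\delta - 1)\ln\delta$ for $\beta \ge 1$, and $(\ln\delta - 1)[C_3 + (\beta+1)\ln\delta]$ for $0 < \beta < 1$), and hence grows only like $(\ln\delta)^2$ in modulus as $\delta \to 0^+$. Since any strictly negative power $\delta^{-\alpha}$ (with $\alpha > 0$) dominates any power of $|\ln\delta|$ as $\delta \to 0^+$, the lower bound in \eqref{eq:estimate} tends to $+\infty$ along $\delta_j$, which gives $\limsup_{\delta \to 0^+} E_\xi(\delta) = \infty$ under (a); likewise the lower bound in \eqref{eq:continuous_estimate} tends to $+\infty$ as $\delta \to 0^+$, which gives $\lim_{\delta \to 0^+} E_\xi(\delta) = \infty$ under (b).

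There is essentially no obstacle here, since Theorem~\ref{thm:estimate} has already done all of the analytic work; the corollary is a one-line elementary observation about the exponent. The only thing to be mildly careful about is that, under hypothesis (a), one only gets a $\limsup$ statement because the bound only holds along the sequence $\{\delta_j\}$ produced by Theorem~\ref{thm:estimate}, whereas under the stronger hypothesis (b) the bound holds for all small $\delta$ and therefore upgrades to a genuine $\lim$.
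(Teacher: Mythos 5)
Your proof is correct and matches the paper's intent: the paper simply states that the corollary ``follows immediately'' from Theorem~\ref{thm:estimate}, and your argument is the natural unpacking of that claim---checking that $d_* < \tau(\beta)a$ makes the exponent of $\delta$ in the numerator of \eqref{eq:estimate} and \eqref{eq:continuous_estimate} strictly negative (in both $\beta$-regimes), observing that the denominator only grows logarithmically, and noting that (a) gives a lower bound along a subsequence (hence $\limsup = \infty$) while (b) gives a lower bound for all small $\delta$ (hence $\lim = \infty$). One small point you could make explicit is that the denominator in each case is a product of two factors that are both negative for small $\delta$ (since $\ln\delta \to -\infty$, and for $0<\beta<1$ the feasibility condition forces $\lambda>0$ so $C_3 = \ln\lambda$ is finite), so the lower bound is indeed positive and tending to $+\infty$.
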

\begin{remark}\label{rem:region}
    According to the previous corollary, the region of influence, i.e.,
    the region in which the charge density $\rho$ should be placed to
    cause the anomalous localized resonance near the inner right edge of
    the slab, is the interval $(a,\tau(\beta)a)$. In particular we can
    take $d_1 < \tau(\beta)a$ to guarantee that $\rho$ is completely
    inside this region (assuming the support of $\rho$ is small enough
    so that $d_0 > a$ as well). This region of influence is the same as
    that found in the cloaking paper by Milton and Nicorovici
    \cite{Milton:2006:CEA} and also in the superlensing paper by Milton,
    Nicorovici, McPhedran, and Podolskiy \cite{Milton:2005:PSQ} in
    the particular case when $\rho$ is a dipole source. Also see
    Bergman's work \cite{Bergman:2014:PIP}. 
\end{remark}

%%%%%%%%%%%%%%%%%%%%%%%%%%%%%%%%%%%%%%%%%%%%%%%%%%%%%%%%%%%%%%%%%%%%%%%%

\subsection{Numerical Discussion}\label{subsec:numerical_discussion}

In this section, we study the behavior of two charge density
distributions $\rho$. In particular, we show they satisfy the conditions
of Theorem~\ref{thm:estimate} that lead to weak CALR, i.e., they satisfy
$\limsup_{k\rightarrow \infty}|I_k\ee^{kd_*}|=\Lambda$. We also provide
plots illustrating the blow-up of the dissipated electrical power as
$\delta \rightarrow 0^+$ for these charge density distributions.

%%%%%%%%%%%%%%%%%%%%%%%%%%%%%%%%%%%%%%%%

\subsubsection{Rectangle}\label{subsubsec:rectangle}

The first charge density distribution we consider has support in a
rectangle centered at $(x_0,y_0)$. The left and right edges of the
rectangle are at $d_0 = x_0-d$ and $d_1 = x_0+d$, respectively, where $d
> 0$. The bottom and top edges are at $h_0 = y_0-h$ and $h_1 = y_0+h$,
respectively, where $h > 0$. These parameters are chosen so $d_0 > a$.
We define the charge density distribution as
\begin{equation*}%\label{eq:rectangle_rho}
    \rho(x,y) = 
    \begin{cases}
        Q &\text{for} \ (x,y) \in [d_0,d_1] \times (y_0,h_1], \\
        -Q &\text{for} \ (x,y) \in [d_0,d_1] \times [h_0,y_0), \\
        0 &\text{otherwise,}
    \end{cases}
\end{equation*}
where $Q \ne 0$. Since $\rho \in L^1(\mathcal{M})\cap L^2(\mathcal{M})$,
we can use calculus and \eqref{eq:FT} and \eqref{eq:Ik} to find
\begin{align*}
    \rhohat(x,k) &= -\frac{4Q}{k}\left[\sin(y_0k) + \ii\cos(y_0k)\right]
        \sin^2\left(\frac{hk}{2}\right) %\label{eq:rectangle_rho_hat} \\ 
    \intertext{and}
    |I_k| &= \frac{4|Q|}{k^2}\sin^2\left(\frac{hk}{2}\right)\ee^{-d_0k}
        \left(1-\ee^{-2dk}\right). %\label{eq:rectangle_I_k}
\end{align*}
If we take $k_j = \frac{(2j-1)\pi}{h}$ for $j = 1$, $2$, $\ldots$ and
$d_* = d_0 + \alpha$ for $\alpha > 0$ we have
\begin{equation*}
    |I_{k_j}\ee^{d_*k_j}| = \frac{4|Q|}{k_j^2}\ee^{\alpha k_j}
        \left(1-\ee^{-2dk_j}\right) \rightarrow \infty \quad 
        \text{as} \ j \rightarrow \infty.
\end{equation*}
This implies $\limsup_{k\rightarrow \infty} |I_k\ee^{d_*k}| = \infty$,
so $\rho$ satisfies the conditions of Theorem~\ref{thm:estimate}. Thus
there is a sequence $\delta_j \rightarrow 0$ as $j \rightarrow \infty$
such that $E_{\xi}(\delta_j) \rightarrow \infty$ as $j \rightarrow
\infty$ if $d_0 + \alpha < \tau(\beta)a$; according to
Theorem~\ref{thm:bounded} in the next section, if $d_0 > \tau(\beta)a$,
then $E_{\xi}(\delta) \rightarrow 0$ as $\delta \rightarrow 0^+$.  

Since $\alpha > 0$ is arbitrary, the limit superior of the power
dissipation blows up as the dissipation in the lens tends to 0 as long
as any part of the charge density distribution $\rho$ is within the
region of influence $(a,\tau(\beta)a)$.

In Figure~\ref{fig:rectangle_blow_up} we plot $E_{\xi}(\delta)$ for the
rectangular charge density $\rho$ studied above for various values of
$\beta$ and $\delta$. The support of $\rho$ is centered at $(6,6)$, and
has width and height 2; thus $d_0 = h_0 = 5$ and $d_1 = h_1 = 7$. We
take $0 < \beta < 1$ and $a = d_1/\tau(\beta) =
d_1[(\beta+1)/(\beta+2)]$, so the support of $\rho$ is completely
inside the region of influence (see \eqref{eq:tau} and the remark
following it). Figure~\ref{fig:rectangle_blow_up}(a) is a plot of the
power dissipation $E_{\xi}(\delta)$ as a function of $\beta$ and
$\delta$. We observe the divergence of $E_{\xi}(\delta)$ as $\delta
\rightarrow 0^+$ for $0 < \beta < 1$; in particular the divergence
appears to be more severe for larger values of $\beta$. In
Figure~\ref{fig:rectangle_blow_up}(b) we fix $\delta = 10^{-16}$ and
plot $E_{\xi}(\delta)$ as a function of $\beta$. Note the strong
dependence of the divergence of $E_{\xi}(\delta)$ on the relative
dissipation parameter $\beta$. Finally, in
Figure~\ref{fig:rectangle_blow_up}(c) we plot $E_{\xi}(\delta)$ as a
function of $\delta$ for $\beta = 0.8$.  
\begin{figure}[!bt]
    \begin{center}
        \includegraphics{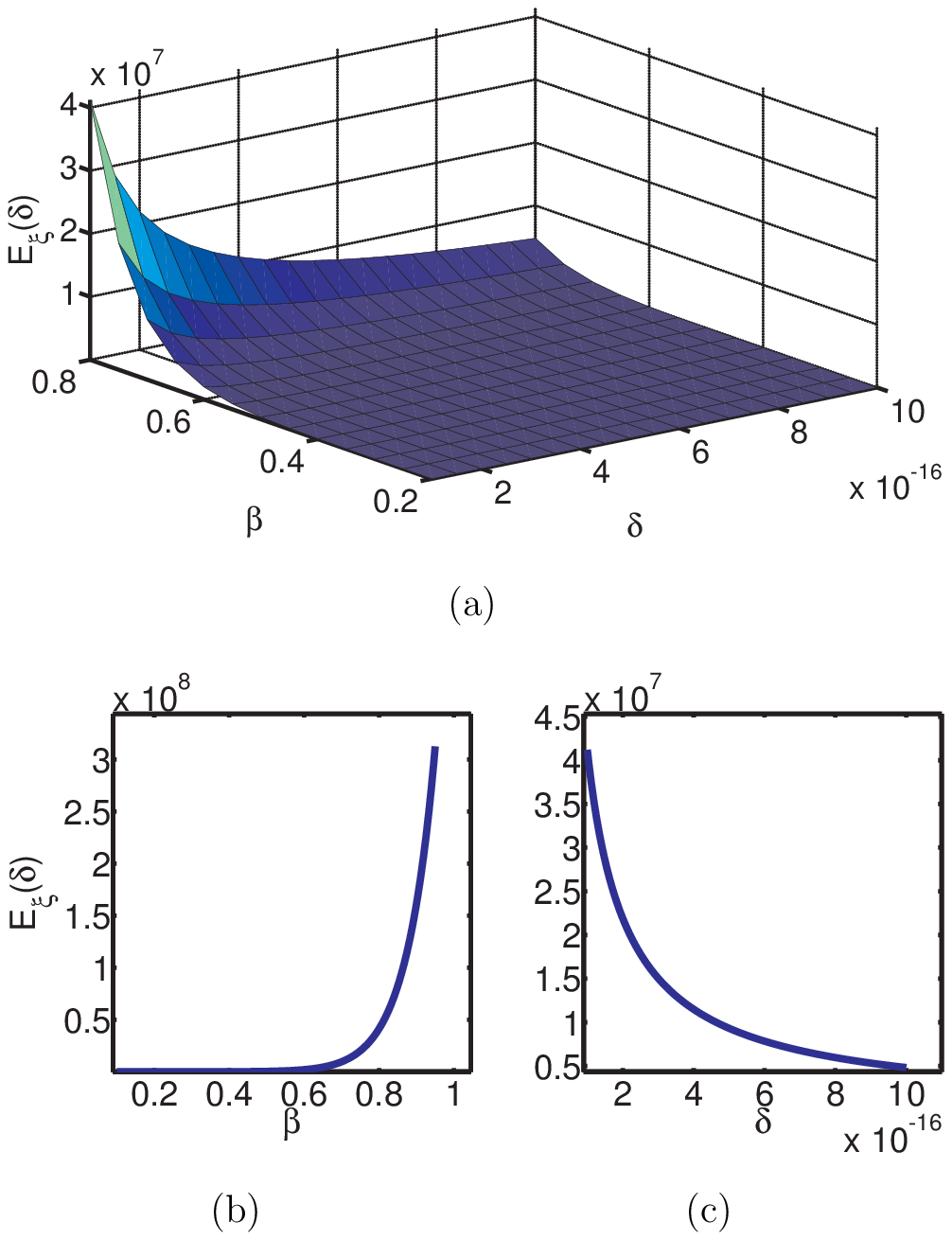}
    \end{center}
    \caption[Rectangular Charge Density Blow-Up]
    {(Rectangular $\rho$) In all of these subfigures we take $a =
    d_1/\tau(\beta)$ so $\rho$ is completely within the region of
    influence. (a) A plot of $E_{\xi}(\delta)$ versus $\beta$ and
    $\delta$ --- the $z$\nobreakdash -axis scale is $10^7$; (b) a plot
    of $E_{\xi}(\delta)$ for $\delta = 10^{-16}$ as a function of
    $\beta$ --- the $y$\nobreakdash -axis scale is $10^8$; (c) a plot of
    $E_{\xi}(\delta)$ for $\beta = 0.8$ as a function of $\delta$ ---
    the $y$\nobreakdash -axis scale is $10^7$.}% 
    \label{fig:rectangle_blow_up}
\end{figure}

%%%%%%%%%%%%%%%%%%%%%%%%%%%%%%%%%%%%%%%%

\subsubsection{Circle}\label{subsubsec:circle}

We now consider a charge density distribution with support in a circle
of radius $R$ centered at $(x_0,y_0)$. In this case we have $d_0 =
x_0-R$, $d_1 = x_0+R$, $h_0(x) = y_0-\sqrt{R^2-(x-x_0)^2}$, and $h_1(x)
= y_0+\sqrt{R^2-(x-x_0)^2}$. Again we choose the parameters so that $d_0
> a$. We define the charge density distribution as
\begin{equation*}%\label{eq:circle_rho}
    \rho(x,y) = 
    \begin{cases}
        Q &\text{for} \ d_0 \le x \le d_1, y_0 < y \le h_1(x), \\
        -Q &\text{for} \ d_0 \le x \le d_1, h_0(x) \le y < y_0, \\
        0 &\text{otherwise,}
    \end{cases}
\end{equation*}
where $Q \ne 0$. Again, $\rho \in L^1(\mathcal{M}) \cap
L^2(\mathcal{M})$, so \eqref{eq:FT} and \eqref{eq:Ik} imply
\begin{align*}
    \rhohat(x,k) &= -\frac{4Q}{k}\left[\sin(y_0k) + \ii\cos(y_0k)\right]
        \sin^2\left[\frac{k}{2}\sqrt{R^2-(x-x_0^2)}\right] 
        %\label{eq:circle_rho_hat} \\
    \intertext{and}
    |I_k| &= \frac{4|Q|}{k}\int_{d_0}^{d_1}\sin^2\left[\frac{k}{2}
        \sqrt{R^2-(x-x_0^2)}\right]\ee^{-kx} \dd x. 
        %\label{eq:circle_I_K}
\end{align*}
\emph{Claim}: If $d_* = x_0 + \alpha$ for $\alpha > 0$, then 
$\limsup_{k\rightarrow \infty} |I_k\ee^{d_*k}| = \infty$. \\
\begin{proof}[Proof of Claim]
    Let $\{k_j\}_{j=1}^{\infty}$ be the sequence whose $j^{\mathrm{th}}$
    term is given by
    \[
        k_j = \frac{2}{R}\left(\frac{\pi}{2} + 2\pi j\right).
    \] 
    Then
    \begin{equation}\label{eq:circle_lower_bound_I_k}
        |I_{k_j}| \ge \frac{4|Q|}{k_j}\int_{x_0}^{x_0 + \gamma_j}\sin^2
            \left[\frac{k_j}{2}\sqrt{R^2-(x-x_0^2)}\right]\ee^{-k_jx} 
            \dd x,
    \end{equation} 
    where $\gamma_j = \frac{R}{j}$ for $j = 1, 2, \ldots$.

    For $x\in[x_0,x_0+\gamma_j]$ we have
    \begin{equation}\label{eq:circle_x_bounds}
        \frac{k_j}{2}\sqrt{R^2-\gamma_j^2} \le 
        \frac{k_j}{2}\sqrt{R^2-(x-x_0)^2} \le \frac{k_jR}{2}.
    \end{equation}
    We also have
    \begin{equation*}%\label{eq:circle_x_bounds_2}
        \frac{k_j}{2}\sqrt{R^2-\gamma_j^2} 
            = \left(\frac{\pi}{2} + 2\pi j\right)\sqrt{1-\frac{1}{j^2}} 
            = \frac{\pi}{2} - \zeta_j + 2\pi j,
    \end{equation*}
    where
    \[
        \zeta_j \equiv \frac{\frac{\pi}{2} + 2\pi j}
            {j^2\left(1+\sqrt{1-\frac{1}{j^2}}\right)} 
            = \left(\frac{\pi}{2} + 2\pi j\right)
            \left(1-\sqrt{1-\frac{1}{j^2}}\right).
    \]
    Note $\zeta_j \rightarrow 0^+$ as $j \rightarrow \infty$ so that $0
    < \zeta_j < \pi/2$ for $j$ large enough. In combination with
    \eqref{eq:circle_x_bounds} this implies
    \begin{equation}\label{eq:circle_x_bounds_3}
        2\pi j < \frac{\pi}{2} - \zeta_j + 2\pi j \le \frac{k_j}{2}
            \sqrt{R^2-(x-x_0)^2} \le \frac{k_jR}{2} 
            = \frac{\pi}{2}+2\pi j
    \end{equation}
    for $j$ large enough. Since $\sin\theta$ is monotone increasing for
    $\theta \in (0,\pi/2$), \eqref{eq:circle_lower_bound_I_k}
    and \eqref{eq:circle_x_bounds_3} imply
    \begin{align*}
        |I_{k_j}| &\ge \frac{4|Q|}{k_j}\sin^2\left(\frac{\pi}{2}-\zeta_j
            +2\pi j\right)\int_{x_0}^{x_0+\gamma_j}\ee^{-k_j x} \dd x\\
            &=\frac{4|Q|}{k_j^2}\sin^2\left(\frac{\pi}{2}-\zeta_j\right)
            \ee^{-x_0k_j}\left(1-\ee^{-\gamma_jk_j}\right).
    \end{align*}
    Hence for $j$ large enough we have
    \begin{equation*}
        |I_{k_j}\ee^{d_*k_j}| \ge 
            \frac{4|Q|}{k_j^2}\sin^2\left(\frac{\pi}{2}-\zeta_j\right)
            \ee^{\alpha k_j}\left(1-\ee^{-\gamma_jk_j}\right) \ge 
            |Q|\left(1-\ee^{-4\pi}\right)\frac{\ee^{\alpha k_j}}{k_j^2};
    \end{equation*}
    this expression goes to $\infty$ as $j \rightarrow \infty$.  
    Thus $\limsup_{k\rightarrow \infty}|I_k\ee^{d_*k}| =
    \infty$.
\end{proof}

\begin{figure}[!b]
    \begin{center}
        \includegraphics{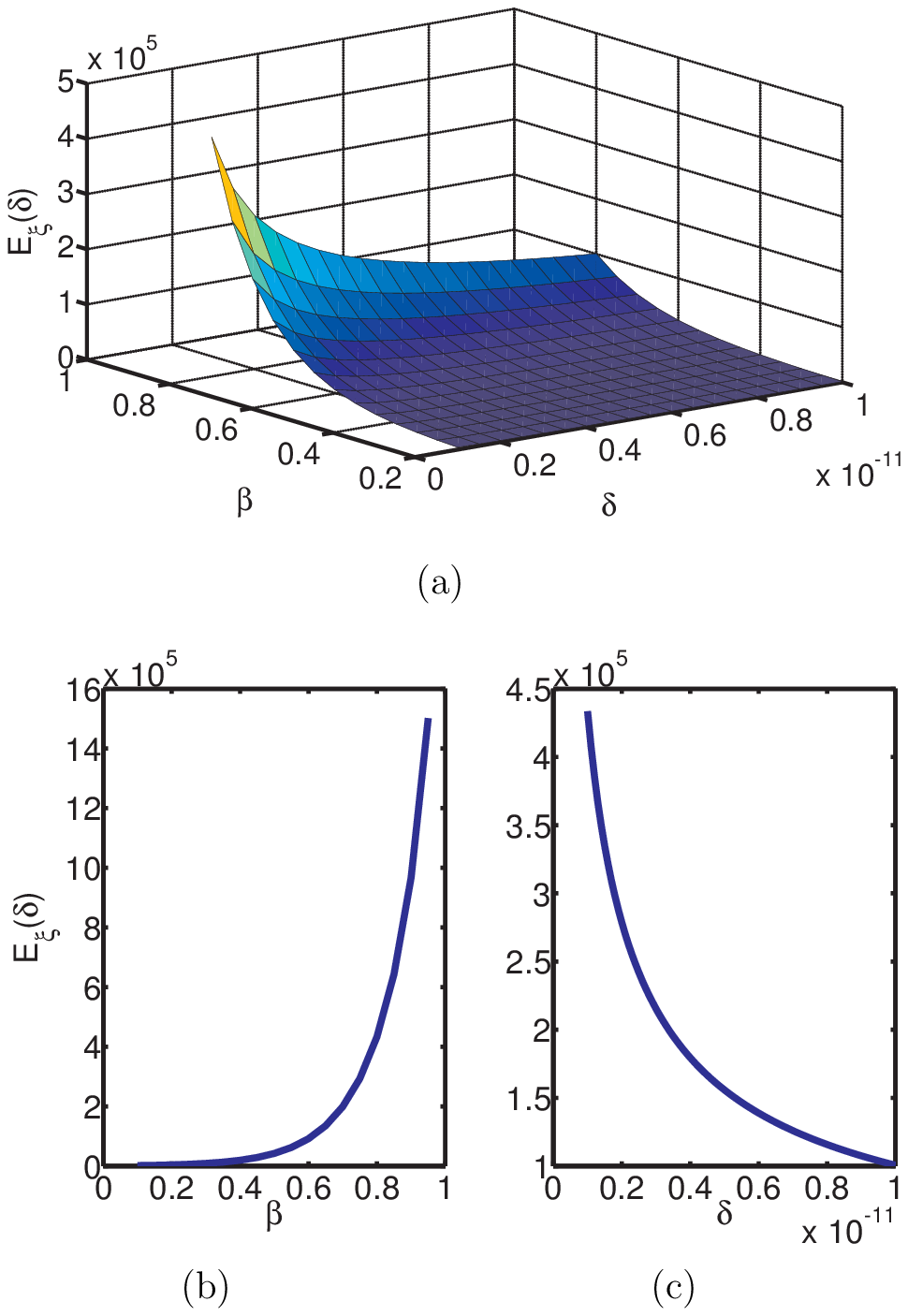}
    \end{center}
    \caption[Circular Charge Density Blow-Up]
    {(Circular $\rho$) In all of these subfigures we take $a =
    d_1/\tau(\beta)$ so $\rho$ is completely within the region of
    influence. (a) A plot of $E_{\xi}(\delta)$ versus $\beta$ and
    $\delta$ --- the $z$\nobreakdash -axis scale is $10^5$; (b) a plot
    of $E_{\xi}(\delta)$ for $\delta = 10^{-12}$ as a function of
    $\beta$ --- the $y$\nobreakdash -axis scale is $10^5$; (c) a plot of
    $E_{\xi}(\delta)$ for $\beta = 0.8$ as a function of $\delta$ ---
    the $y$\nobreakdash-axis scale is $10^5$.}%
    \label{fig:circle_blow_up}
\end{figure}

In Figure~\ref{fig:circle_blow_up} we plot $E_{\xi}(\delta)$ as a
function of $\beta$ and $\delta$ for the circular charge distribution
discussed above. We assume $\rho$ is centered at $(6,6)$ so $d_0 = 5$
and $d_1 = 7$ as in the rectangular case. The only other difference
between Figures~\ref{fig:circle_blow_up}~and~\ref{fig:rectangle_blow_up}
are the values of $\delta$ we used to construct the plots.  

Again we note that $\rho$ need not be completely within the region of
influence for the limit superior of the power dissipation to blow-up as
the dissipation in the lens goes to 0. In particular, according to the
above analysis, $\rho$ only needs to be slightly more than halfway
inside the region of influence for the blow-up to occur. However,
numerical results seem to indicate that the power dissipation due to
this charge density distribution blows up even if $\rho$ is just inside
the region of influence (as is the case for the rectangular charge
density distribution analyzed in Section~\ref{subsubsec:rectangle}).

%%%%%%%%%%%%%%%%%%%%%%%%%%%%%%%%%%%%%%%%%%%%%%%%%%%%%%%%%%%%%%%%%%%%%%%%
%%%%%%%%%%%%%%%%%%%%%%%%%%%%%%%%%%%%%%%%%%%%%%%%%%%%%%%%%%%%%%%%%%%%%%%%

\section{Upper Bound Power Dissipation}\label{sec:bounded_pd}

In this section, we discuss what happens when $d_0 > \tau(\beta)a \ge
(3/2)a$. Recall that $\rho$ has compact support, so $\supp (\rho)
\subseteq [d_0,d_1]\times[h_0,h_1]$ for some constants $h_0 < h_1$.  
The power dissipation is given \emph{exactly} by
\begin{equation*}%\label{eq:rewrite_E}
    E_{\xi}(\delta) =   \displaystyle\int_{k>0} F\dd k ;
\end{equation*}
see \eqref{eq:full_pd} and \eqref{eq:F}--\eqref{eq:L}. We will now prove
a series of lemmas that will lead to an upper bound on
$E_{\xi}(\delta)$.    
\begin{lemma}\label{lem:g2bounded}
    Suppose $\beta > 0$ and $\lambda$ is feasible, and let $k_0(\delta)$
    be defined as in \eqref{eq:k0}. Then for every $0 < \delta \le
    \delta_{0}$
    \begin{equation*}%\label{eq:lb_g}
        |g|^2 \ge 
        \begin{cases} 
            9\ee^{-4ka}\dfrac{\delta^2}{\left(2\delta^2
                +\lambda\delta^{\beta+1}\right)^2} 
                &\text{for}\ 0 \le k \le k_0(\delta), \myspace
            \ee^{-ka} \dfrac{\delta^2}
                {\left(2\delta^2+\lambda\delta^{\beta+1}\right)
                ^{\frac{1}{2}}} &\text{for}\ k \ge k_0(\delta).
        \end{cases}
    \end{equation*}
\end{lemma}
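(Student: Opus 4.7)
The plan is to work directly from the explicit identity \eqref{eq:mod_g} for $|g|^2$, discard the (nonnegative) second squared term inside the braces, and analyze the first squared term in the two regimes separately, exploiting the defining relation $e^{-2k_0(\delta) a} = 2\delta^2 + \lambda\delta^{\beta+1}$ to compare $e^{-2ka}$ with the denominator $2\delta^2 + \lambda\delta^{\beta+1}$.

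First I would observe that, for $0 < \delta \le \delta_\mu$, the feasibility of $\lambda$ guarantees $\mu \ge 0$, which makes $2\delta^2 + \lambda\delta^{\beta+1} = \delta(\delta + \mu) > 0$; also $4 + \lambda\delta^{\beta+1} > 0$ once $\delta$ is small enough. Consequently the quantity
\[
    A(k,\delta) \equiv \frac{4 + \lambda\delta^{\beta+1}}{2\delta^2 + \lambda\delta^{\beta+1}}\, e^{-2ka}
\]
is nonnegative, and \eqref{eq:mod_g} yields $|g|^2 \ge \delta^2 (1 + A)^2$.

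For the regime $0 \le k \le k_0(\delta)$, I would use the trivial bound $(1+A)^2 \ge A^2$ together with the fact that for $\delta$ small enough we have $4 + \lambda\delta^{\beta+1} \ge 3$. This gives
\[
    |g|^2 \ge \delta^2 \left(\frac{3\, e^{-2ka}}{2\delta^2 + \lambda\delta^{\beta+1}}\right)^2 = 9\, e^{-4ka}\, \frac{\delta^2}{(2\delta^2 + \lambda\delta^{\beta+1})^2},
\]
which is the first claim. For the regime $k \ge k_0(\delta)$, I would instead use $(1+A)^2 \ge 1$, so $|g|^2 \ge \delta^2$. The definition of $k_0(\delta)$ in \eqref{eq:k0} gives $e^{-2ka} \le e^{-2 k_0(\delta) a} = 2\delta^2 + \lambda\delta^{\beta+1}$ for $k \ge k_0(\delta)$; taking square roots yields $e^{-ka} \le (2\delta^2 + \lambda\delta^{\beta+1})^{1/2}$, so
\[
    \delta^2 \ge e^{-ka}\, \frac{\delta^2}{(2\delta^2 + \lambda\delta^{\beta+1})^{1/2}},
\]
which is the second claim.

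There is no real obstacle here, only bookkeeping: the only subtlety is keeping track of the smallness of $\delta$ required to make $4+\lambda\delta^{\beta+1} \ge 3$ uniformly (which depends on $\beta$ and $\lambda$ but not on $k$), so the stated threshold $\delta_0(\beta,\lambda)$ from the discussion preceding \eqref{eq:k0_pd} (or a slight shrinkage of it) suffices for both bounds to hold simultaneously. Everything else follows from discarding nonnegative terms in \eqref{eq:mod_g} and applying the defining relation $e^{-2k_0 a} = 2\delta^2 + \lambda\delta^{\beta+1}$ on each side of $k_0(\delta)$.
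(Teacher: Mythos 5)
Your proposal is correct and follows essentially the same route as the paper's proof: drop the nonnegative second squared term in \eqref{eq:mod_g}, bound $(1+A)^2$ from below by $A^2$ on $[0,k_0(\delta)]$ (using $4+\lambda\delta^{\beta+1}\ge 3$) and by $1$ on $[k_0(\delta),\infty)$, and convert $\delta^2$ into the desired form via $\ee^{-2k_0(\delta)a}=2\delta^2+\lambda\delta^{\beta+1}$. The only small difference is that the paper makes explicit that $\delta\le\delta_0\le\delta_\mu<1$ already forces $4+\lambda\delta^{\beta+1}\ge 4-\delta^2\ge 3$, so no further shrinkage of $\delta_0$ is needed.
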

\begin{proof}
    From \eqref{eq:mod_g} we have
    \begin{align}
        |\gk|^2 &= \delta^2
            \left\{\left(1 + \dfrac{4+\lambda\delta^{\beta+1}}
            {2\delta^2+\lambda\delta^{\beta+1}}\ee^{-2ka}\right)^2 
            + \left[\dfrac{2(\delta-\lambda\delta^{\beta})}{2\delta^2
            +\lambda\delta^{\beta+1}}\ee^{-2ka}\right]^2\right\} 
            \nonumber\\
        &\ge \delta^2\left(1 + \dfrac{4+\lambda\delta^{\beta+1}}
            {2\delta^2+\lambda\delta^{\beta+1}}\ee^{-2ka}\right)^2. 
            \label{eq:intermediate_g}
    \end{align}
    For $0 < \delta \le \delta_0 \le \delta_{\mu} < 1$ (which implies
    $\mu = \delta + \lambda \delta^{\beta} \ge 0$) we have
    $4+\lambda\delta^{\beta+1} \ge 4-\delta^2 \ge 4-\delta_{\mu}^2 \ge
    3$. Then, from \eqref{eq:intermediate_g}, for fixed $\delta \le
    \delta_{0}$, and for all $k \in \mathbb{R}$ we have
    \begin{equation*}
        |\gk|^2 \ge \delta^2
            \left(\dfrac{3}{2\delta^2 + \lambda\delta^{\beta+1}}
            \ee^{-2ka}\right)^2 = 9\ee^{-4ka}\dfrac{\delta^2}
            {\left(2\delta^2+\lambda\delta^{\beta+1}\right)^2}.
    \end{equation*}
    In particular this bound holds for $0 \le k \le k_0(\delta)$.
    
    To prove the second part of the lemma we note
    \eqref{eq:intermediate_g} implies $|\gk|^2 \ge \delta^2$ when $0 <
    \delta \le \delta_{\mu}$. If $k \ge k_0(\delta)$ holds as well we
    have
    \begin{equation*}
        \ee^{-ka} \dfrac{\delta^2}{\left(2\delta^2
            +\lambda\delta^{\beta+1}\right)^{\frac{1}{2}}} 
            \le \ee^{-k_0(\delta)a} \dfrac{\delta^2}
            {\left(2\delta^2+\lambda\delta^{\beta+1}\right)
            ^{\frac{1}{2}}} 
            = \delta^2 \le |\gk|^2.
    \end{equation*}
\end{proof}
Combining the computations from
Lemmas~\ref{lem:Ik_properties}~and~\ref{lem:g2bounded} we find, for $0 <
\delta \le \delta_{0}$, that \eqref{eq:full_pd} implies
\begin{alignat*}{4}
    &E_{\xi}(\delta) \, &\le \, &\dfrac{\delta}{\pi} 
        \int_{0}^{k_0(\delta)} 
        \dfrac{(d_1-d_0)\left\|\rho\right\|^2_{L^2(\mathcal{M})} 
        \ee^{-2kd_0}\ee^{4ka}\left(2\delta^2+\lambda\delta^{\beta+1}
        \right)^2}{9k\delta^2} \ee^{2ka}L \dd k \\*
    &&&+ \dfrac{\delta}{\pi} \int_{k_0(\delta)}^{\infty} 
        \dfrac{(d_1-d_0)\left\|\rho\right\|^2_{L^2(\mathcal{M})} 
        \ee^{-2kd_0} \ee^{ka} 
        \left(2\delta^2+\lambda\delta^{\beta+1}\right)^{\frac{1}{2}}}
        {k\delta^2} \ee^{2ka}L \dd k 
        \\
    &\,&= \, &C_5\delta \int_{0}^{k_0(\delta)} 
        \dfrac{\ee^{-2k(d_0-3a)}}{k}
        \left(2\delta+\lambda\delta^{\beta}\right)^2 L \dd k \\
    &&&+ 9C_5\delta^{-\frac{1}{2}} \int_{k_0(\delta)}^{\infty} 
        \dfrac{\ee^{-2k(d_0-\frac{3}{2}a)}}{k} \left(2\delta
        +\lambda\delta^{\beta}\right)^{\frac{1}{2}} 
        L\dd k,
\end{alignat*}
where 
\begin{equation*}%\label{eq:C_5}
    C_5 \equiv \dfrac{(d_1-d_0)
        \left\|\rho\right\|^2_{L^2(\mathcal{M})}}{9\pi}.
\end{equation*}
Using \eqref{eq:L} we can rewrite the above upper bound as
\begin{equation}\label{eq:lb_pd_T}
    E_{\xi}(\delta) \le T_1 + T_2 + T_3 + T_4, 
\end{equation}
where 
\begin{subequations}\label{eq:Ts}
    \begin{align}
        T_1 &\equiv 
            C_5\delta(2\delta+\lambda\delta^{\beta})^2
            \int_{0}^{k_0(\delta)}\ee^{-2k(d_0-3a)}
            \left(\dfrac{1-\ee^{-2k\xi}}{k}\right) \dd k;
            \label{eq:T_1}\\
        T_2 &\equiv 
            C_5\delta(\lambda^2\delta^{2\beta}+4)\int_{0}^{k_0(\delta)}
            \ee^{-2k(d_0-3a)}\ee^{-4ka}
            \left(\dfrac{\ee^{2k\xi}-1}{k}\right) \dd k; 
            \label{eq:T_2}\\
        T_3 &\equiv 
            9C_5\delta^{-\frac{1}{2}}(2\delta+\lambda
            \delta^{\beta})^{\frac{1}{2}}\int_{k_0(\delta)}^{\infty} 
            \ee^{-2k(d_0-\frac{3}{2}a)}
            \left(\dfrac{1-\ee^{-2k\xi}}{k}\right)\dd k;\label{eq:T_3}\\
        T_4 &\equiv 
            9C_5\delta^{-\frac{1}{2}}(2\delta+\lambda
            \delta^{\beta})^{-\frac{3}{2}}(\lambda^2\delta^{2\beta}+4)
            \int_{k_0(\delta)}^{\infty}\ee^{-2k(d_0-\frac{3}{2}a)}
            \ee^{-4ka}\left(\dfrac{\ee^{2k\xi}-1}{k}\right)\dd k.
            \label{eq:T_4}
    \end{align}
\end{subequations}
We derive estimates of these integrals in the next four lemmas. Recall
that $0 < \delta_0 \le \delta_{\mu}$ is such that $k_0(\delta) > 0$ for
$0 < \delta \le \delta_0$; we will assume $0 < \delta \le \delta_0$ for
the remainder of this section.
\begin{lemma}\label{lem:T_1}
    Suppose $\beta > 0$, $\lambda$ is feasible, $0 < \xi < a$, and $d_0
    \ge \tau(\beta)a$. Then 
    \begin{equation*}%\label{eq:T1_limit}
        \ds\lim_{\delta \rightarrow 0^+} T_1 = 
        \begin{cases}
            C_6\lambda^{[2+(d_0-3a)/a]} &\text{if} \ 0 < \beta < 1 
                \ \text{and} \ d_0 = \tau(\beta)a, \\
            C_6(2+\lambda)^{[2+(d_0-3a)/a]} &\text{if} \ \beta = 1 
                \ \text{and} \ d_0 = \tau(\beta)a, \\
            C_62^{[2+(d_0-3a)/a]} &\text{if} \ \beta > 1 
                \ \text{and} \ d_0 = \tau(\beta)a, \\
            0 &\text{if} \ d_0 > \tau(\beta)a,
        \end{cases}
    \end{equation*}
    where
    \begin{equation*}%\label{eq:C_6}
        C_6 = \xi C_5(d_0-3a)^{-1}.
    \end{equation*}
\end{lemma}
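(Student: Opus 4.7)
The plan is to estimate the integral in $T_1$ by the elementary pointwise inequality $(1-\ee^{-2k\xi})/k \leq 2\xi$ valid for $k > 0$ (which follows from the Mean Value Theorem applied to $k \mapsto 1-\ee^{-2k\xi}$), reducing the $k$-integral to the elementary exponential integral $\int_0^{k_0(\delta)} \ee^{-2k(d_0-3a)}\,\dd k$. This antiderivative equals $[1-\ee^{-2k_0(\delta)(d_0-3a)}]/[2(d_0-3a)]$ when $d_0 \neq 3a$ and $k_0(\delta)$ when $d_0 = 3a$, so everything reduces to elementary calculus.

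Next I would apply the defining identity $\ee^{-2k_0(\delta)a} = 2\delta^2 + \lambda\delta^{\beta+1}$ from \eqref{eq:k0} to rewrite $\ee^{-2k_0(\delta)(d_0-3a)}$ as $(2\delta^2+\lambda\delta^{\beta+1})^{(d_0-3a)/a}$. Using $\delta(2\delta+\lambda\delta^\beta)^2 = (2\delta^2+\lambda\delta^{\beta+1})^2/\delta$ together with the identity $2+(d_0-3a)/a = (d_0-a)/a$ consolidates the bound into
\[
T_1 \leq \frac{\xi C_5}{d_0-3a}\left[\frac{(2\delta^2+\lambda\delta^{\beta+1})^2}{\delta} - \frac{(2\delta^2+\lambda\delta^{\beta+1})^{(d_0-a)/a}}{\delta}\right],
\]
with the obvious logarithmic modification when $d_0 = 3a$. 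For $d_0 > 3a$ the exponential factor vanishes and the first term is $O(\delta^3)$, so $T_1 \to 0$; for $d_0 = 3a$ the $k$-integral grows only as $k_0(\delta) = O(|\ln\delta|)$, still dominated by the $\delta^3$ prefactor.

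The delicate range is $\tau(\beta)a \leq d_0 < 3a$, where the surviving term is the second one. Here I would expand $2\delta^2+\lambda\delta^{\beta+1}$ to leading order in each regime: namely $\lambda\delta^{\beta+1}$ for $0<\beta<1$, exactly $(2+\lambda)\delta^2$ for $\beta=1$, and $2\delta^2$ for $\beta>1$. Letting $B$ denote $\lambda$, $2+\lambda$, or $2$ in these three cases, substitution yields the leading-order expression $C_6 \cdot B^{(d_0-a)/a} \cdot \delta^{p(\beta,d_0)}$, where $p(\beta,d_0) = (\beta+1)(d_0/a) - (\beta+2)$ for $0 < \beta < 1$ and $p(\beta,d_0) = 2(d_0/a) - 3$ for $\beta \geq 1$. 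From the definition of $\tau(\beta)$ in \eqref{eq:tau}, one checks $p(\beta,d_0) = 0$ precisely when $d_0 = \tau(\beta)a$ and $p(\beta,d_0) > 0$ when $d_0 > \tau(\beta)a$; this matches the four cases of the lemma.

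The main obstacle is justifying that the elementary pointwise bound $(1-\ee^{-2k\xi})/k \leq 2\xi$ captures the correct leading-order constant rather than a loose upper bound, since the inequality saturates at $k = 0$ whereas the integrand concentrates near $k = k_0(\delta)$. One way to obtain the exact stated value at the critical distance is to replace the pointwise inequality by the sharper asymptotic $\int_0^{k_0(\delta)} \ee^{2k(3a-d_0)} \,\dd k \sim \ee^{2k_0(\delta)(3a-d_0)}/[2(3a-d_0)]$ as $\delta \to 0^+$ (via standard exponential-integral estimates) and track the constants through; then the case $d_0 > \tau(\beta)a$ follows routinely from positivity of the exponent $p(\beta,d_0)$.
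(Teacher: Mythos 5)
Your proposal reproduces the paper's argument step by step: the pointwise bound $(1-\ee^{-2k\xi})/k\le 2\xi$, explicit evaluation of $\int_0^{k_0(\delta)}\ee^{-2k(d_0-3a)}\,\dd k$, the substitution $\ee^{-2k_0(\delta)a}=2\delta^2+\lambda\delta^{\beta+1}$ from \eqref{eq:k0}, the split into $0<\beta<1$ and $\beta\ge1$, and the separate logarithmic treatment of $d_0=3a$. So in substance the approach is the same as the paper's.

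Your closing paragraph correctly diagnoses a real issue that the paper's own proof shares: replacing $(1-\ee^{-2k\xi})/k$ by $2\xi$ yields only an upper bound on $T_1$, hence at best $\limsup_{\delta\to0^+}T_1\le(\text{stated constants})$, not the claimed equality. Your observation that the integrand concentrates near $k=k_0(\delta)$, where $(1-\ee^{-2k\xi})/k\approx 1/k\ll 2\xi$, is exactly the right objection --- a more careful asymptotic picks up an extra factor of order $1/k_0(\delta)\sim 1/|\ln\delta|$, so $T_1$ in fact tends to zero even at the critical distance $d_0=\tau(\beta)a$. However, the fix you sketch --- replacing $\int_0^{k_0}\ee^{2k(3a-d_0)}\,\dd k$ by its exact exponential-integral asymptotic --- does not close this gap, because the loss occurs at the earlier step of discarding $(1-\ee^{-2k\xi})/k$ in favor of $2\xi$, which your replacement retains. (There is also a sign subtlety: when $d_0<3a$, the constant $C_6=\xi C_5(d_0-3a)^{-1}$ is negative, so the nonnegative quantity $T_1$ cannot have limit $C_6\lambda^{2+(d_0-3a)/a}$; the surviving term in the upper bound actually carries a compensating minus sign, a slip present in the paper and reproduced in your leading-order expression $C_6\,B^{(d_0-a)/a}\delta^{p}$.) None of this affects the downstream application: only the case $d_0>\tau(\beta)a$ is used in Theorem~\ref{thm:bounded}, and there both you and the paper establish $\lim_{\delta\to0^+}T_1=0$ rigorously by squeezing $0\le T_1\le(\text{upper bound})\to 0$.
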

\begin{proof}
    We begin by noting that \eqref{eq:tau} implies that $(3/2)a \le
    \tau(\beta)a < 2a$ for all $\beta > 0$. Next, the function
    $k^{-1}(1-\ee^{-2k\xi})$ tends to 0 as $k$ goes to infinity and is
    continuous and decreasing for $k \in [0,\infty)$ as long as we
    define it to be equal to $2\xi$ at $k = 0$. Thus
    $k^{-1}(1-\ee^{-2k\xi}) \le 2\xi$ for all $k\ge0$. If $d_0 \ne 3a$,
    then this implies
    \begin{align}
        T_1
        &\le 2\xi C_5\delta(2\delta+\lambda\delta^{\beta})^2
            \int_{0}^{k_0(\delta)}\ee^{-2k(d_0-3a)} \dd k 
            \label{eq:T1_first_intermediate}\\
        &= \dfrac{2\xi C_5}{2(d_0-3a)}
            \delta(2\delta+\lambda\delta^{\beta})^2
            \left[1-\ee^{-2k_0(\delta)(d_0-3a)}\right] \nonumber \\
        &= C_6\delta(2\delta+\lambda\delta^{\beta})^2 
            - C_6\delta(2\delta+\lambda\delta^{\beta})^2
            \ee^{-2k_0(\delta)(d_0-3a)}. \label{eq:T1_intermediate}
    \end{align}
    The first term in \eqref{eq:T1_intermediate} goes to 0 as $\delta
    \rightarrow 0^+$. The second term is equal to
    \begin{equation}\label{eq:T1_second_term}
        C_6\delta(2\delta+\lambda\delta^{\beta})^2(2\delta^2
            +\lambda\delta^{\beta+1})^{(d_0-3a)/a}.
    \end{equation}
    
    If $0 < \beta < 1$ we rewrite this as
    \[
        C_6(2\delta^{1-\beta}+\lambda)^2(2\delta^{1-\beta}
            +\lambda)^{(d_0-3a)/a}
            \delta^{\left[1+2\beta+(\beta+1)(d_0-3a)/a\right]}. 
    \]
    This expression goes to 0 as $\delta\rightarrow 0^+$ if and only if
    \[ 
        1+2\beta+(\beta+1)\left(\frac{d_0-3a}{a}\right) > 0 
            \Leftrightarrow d_0 > \left(\dfrac{\beta+2}{\beta+1}\right)a
            = \tau(\beta)a,
    \]
    and it goes to $C_6\lambda^{[2+(d_0-3a)/a]}$ as $\delta\rightarrow
    0^+$ if and only if $d_0 = \tau(\beta)a$.
    
    If $\beta \ge 1$ we rewrite \eqref{eq:T1_second_term} as
    \[
        C_6(2+\lambda\delta^{\beta-1})^2
            (2+\lambda\delta^{\beta-1})^{(d_0-3a)/a}
            \delta^{[3+2(d_0-3a)/a]}.
    \]
    This term goes to 0 as $\delta\rightarrow 0^+$ if and only if
    \[
        3+2(d_0-3a)/a > 0\Leftrightarrow d_0 >\frac{3}{2}a=\tau(\beta)a,
    \]
    and if $d_0 = \tau(\beta)a$ it goes to $C_62^{[2+(d_0-3a)/a]}$ if
    $\beta > 1$ and $C_6(2+\lambda)^{[2+(d_0-3a)/a]}$ if $\beta = 1$.  

    If $d_0 = 3a$, then from \eqref{eq:T1_first_intermediate} we have
    \[
        T_1 \le 2\xi C_5
            \delta(2\delta+\lambda\delta^{\beta})^2k_0(\delta) 
            = a^{-1}\xi C_5\delta(2\delta+\lambda\delta^{\beta})^2
            \ln\left(\dfrac{1}
            {2\delta^2+\lambda\delta^{\beta+1}}\right);
    \]
    this expression goes to $0$ as $\delta \rightarrow 0^+$ for all 
    $\beta > 0$.
\end{proof}
\begin{lemma}\label{lem:T_2}
    Suppose $\beta > 0$, $\lambda$ is feasible, $0 < \xi < a/2$,
    and $d_0 \ge \tau(\beta)a$. Then 
    \begin{equation*}%\label{eq:T2_limit}
        \ds\lim_{\delta \rightarrow 0^+} T_2=0.
    \end{equation*}
\end{lemma}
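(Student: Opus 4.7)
The plan is to exploit the fact that the two exponential factors in the integrand of \eqref{eq:T_2} combine to produce extra decay. First, I would observe that $\ee^{-2k(d_0-3a)}\ee^{-4ka} = \ee^{-2k(d_0-a)}$ and rewrite $T_2$ as
\[
T_2 = C_5\delta(\lambda^2\delta^{2\beta}+4)\int_{0}^{k_0(\delta)}\frac{\ee^{-2k(d_0-a-\xi)} - \ee^{-2k(d_0-a)}}{k}\dd k.
\]
Under the hypotheses $d_0 \ge \tau(\beta)a$ and $\xi < a/2$, together with $\tau(\beta) \ge 3/2$ for every $\beta > 0$ (immediate from \eqref{eq:tau}), both decay rates $d_0 - a$ and $d_0 - a - \xi$ are strictly positive.

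Next, I would bound the rewritten integral by a constant independent of $\delta$. The integrand is nonnegative, is continuous on $(0,\infty)$ with a removable singularity at $k=0$ (a Taylor expansion gives the limiting value $2\xi$), and decays exponentially at infinity. Enlarging the integration domain from $(0,k_0(\delta))$ to $(0,\infty)$ and recognizing a Frullani integral yields
\[
\int_{0}^{k_0(\delta)}\frac{\ee^{-2k(d_0-a-\xi)} - \ee^{-2k(d_0-a)}}{k}\dd k \le \ln\left(\frac{d_0-a}{d_0-a-\xi}\right) \equiv C_7 < \infty.
\]

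The conclusion would then follow from combining these two observations: $T_2 \le C_5 C_7\, \delta(\lambda^2\delta^{2\beta}+4)$, and the right-hand side tends to $0$ as $\delta \to 0^+$ because $\beta > 0$ forces $\lambda^2\delta^{2\beta} \to 0$. I do not anticipate a serious obstacle here; in contrast to $T_1$ (and the forthcoming $T_3$, $T_4$), the additional factor $\ee^{-4ka}$ in $T_2$ provides enough decay to make the integral uniformly bounded in $\delta$, so no case analysis in $\beta$ or threshold behavior at $d_0 = \tau(\beta)a$ is needed. The only point requiring mild care is checking that the strictly tighter assumption $\xi < a/2$ (versus $\xi < a$ in Lemma~\ref{lem:T_1}) is precisely what keeps $d_0 - a - \xi$ positive in the worst case $d_0 = (3/2)a$ which occurs when $\beta \ge 1$.
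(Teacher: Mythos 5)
Your proof is correct and follows essentially the same approach as the paper: combine the two exponential factors into $\ee^{-2k(d_0-a)}$, enlarge the integration domain to $(0,\infty)$ using nonnegativity of the integrand, observe that $\xi < a/2$ and $d_0 \ge \tau(\beta)a \ge (3/2)a$ make the resulting integral converge, and conclude via the prefactor $\delta(\lambda^2\delta^{2\beta}+4) \to 0$. The only difference is that you explicitly evaluate the $\delta$-independent integral as a Frullani integral, $\ln\left[(d_0-a)/(d_0-a-\xi)\right]$, whereas the paper merely observes it is a finite positive constant $C$; this adds precision but is not needed for the conclusion. (One small note: the factor $\delta$ alone kills the bound as $\delta\to 0^+$ since $\lambda^2\delta^{2\beta}+4\to 4$; phrasing it as "$\beta>0$ forces $\lambda^2\delta^{2\beta}\to 0$" puts the emphasis in an odd place. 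Also, the constant you call $C_7$ collides with a different $C_7$ already defined in Lemma~\ref{lem:T_3}.)
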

\begin{proof}
    We begin by noting that the function $k^{-1}(\ee^{2k\xi}-1)$ is
    continuous for $k \in [0,\infty)$ if we define it to be equal to
    $2\xi$ at $k = 0$. Also, since $d_0 \ge \tau(\beta)a \ge
    (3/2)a$, we have $\ee^{-2k(d_0-3a)}\ee^{-4ka} \le \ee^{-ka}$
    for all $k \ge 0$. This implies the integral
    \[
        \ds\int_{0}^{\infty} \ee^{-2k(d_0-3a)}\ee^{-4ka}
            \left(\dfrac{\ee^{2k\xi}-1}{k}\right) \dd k 
    \]
    converges to a positive constant $C$ as long as $0 < \xi <
    a/2$. Then \eqref{eq:T_2} implies that 
    \[
        T_2 \le CC_5\delta(\lambda^2\delta^{2\beta}+4) \rightarrow 0
        \quad \text{as} \ \delta \rightarrow 0^+.
    \]  
\end{proof}
\begin{lemma}\label{lem:T_3}
    Suppose $\beta > 0$, $\lambda$ is feasible, $0 < \xi < a$, and $d_0
    > (3/2)a$. Then
    \begin{equation*}%\label{eq:T3_limit}
        \ds\lim_{\delta \rightarrow 0^+} T_3 = 
        \begin{cases}
            C_7\lambda^{[\frac{1}{2}+(d_0-\frac{3}{2}a)/a]} 
                &\text{if} \ 0 < \beta < 1 \ \text{and}\ d_0 
                = \tau(\beta)a, \\
            0 &\text{if} \ d_0 > \tau(\beta)a,
        \end{cases}
    \end{equation*}
    where
    \begin{equation*}%\label{eq:C_6}
        C_7 = \dfrac{9C_5\xi}{d_0-\frac{3}{2}a} > 0.
    \end{equation*}
\end{lemma}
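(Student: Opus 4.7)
\smallskip

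\noindent\emph{Proof proposal.} My plan is to obtain an explicit upper bound on $T_3$ by a single direct integration, then perform an asymptotic analysis that mirrors the case split in Lemma~\ref{lem:T_1}. The elementary but useful ingredient is the inequality $(1-\ee^{-2k\xi})/k \le 2\xi$, valid for all $k > 0$ (since $1 - \ee^{-t} \le t$). Applying this bound inside the integral in \eqref{eq:T_3} gives
\begin{equation*}
    T_3 \le 18\,C_5\,\xi\,\delta^{-\frac{1}{2}}\,(2\delta+\lambda\delta^{\beta})^{\frac{1}{2}} \int_{k_0(\delta)}^{\infty} \ee^{-2k(d_0-\frac{3}{2}a)}\dd k.
\end{equation*}
Since $d_0 > (3/2)a$ by hypothesis, the remaining integral evaluates to $\frac{1}{2(d_0-\frac{3}{2}a)}\ee^{-2k_0(\delta)(d_0-\frac{3}{2}a)}$, and from \eqref{eq:k0} we have $\ee^{-2k_0(\delta)a} = 2\delta^2 + \lambda\delta^{\beta+1}$, hence
\begin{equation*}
    T_3 \le C_7\,\delta^{-\frac{1}{2}}(2\delta+\lambda\delta^{\beta})^{\frac{1}{2}}\bigl(2\delta^2+\lambda\delta^{\beta+1}\bigr)^{(d_0-\frac{3}{2}a)/a}.
\end{equation*}

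Next I would factor $2\delta^2+\lambda\delta^{\beta+1} = \delta(2\delta+\lambda\delta^{\beta})$ to consolidate everything in terms of a single small factor $2\delta+\lambda\delta^{\beta}$ and a power of $\delta$:
\begin{equation*}
    T_3 \le C_7\,\delta^{\,-\frac{1}{2}+(d_0-\frac{3}{2}a)/a}\,\bigl(2\delta+\lambda\delta^{\beta}\bigr)^{\frac{1}{2}+(d_0-\frac{3}{2}a)/a}.
\end{equation*}
This is now exactly of the form encountered in the proof of Lemma~\ref{lem:T_1}, so the asymptotic analysis proceeds by the same case split on $\beta$ (and with the same care about which term dominates $2\delta+\lambda\delta^{\beta}$).

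For $0 < \beta < 1$, the feasibility hypothesis forces $\lambda > 0$, so $2\delta+\lambda\delta^{\beta} \sim \lambda\delta^{\beta}$ as $\delta \to 0^+$; the total exponent of $\delta$ in the bound becomes $-\tfrac{1}{2}+(d_0-\tfrac{3}{2}a)/a + \beta\bigl[\tfrac{1}{2}+(d_0-\tfrac{3}{2}a)/a\bigr] = \tfrac{\beta-1}{2} + (1+\beta)(d_0-\tfrac{3}{2}a)/a$, and a one-line algebraic rearrangement shows that this quantity equals $0$ precisely when $d_0 = \tfrac{\beta+2}{\beta+1}a = \tau(\beta)a$ and is strictly positive for $d_0 > \tau(\beta)a$. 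Evaluating at $d_0 = \tau(\beta)a$ yields the limit $C_7\lambda^{\frac{1}{2}+(d_0-\frac{3}{2}a)/a}$ stated in the lemma, while $d_0 > \tau(\beta)a$ produces a strictly positive power of $\delta$ and hence $T_3 \to 0$.

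For $\beta \ge 1$, the hypothesis $d_0 > (3/2)a = \tau(\beta)a$ must hold strictly. Here $2\delta+\lambda\delta^{\beta} \sim 2\delta$ (bounded above by a constant multiple of $\delta$ in any case), so the upper bound behaves like a constant times $\delta^{-\frac{1}{2}+(d_0-\frac{3}{2}a)/a}\cdot \delta^{\frac{1}{2}+(d_0-\frac{3}{2}a)/a} = \delta^{2(d_0-\frac{3}{2}a)/a}$, which tends to zero since $d_0 > (3/2)a$. I expect no real obstacle in this argument; the only thing requiring care is keeping track of which term in $2\delta+\lambda\delta^{\beta}$ dominates in each regime (including the sub-case $\beta = 1$ where both terms are of the same order but the sum is still of order $\delta$ since $\lambda \ge -1$), exactly as was handled in the proof of Lemma~\ref{lem:T_1}.
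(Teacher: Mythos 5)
Your proposal is correct and follows essentially the same route as the paper's proof: bound $(1-\ee^{-2k\xi})/k \le 2\xi$, integrate the remaining exponential to obtain $T_3 \le C_7\,\delta^{-1/2}(2\delta+\lambda\delta^{\beta})^{1/2}(2\delta^2+\lambda\delta^{\beta+1})^{(d_0-\frac{3}{2}a)/a}$, and then analyze the $\delta$-exponent in the two $\beta$-regimes, with the critical exponent vanishing exactly at $d_0 = \tau(\beta)a$. The only difference is cosmetic: you factor $2\delta^2+\lambda\delta^{\beta+1}=\delta(2\delta+\lambda\delta^{\beta})$ up front to consolidate the bound into one expression before splitting on $\beta$, whereas the paper factors $\delta^{\beta}$ or $\delta$ out of $(2\delta+\lambda\delta^{\beta})$ separately within each regime; the resulting exponent arithmetic and conclusions are identical.
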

\begin{proof}
    As in the proof of Lemma~\ref{lem:T_1} we have
    $k^{-1}(1-\ee^{-2k\xi}) \le 2\xi$ for all $k \ge 0$. Thus
    \eqref{eq:T_3} implies
    \begin{align}
        T_3 
        &\le 18C_5\xi\delta^{-\frac{1}{2}}(2\delta
            +\lambda\delta^{\beta})^{\frac{1}{2}}
            \int_{k_0(\delta)}^{\infty} 
            \ee^{-2k(d_0-\frac{3}{2}a)} \dd k \nonumber \\
        &= \dfrac{18C_5\xi}{2(d_0-\frac{3}{2}a)}\delta^{-\frac{1}{2}}
            (2\delta+\lambda\delta^{\beta})^{\frac{1}{2}}
            \left[-\left.\ee^{-2k(d_0-\frac{3}{2}a)}
            \right|_{k_0(\delta)}^{\infty}\right] \nonumber \\
        &= C_7\delta^{-\frac{1}{2}}
            (2\delta+\lambda\delta^{\beta})^{\frac{1}{2}}
            \ee^{-2k_0(\delta)(d_0-\frac{3}{2}a)} \nonumber \\
        &= C_7\delta^{-\frac{1}{2}}
            (2\delta+\lambda\delta^{\beta})^{\frac{1}{2}}
            (2\delta^2+\lambda\delta^{\beta+1})^{(d_0-\frac{3}{2}a)/a}. 
            \label{eq:T3_intermediate}
    \end{align}
    
    If $0 < \beta < 1$, note that $\tau(\beta)a > (3/2)a$ --- this
    implies that the above analysis holds as long as $d_0 \ge
    \tau(\beta)a$. We rewrite \eqref{eq:T3_intermediate} as
    \[
        C_7(2\delta^{1-\beta}+\lambda)^{\frac{1}{2}}
        (2\delta^{1-\beta}+\lambda)^{(d_0-\frac{3}{2}a)/a}
        \delta^{[-\frac{1}{2}+\frac{\beta}{2}+(\beta+1)
        (d_0-\frac{3}{2}a)/a]}.
    \]
    This expression will go to 0 as $\delta\rightarrow 0^+$ if and only
    if
    \[
        -\dfrac{1}{2}+\dfrac{\beta}{2}+(\beta+1)
        \left(\dfrac{d_0-\frac{3}{2}a}{a}\right) > 0 
        \Leftrightarrow d_0 > \tau(\beta)a,
    \]
    and if $d_0 = \tau(\beta)a$ it goes to
    $C_7\lambda^{\{1/2+[d_0-(3/2)a]/a\}}$ as
    $\delta\rightarrow 0^+$.

    If $\beta \ge 1$ we note that the analysis leading to
    \eqref{eq:T3_intermediate} can only be applied if $d_0 >
    \tau(\beta)a = (3/2)a$. In this case we rewrite
    \eqref{eq:T3_intermediate} as 
    \[
        C_7(2+\lambda\delta^{\beta-1})^{\frac{1}{2}}
        (2+\lambda\delta^{\beta-1})^{(d_0-\frac{3}{2}a)/a}
        \delta^{2(d_0-\frac{3}{2}a)/a},
    \]
    which goes to 0 as $\delta\rightarrow 0^+$ if and only if
    $2[d_0-(3/2)a]/a > 0 \Leftrightarrow d_0 > \tau(\beta)a =
    (3/2)a$.  
\end{proof}
\begin{lemma}\label{lem:T_4}
    Suppose $\beta > 0$, $\lambda$ is feasible, $0 < \xi < a$, and $d_0
    \ge \tau(\beta)a$. Then
    \begin{equation*}%\label{eq:T4_limit}
        	\lim_{\delta \rightarrow 0^+} T_4=0.
    \end{equation*}
\end{lemma}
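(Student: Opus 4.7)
The plan is to estimate the integral in \eqref{eq:T_4} by a simple closed form and then show, via the identity $\ee^{-2k_{0}(\delta)a}=2\delta^{2}+\lambda\delta^{\beta+1}=\delta(2\delta+\lambda\delta^{\beta})$ from \eqref{eq:k0}, that the resulting expression goes to $0$ in each of the parameter regimes $0<\beta<1$, $\beta=1$, and $\beta>1$. First I would combine the exponentials inside the integrand and write
\[
    \ee^{-2k(d_{0}-\frac{3}{2}a)}\ee^{-4ka}(\ee^{2k\xi}-1)
    = \ee^{-2k(d_{0}+\frac{a}{2}-\xi)} - \ee^{-2k(d_{0}+\frac{a}{2})}.
\]
Since $d_{0}\ge\tau(\beta)a\ge\frac{3}{2}a$ and $\xi<a$, both exponents are positive, so dropping the second (negative) term and bounding $1/k\le 1/k_{0}(\delta)$ on $[k_{0}(\delta),\infty)$ gives
\[
    \int_{k_{0}(\delta)}^{\infty}\!\!\frac{\ee^{-2k(d_{0}+\frac{a}{2}-\xi)}-\ee^{-2k(d_{0}+\frac{a}{2})}}{k}\dd k
    \le \frac{\ee^{-2k_{0}(\delta)(d_{0}+\frac{a}{2}-\xi)}}{2(d_{0}+\frac{a}{2}-\xi)\,k_{0}(\delta)}.
\]

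Next, setting $s\equiv (d_{0}+\tfrac{a}{2}-\xi)/a$ and invoking $\ee^{-2k_{0}(\delta)a}=\delta(2\delta+\lambda\delta^{\beta})$, the right-hand side becomes $[\delta(2\delta+\lambda\delta^{\beta})]^{s}/[2(d_{0}+\tfrac{a}{2}-\xi)k_{0}(\delta)]$. Substituting back into \eqref{eq:T_4} and using that $\lambda^{2}\delta^{2\beta}+4$ is bounded for small $\delta$, I obtain a bound of the form
\[
    T_{4}\le \frac{C\,(\lambda^{2}\delta^{2\beta}+4)}{k_{0}(\delta)}\,\delta^{\,s-1/2}\,(2\delta+\lambda\delta^{\beta})^{\,s-3/2},
\]
for some constant $C>0$ depending on $d_{0}$, $\xi$, and $\|\rho\|_{L^{2}(\mathcal{M})}$. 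Since $k_{0}(\delta)\to\infty$ as $\delta\to 0^{+}$, it suffices to show that the remaining product of powers of $\delta$ is bounded (in fact, tends to $0$) in each feasibility regime.

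The main obstacle is the careful bookkeeping in the case $0<\beta<1$, where $(2\delta+\lambda\delta^{\beta})^{s-3/2}\sim \lambda^{s-3/2}\delta^{\beta(s-3/2)}$ and one must verify the net exponent $(\beta+1)s-\tfrac{1}{2}-\tfrac{3\beta}{2}$ is strictly positive. Using $d_{0}\ge\tau(\beta)a=(\beta+2)a/(\beta+1)$ this exponent is bounded below by $2-(\beta+1)\xi/a$, which is positive because $\beta+1<2$ and $\xi<a$. For $\beta=1$ (with $2+\lambda\ge 1$) and $\beta>1$ (with $2\delta+\lambda\delta^{\beta}\sim 2\delta$), the same computation reduces to the exponent $2s-2=2(d_{0}+a/2-\xi)/a-2$, which is at least $2-2\xi/a>0$ since $d_{0}\ge\tfrac{3}{2}a$ and $\xi<a$. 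In all three cases $T_{4}\to 0$, completing the proof.
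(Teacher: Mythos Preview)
Your proof is correct and follows essentially the same route as the paper. Both arguments bound $(\ee^{2k\xi}-1)/k$ by $\ee^{2k\xi}/k_0(\delta)$ on $[k_0(\delta),\infty)$, evaluate the resulting exponential integral, and convert $\ee^{-2k_0(\delta)a}$ into $2\delta^2+\lambda\delta^{\beta+1}$; your exponent $(\beta+1)s-\tfrac12-\tfrac{3\beta}{2}$ with $s=(d_0+\tfrac{a}{2}-\xi)/a$ is exactly the paper's $-\tfrac12-\tfrac{3\beta}{2}+(\beta+1)(2d_0+a-2\xi)/(2a)$, and the thresholds you obtain match the paper's $d_0\ge[\beta/(\beta+1)]a+\xi$ and $d_0\ge a/2+\xi$ after simplification.
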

\begin{proof}
    We have, from \eqref{eq:T_4}, that
    \begin{align}
        T_4 
        &= 9C_5\delta^{-\frac{1}{2}}(2\delta+\lambda\delta^{\beta})
            ^{-\frac{3}{2}}(\lambda^2\delta^{2\beta}+4)
            \int_{k_0(\delta)}^{\infty} 
            \ee^{-2k(d_0-\frac{3}{2}a)}\ee^{-4ka}
            \left(\dfrac{\ee^{2k\xi}-1}{k}\right)\dd k \nonumber \\
        &= 9C_5\delta^{-\frac{1}{2}}(2\delta+\lambda\delta^{\beta})
            ^{-\frac{3}{2}}(\lambda^2\delta^{2\beta}+4)
            \int_{k_0(\delta)}^{\infty} \ee^{-k(2d_0+a)}
            \left(\dfrac{\ee^{2k\xi}-1}{k}\right)\dd k \nonumber \\
        &\le 9C_5\dfrac{\delta^{-\frac{1}{2}}
            (2\delta+\lambda\delta^{\beta})^{-\frac{3}{2}}
            (\lambda^2\delta^{2\beta}+4)}{k_0(\delta)}
            \int_{k_0(\delta)}^{\infty} 
            \ee^{-k(2d_0+a-2\xi)}\dd k \nonumber \\
        &= \left[\dfrac{9C_5(\lambda^2\delta^{2\beta}+4)}
            {2d_0+a-2\xi}\right]
            \left[\dfrac{\delta^{-\frac{1}{2}}
            (2\delta+\lambda\delta^{\beta})
            ^{-\frac{3}{2}}}{k_0(\delta)}\right]
            \ee^{-k_0(\delta)(2d_0+a-2\xi)} 
            \nonumber \\
        &= C_8(\lambda^2\delta^{2\beta}+4)
            \left[\dfrac{\delta^{-\frac{1}{2}}
            (2\delta+\lambda\delta^{\beta})
            ^{-\frac{3}{2}}}{k_0(\delta)}\right](2\delta^2+\lambda
            \delta^{\beta+1})^{(2d_0+a-2\xi)/(2a)}, 
            \label{eq:T4_intermediate}
    \end{align}
    where 
    \begin{equation*}%\label{eq:C8}
        C_8 \equiv \dfrac{9C_5}{2d_0+a-2\xi} > 0.
    \end{equation*}
    
    If $0 < \beta < 1$ we rewrite \eqref{eq:T4_intermediate} as
    \[
        \left[\dfrac{C_8(\lambda^2\delta^{2\beta}+4)}
            {k_0(\delta)}\right]
            (2\delta^{1-\beta}+\lambda)
            ^{-\frac{3}{2}}(2\delta^{1-\beta}+\lambda)
            ^{(2d_0+a-2\xi)/(2a)}
            \delta^{[-\frac{1}{2}-\frac{3}{2}\beta 
            + (\beta+1)(2d_0+a-2\xi)/(2a)]}.
    \]
    This expression will go to 0 as $\delta\rightarrow 0^+$ if and only
    if
    \[
        -\frac{1}{2}-\frac{3}{2}\beta + 
            \frac{(\beta+1)(2d_0+a-2\xi)}{2a} \ge 0 
            \Leftrightarrow d_0 \ge 
            \left(\frac{\beta}{\beta+1}\right)a + \xi.
    \]
    We note that $\left[\beta/(\beta+1)\right]a + \xi <
    \tau(\beta)a$ since $0 < \beta < 1$ and $\xi < a$. Thus if $0 <
    \beta < 1$ and $d_0 \ge \tau(\beta)a$ we have $T_4 \rightarrow 0$ as
    $\delta \rightarrow 0^+$.  
    
    If $\beta \ge 1$ we rewrite \eqref{eq:T4_intermediate} as
    \[
        \left[\dfrac{C_8(\lambda^2\delta^{2\beta}+4)}
            {k_0(\delta)}\right](2+\lambda\delta^{\beta-1})
            ^{-\frac{3}{2}}
            (2+\lambda\delta^{\beta-1})^{(2d_0+a-2\xi)/(2a)}
            \delta^{[-2+(2d_0+a-2\xi)/a]}.
    \]
    This expression goes to 0 as $\delta\rightarrow 0^+$ if and only if
    \[
        -2+(2d_0+a-2\xi)/a \ge 0 \Leftrightarrow d_0 
            \ge \frac{a}{2} + \xi.
    \]
    Since $\beta \ge 1$ and $0 < \xi < a$ we have $a/2 + \xi <
    (3/2)a = \tau(\beta)a$; thus if $\beta \ge 1$ and $d_0 \ge
    \tau(\beta)a$ we have $T_4 \rightarrow 0$ as $\delta \rightarrow
    0^+$.
\end{proof}
We summarize our result from this section in the following theorem.
\begin{theorem}\label{thm:bounded}
    Let $\beta > 0$ and $\lambda$ feasible be fixed. Suppose also that
    $0 < \xi < a/2$ and $\rho \in \Pset$. If $d_0 >
    \tau(\beta)a$, then $\lim_{\delta \rightarrow 0^+} E_{\xi}(\delta) =
    0$.
\end{theorem}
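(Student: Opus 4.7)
The proof is essentially a bookkeeping exercise that assembles the pieces already developed in Section~\ref{sec:bounded_pd}. The plan is to start from the decomposition \eqref{eq:lb_pd_T}, namely
\begin{equation*}
    E_{\xi}(\delta) \le T_1 + T_2 + T_3 + T_4
\end{equation*}
valid for $0 < \delta \le \delta_0$, and then invoke each of Lemmas~\ref{lem:T_1}--\ref{lem:T_4} in turn to conclude that every summand on the right tends to $0$ as $\delta \to 0^+$. Since the inequality is term-by-term, the conclusion $\lim_{\delta \to 0^+} E_{\xi}(\delta) = 0$ follows.

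First I would record that the strict inequality $d_0 > \tau(\beta)a$ together with $\tau(\beta) \ge 3/2$ immediately yields $d_0 > (3/2)a$, which is the running structural assumption of all four lemmas. Next I would check each term. Lemma~\ref{lem:T_1} gives $T_1 \to 0$ whenever $d_0 > \tau(\beta)a$ (the nonzero limits listed there appear only in the borderline case $d_0 = \tau(\beta)a$). Lemma~\ref{lem:T_2} gives $T_2 \to 0$ under the additional assumption $0 < \xi < a/2$ together with $d_0 \ge \tau(\beta)a$; this is precisely where the theorem's stronger hypothesis $\xi < a/2$ (rather than $\xi < a$) is used. Lemma~\ref{lem:T_3} gives $T_3 \to 0$ when $d_0 > \tau(\beta)a$, again with the borderline exception at equality. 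Finally Lemma~\ref{lem:T_4} gives $T_4 \to 0$ for all $d_0 \ge \tau(\beta)a$.

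Combining these four limits with the bound \eqref{eq:lb_pd_T} and the squeeze principle (noting that $E_{\xi}(\delta) \ge 0$ by \eqref{eq:def_pd}) yields $\lim_{\delta \to 0^+} E_{\xi}(\delta) = 0$, which is the desired statement. There is no genuine obstacle here; the real work was already absorbed into the four lemmas, and the theorem is a clean synthesis of them. The only subtlety worth flagging is the role of the hypothesis $\xi < a/2$: it is needed exclusively for Lemma~\ref{lem:T_2}, to guarantee the convergence of the integral of $\ee^{-ka}(\ee^{2k\xi}-1)/k$ on $[0,\infty)$, and it is therefore the single place in the argument where the width of the resonant strip interacts with the geometry of the slab.
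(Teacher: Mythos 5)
Your proposal is correct and follows exactly the paper's own argument: apply the decomposition \eqref{eq:lb_pd_T} and invoke Lemmas~\ref{lem:T_1}--\ref{lem:T_4} to show each $T_i \to 0$, then conclude by the squeeze principle. Your additional accounting of precisely where the hypothesis $\xi < a/2$ enters (only in Lemma~\ref{lem:T_2}, to ensure convergence of the relevant integral) is accurate and a nice bit of bookkeeping that the paper's terse one-line proof leaves implicit.
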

\begin{proof}
    If the hypotheses of the theorem hold and if $\delta \le \delta_0$,
    then \eqref{eq:lb_pd_T} and
    Lemmas~\ref{lem:g2bounded}--\ref{lem:T_4} imply
    \begin{equation*}%\label{eq:final_bound}
        0 \le E_{\xi}(\delta) \le T_1 + T_2 + T_3 + T_4 \rightarrow 0 
            \quad \text{as}\ \delta \rightarrow 0^+.
    \end{equation*}
\end{proof}

Figures~\ref{fig:rectangle_bounded}~and~\ref{fig:circle_bounded} are
supporting numerical plots; they are the same as
Figures~\ref{fig:rectangle_blow_up}~and~\ref{fig:circle_blow_up},
respectively, except in this case we have taken $a = d_0/\tau(\beta)$ so
$\rho$ just touches the region of influence (in order to accomplish this
we have taken $\beta = 0.5$ in
Figures~\ref{fig:rectangle_bounded}(c)~and~\ref{fig:circle_bounded}(c)
rather than $\beta = 0.8$ as in
Figures~\ref{fig:rectangle_blow_up}(c)~and~\ref{fig:circle_blow_up}(c)).  
\begin{figure}[!hb]
	\begin{center}
    	\includegraphics{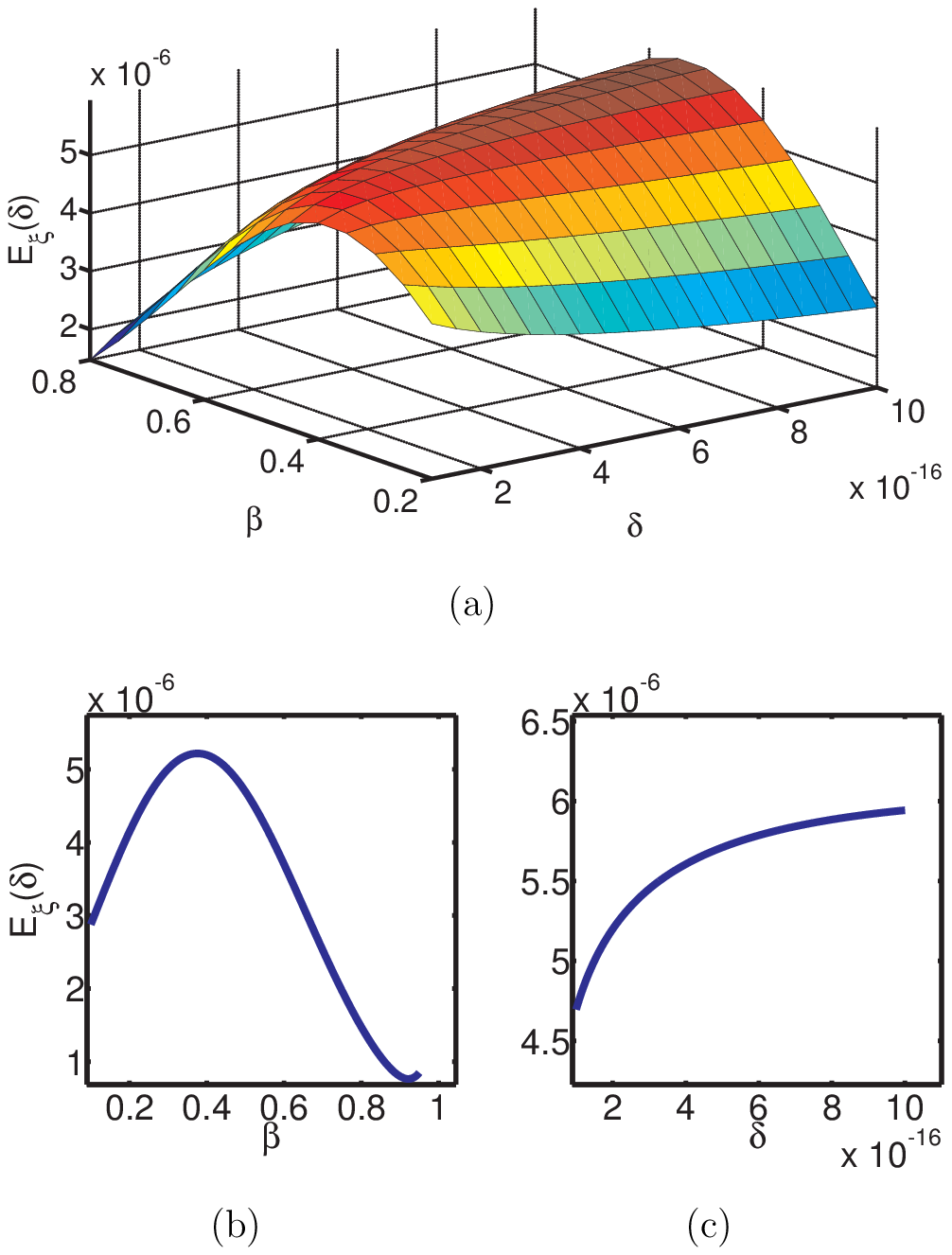}
    \end{center}
    \caption[Rectangular Charge Density Bounded]%
    {(Rectangular $\rho$) In all of these subfigures we take $a =
    d_0/\tau(\beta)$ so $\rho$ is completely outside the region of
    influence. (a) A plot of $E_{\xi}(\delta)$ versus $\beta$ and
    $\delta$ --- the $z$\nobreakdash -axis scale is $10^{-6}$; (b) a
    plot of $E_{\xi}(\delta)$ for $\delta = 10^{-16}$ as a function of
    $\beta$ --- the $y$\nobreakdash -axis scale is $10^{-6}$; (c) a plot
    of $E_{\xi}(\delta)$ for $\beta = 0.5$ as a function of $\delta$ ---
    the $y$\nobreakdash -axis scale is $10^{-6}$.}%
	\label{fig:rectangle_bounded}
\end{figure}
\begin{figure}[!bt]
	\begin{center}
    	\includegraphics{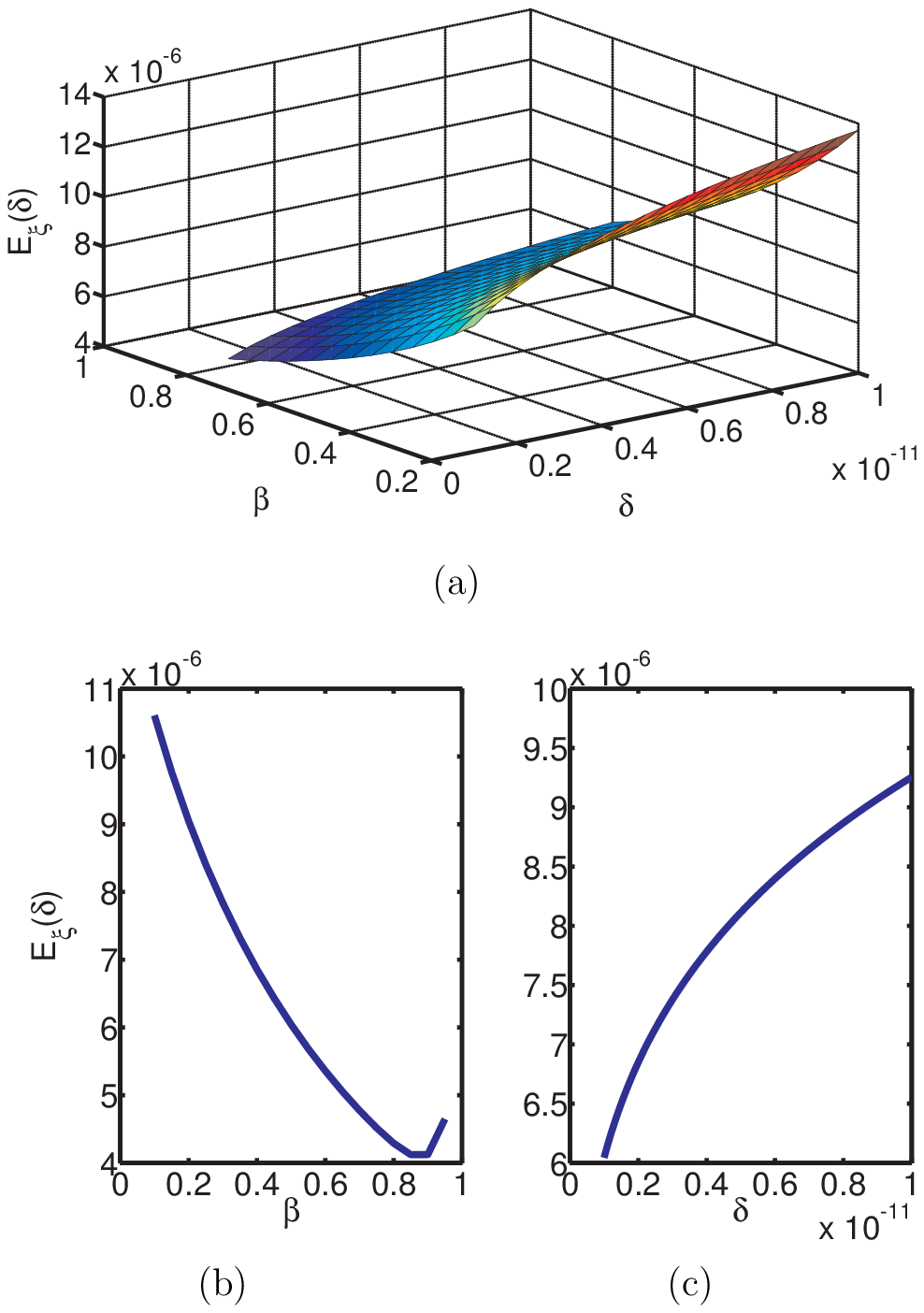}
    \end{center}
    \caption[Circular Charge Density Bounded]
    {(Circular $\rho$) In all of these subfigures we take $a =
    d_0/\tau(\beta)$ so $\rho$ is completely outside the region of
    influence. (a) A plot of $E_{\xi}(\delta)$ versus $\beta$ and
    $\delta$ --- the $z$\nobreakdash -axis scale is $10^{-5}$; (b) a
    plot of $E_{\xi}(\delta)$ for $\delta = 10^{-12}$ as a function of
    $\beta$ --- the $y$\nobreakdash -axis scale is $10^{-6}$; (c) a plot
    of $E_{\xi}(\delta)$ for $\beta = 0.5$ as a function of $\delta$ ---
    the $y$\nobreakdash -axis scale is $10^{-6}$.}%
	\label{fig:circle_bounded}
\end{figure}

%%%%%%%%%%%%%%%%%%%%%%%%%%%%%%%%%%%%%%%%%%%%%%%%%%%%%%%%%%%%%%%%%%%%%%%%
%%%%%%%%%%%%%%%%%%%%%%%%%%%%%%%%%%%%%%%%%%%%%%%%%%%%%%%%%%%%%%%%%%%%%%%%
\newpage
\section{Boundedness of the Potential}\label{sec:bounded_potential}

In this section, we derive bounds on the potential in regions far away
from the slab. In particular, we prove that the potentials $V_c$ and
$V_m$ to the left and right of the slab, respectively, are bounded by
constants that are independent of $\delta$ (for $|x|$ large enough). As
discussed in the Introduction, this is the second requirement for
cloaking by anomalous localized resonance to occur. At this point we do
not address questions regarding which portions of the (rescaled) charge
distribution $\rho/\sqrt{E_{\xi}(\delta)}$ will be cloaked. For example,
if the (rescaled) rectangular charge distribution from
Section~\ref{subsubsec:rectangle} is halfway inside the cloaking region
(so $x_0 = \tau(\beta)a$), we have not yet determined whether it will be
completely cloaked or if only the leading half will be cloaked.  

We begin with some some technical results. The proofs of the next two
lemmas are straightforward and can be found in work by one of the
authors of this paper \cite{Thaler:2014:BVI}.
\begin{lemma}\label{lem:plus}
    Let $\psi_k^+$ and $\psi_k^-$ be defined as in \eqref{eq:psiplus}
    and \eqref{eq:psiminus}, respectively. Then for each $k \in
    \mathbb{R}$ and all $0 < \delta \le \delta_{\mu}$, 
    \begin{equation*}%\label{eq:plus_lower_bound}
            ||k|\psi_k^+ + \psi_k^-|^2 \ge 2|k|^2\ee^{-2|k|a}.
    \end{equation*}
\end{lemma}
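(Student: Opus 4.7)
The plan is to reduce the claimed inequality to the already-established lower bound \eqref{eq:lower_bound_g} on $|g|^2$ via a direct algebraic manipulation. First I would dispose of the case $k=0$: setting $k=0$ in \eqref{eq:psiplus} and \eqref{eq:psiminus} gives $\psi_0^+ = 1$ and $\psi_0^- = 0$, so both sides of the inequality are zero and the bound holds trivially.

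For $k \neq 0$, I would unwind the definition of $g$ in \eqref{eq:g}, namely
\[
g = \frac{2\chi_c\,\ee^{-|k|a}\bigl(\psi_k^+ + |k|^{-1}\psi_k^-\bigr)}{\chi_c+1},
\]
which after multiplying by $|k|$ and rearranging yields the identity
\[
|k|\psi_k^+ + \psi_k^- = \frac{|k|(\chi_c+1)\ee^{|k|a}}{2\chi_c}\,g.
\]
Taking moduli squared gives
\[
\bigl||k|\psi_k^+ + \psi_k^-\bigr|^2 = \frac{|k|^2\ee^{2|k|a}|\chi_c+1|^2}{4|\chi_c|^2}\,|g|^2.
\]

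Now I would apply the lower bound \eqref{eq:lower_bound_g}, which states that $|g|^2 \ge 8\ee^{-4|k|a}|\chi_c/(\chi_c+1)|^2$ for all $0 < \delta \le \delta_\mu$. Substituting this into the identity above, the factors of $|\chi_c|^2$ and $|\chi_c+1|^2$ cancel, the exponentials combine as $\ee^{2|k|a}\cdot\ee^{-4|k|a} = \ee^{-2|k|a}$, and the numerical constants reduce $8/4$ to $2$, yielding precisely $2|k|^2\ee^{-2|k|a}$. There is no real analytic obstacle here since \eqref{eq:lower_bound_g} has been proved earlier; the only thing to be careful about is the algebraic bookkeeping linking $|k|\psi_k^+ + \psi_k^-$ to $g$, and keeping track of the $|k|$ separately at $k=0$ so that the division by $|k|$ in the definition of $g$ does not cause issues.
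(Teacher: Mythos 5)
Your algebra is correct, and the proposal gives a valid derivation. The identity
\[
|k|\psi_k^+ + \psi_k^- \;=\; \frac{|k|(\chi_c+1)\ee^{|k|a}}{2\chi_c}\,\g
\]
follows directly from the definition of $\g$ in \eqref{eq:g}, and combining the resulting equality of moduli with \eqref{eq:lower_bound_g} gives $2|k|^2\ee^{-2|k|a}$ exactly; the $k=0$ case is disposed of correctly.

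There is, however, one point worth flagging. The paper does not give its own proof of Lemma~\ref{lem:plus}; both \eqref{eq:lower_bound_g} and Lemma~\ref{lem:plus} are stated with the proof delegated to the cited thesis \cite{Thaler:2014:BVI}. What your computation actually reveals is that these two inequalities are \emph{equivalent}: the passage from $\g$ to $|k|\psi_k^+ + \psi_k^-$ is an invertible rescaling by $\tfrac{|k|(\chi_c+1)\ee^{|k|a}}{2\chi_c}$, and the constant $8\ee^{-4|k|a}|\chi_c/(\chi_c+1)|^2$ transforms precisely into $2|k|^2\ee^{-2|k|a}$ under this rescaling, with no slack. So you have reduced the lemma to \eqref{eq:lower_bound_g} rather than proved it independently; this is fine given that \eqref{eq:lower_bound_g} is accepted earlier in the paper as an established fact, but if a reader also wanted a proof of \eqref{eq:lower_bound_g}, your argument would become circular. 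A self-contained proof would require expanding $|k|\psi_k^+ + \psi_k^-$ in terms of $\delta$, $\mu$, and $\ee^{\pm|k|a}$ using \eqref{eq:dc}, \eqref{eq:psiplus}--\eqref{eq:psiminus}, and then bounding the modulus directly, which is presumably what the thesis does. Given the paper's own practice of taking \eqref{eq:lower_bound_g} as known, your route is acceptable and arguably the cleanest way to state the dependency, but it should be presented as establishing the equivalence with \eqref{eq:lower_bound_g} rather than as an independent verification.
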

\begin{lemma}\label{lem:minus}
    Let $\psi_k^+$ and $\psi_k^-$ be defined as in \eqref{eq:psiplus}
    and \eqref{eq:psiminus}, respectively. Then there exists $0 <
    \delta_{\psi^-}(\beta,\lambda) \le \delta_{\mu}$ such that 
    \begin{equation*}%\label{eq:minus_upper_bound}
            \left|\psi_k^+ - \frac{1}{|k|}\psi_k^-\right|^2 
            \le \frac{5}{2}(\delta+\mu)^2\ee^{2|k|a}
    \end{equation*}    
    for all $k \in \mathbb{R}$ and all $0 < \delta \le \delta_{\psi^-}$.  
\end{lemma}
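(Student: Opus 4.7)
The plan is to derive a closed-form expression for $\psi_k^+ - \frac{1}{|k|}\psi_k^-$ and then bound its modulus. From \eqref{eq:psiplus} and \eqref{eq:psiminus} we have
$$\psi_k^+ - \frac{1}{|k|}\psi_k^- = \frac{(\chi_c+1)(1-\chi_m)}{2\chi_c}\ee^{|k|a} + \frac{(\chi_c-1)(1+\chi_m)}{2\chi_c}\ee^{-|k|a}.$$
Substituting $\varepsilon_c = 1+\ii\mu$, $\varepsilon_s = -1+\ii\delta$, and $\varepsilon_m = 1$ from \eqref{eq:dc} into \eqref{eq:chic}, we obtain $\chi_m = -1+\ii\delta$, so $1-\chi_m = 2-\ii\delta$ and $1+\chi_m = \ii\delta$. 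Moreover $\chi_c+1 = \ii(\delta+\mu)/(1+\ii\mu)$, $\chi_c-1 = [-2+\ii(\delta-\mu)]/(1+\ii\mu)$, and $1/\chi_c = (1+\ii\mu)/(-1+\ii\delta)$. The factors of $(1+\ii\mu)$ cancel, and after using $\ii(2-\ii\delta) = \delta+2\ii$ and $\ii[-2+\ii(\delta-\mu)] = -[(\delta-\mu)+2\ii]$ the identity collapses to
$$\psi_k^+ - \frac{1}{|k|}\psi_k^- = \frac{(\delta+\mu)(\delta+2\ii)\,\ee^{|k|a} - \delta\bigl[(\delta-\mu)+2\ii\bigr]\ee^{-|k|a}}{2(-1+\ii\delta)}.$$
The crucial structural observation is that $\chi_c+1$ carries an explicit factor of $(\delta+\mu)$, which is precisely what yields the $(\delta+\mu)^2$ on the right-hand side of the target inequality.

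Next I would take the modulus squared. Using $|-1+\ii\delta|^2 = 1+\delta^2$ and separating the numerator into real and imaginary parts gives
$$\left|\psi_k^+ - \frac{1}{|k|}\psi_k^-\right|^2 = \frac{\delta^2\bigl[(\delta+\mu)\ee^{|k|a} - (\delta-\mu)\ee^{-|k|a}\bigr]^2 + 4\bigl[(\delta+\mu)\ee^{|k|a} - \delta\ee^{-|k|a}\bigr]^2}{4(1+\delta^2)}.$$
Setting $t = \ee^{-2|k|a} \in (0,1]$ and dividing the numerator by $\ee^{2|k|a}$ yields the expression $q(t) \equiv \delta^2[(\delta+\mu) - (\delta-\mu)t]^2 + 4[(\delta+\mu) - \delta t]^2$. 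This is a convex quadratic in $t$ with positive leading coefficient $\delta^2(\delta-\mu)^2 + 4\delta^2$, so its maximum on the interval $[0,1]$ is attained at an endpoint. A direct computation gives $q(0) = (\delta^2+4)(\delta+\mu)^2$ and, thanks to the cancellations $(\delta+\mu) - (\delta-\mu) = 2\mu$ and $(\delta+\mu) - \delta = \mu$, $q(1) = 4\mu^2(1+\delta^2)$. Since $\mu \le \delta+\mu$ and $\delta^2+4 \le 4(1+\delta^2)$ hold for every $\delta \ge 0$, both endpoint values are dominated by $4(1+\delta^2)(\delta+\mu)^2$. Dividing by $4(1+\delta^2)$ produces the uniform bound $|\psi_k^+ - \frac{1}{|k|}\psi_k^-|^2 \le (\delta+\mu)^2\ee^{2|k|a}$, which is strictly stronger than the claim with constant $5/2$; consequently one may simply take $\delta_{\psi^-} = \delta_{\mu}$.

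The genuinely nontrivial step is the convexity-in-$t$ observation. A naive triangle inequality applied directly to the two exponential terms yields a coefficient of roughly $4$ in front of $(\delta+\mu)^2\ee^{2|k|a}$, which is too weak for the claimed constant $5/2$. The convexity argument exploits the cancellation at $t=1$, where the leading $\delta\,\ee^{|k|a}$ contributions in the two bracketed expressions eliminate each other and leave only an $O(\mu)$ remainder that is safely controlled by $(\delta+\mu)$. The only other delicacy is purely algebraic bookkeeping: carefully tracking the complex-valued ratios built from $\chi_c$ and $\chi_m$ until they collapse to the clean real/imaginary decomposition that exposes the convex quadratic structure.
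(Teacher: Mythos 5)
Your proof is correct, and every algebraic step checks out: the identity
$$\psi_k^+ - \frac{1}{|k|}\psi_k^- = \frac{(\delta+\mu)(\delta+2\ii)\ee^{|k|a} - \delta[(\delta-\mu)+2\ii]\ee^{-|k|a}}{2(-1+\ii\delta)},$$
the separation into real and imaginary parts, the substitution $t=\ee^{-2|k|a}\in(0,1]$, and the evaluations $q(0)=(\delta^2+4)(\delta+\mu)^2$, $q(1)=4\mu^2(1+\delta^2)$ are all accurate, and the convexity-in-$t$ observation (positive leading coefficient $\delta^2[(\delta-\mu)^2+4]$) legitimately reduces the bound to an endpoint check. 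A quick sanity check at $k=0$ confirms: there $\psi_0^+ - \lim_{k\to 0}\frac{1}{|k|}\psi_k^- = 1 - \varepsilon_c/\varepsilon_m = -\ii\mu$, with modulus squared $\mu^2$, matching $q(1)/[4(1+\delta^2)]$.

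The paper itself does not give a proof of this lemma but defers to the cited thesis \cite{Thaler:2014:BVI}, so a direct route-by-route comparison is not possible from the text. What can be said is that your argument is self-contained and in fact establishes the \emph{sharper} constant $1$ in place of $5/2$, valid for the full range $0<\delta\le\delta_\mu$; consequently one may take $\delta_{\psi^-}=\delta_\mu$ without any further shrinkage, since the only inequality you need on $\delta$ is $\mu=\delta+\lambda\delta^\beta\ge 0$. The appearance of $5/2$ and of a potentially smaller $\delta_{\psi^-}$ in the lemma statement suggests the referenced proof likely uses a cruder estimate (e.g., a direct triangle inequality on the two exponential terms, which, as you note, would give a constant near $4$, combined with smallness of $\delta$ to recover $5/2$). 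Your convexity-and-endpoint device avoids that loss entirely by exploiting the cancellation at $t=1$ where the $O(\delta)$ contributions collide and leave only an $O(\mu)$ remainder.
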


%%%%%%%%%%%%%%%%%%%%%%%%%%%%%%%%%%%%%%%%%%%%%%%%%%%%%%%%%%%%%%%%%%%%%%%%

\subsection{The Potential $V_c$}\label{subsec:V_c_bounded}

Note that $V_c$ is harmonic for $x < 0$ by \eqref{eq:V_pde_full} and
\eqref{eq:dc}. In addition, since $V \in
L^2_{\mathrm{loc}}(\mathbb{R}^2)$, $V \in
L^1_{\mathrm{loc}}(\mathbb{R}^2)$ as well. Hence the Weyl Theorem (see,
e.g., Theorem~18.G in the book by Zeidler \cite{Zeidler:1990:NFA})
implies that $V$ is infinitely differentiable for $x < 0$ (after
modification on a set of measure $0$), so we can examine pointwise
values of $V_c$. The next lemma states that, far enough away from the
slab, the potential $V_c$ is bounded for all $\delta \le \delta_{\mu}$.
\begin{lemma}\label{lem:V_c_bounded}
    Suppose $\rho \in \Pset$. Then there is a positive constant $C_9$
    such that $V_c(x,y) \le C_9$ for all $x < -3a$ and for all $0 <
    \delta \le \delta_{\mu}$.
\end{lemma}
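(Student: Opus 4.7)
The plan is to bound $V_c(x,y)$ via Fourier inversion applied to the explicit formula $\widehat{V}_c(x,k) = A_k \ee^{|k|x}$ from \eqref{eq:Vc_hat}, producing a $\delta$-independent majorant whose $L^1_k$ norm is finite as soon as $x < -3a$. Concretely, I would write
\[
    V_c(x,y) = \frac{1}{2\pi}\int_{-\infty}^{\infty} A_k \ee^{|k|x}\ee^{\ii ky}\dd k
\]
(the Fourier inversion being justified \emph{a posteriori} by the $L^1_k$ bound just below, combined with the smoothness of $V_c$ for $x<0$ from the Weyl Theorem quoted just before the statement), so that $|V_c(x,y)| \le (2\pi)^{-1}\int_{-\infty}^{\infty}|A_k|\ee^{|k|x}\dd k$.

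Next, I would estimate $|A_k|$ from \eqref{eq:Ak} by combining the lower bound $||k|\psi_k^+ + \psi_k^-|^2 \ge 2|k|^2 \ee^{-2|k|a}$ of Lemma~\ref{lem:plus} with the upper bound $|I_k|/|k|\le C_I$ of Lemma~\ref{lem:Ik_properties}(6):
\[
    |A_k| \;=\; \frac{|I_k|}{\ee^{-|k|a}\,||k|\psi_k^+ + \psi_k^-|}
    \;\le\; \frac{|I_k|}{\sqrt{2}\,|k|}\,\ee^{2|k|a}
    \;\le\; \frac{C_I}{\sqrt{2}}\,\ee^{2|k|a}.
\]
The two ingredients are each uniform in $\delta$: Lemma~\ref{lem:plus} holds for all $0<\delta\le\delta_\mu$, and $C_I$ depends only on $\rho$ via the sixth part of Lemma~\ref{lem:Ik_properties}. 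Therefore the bound on $|A_k|$ is itself $\delta$-independent.

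Then, for $x<-3a$, we have $2a+x<-a<0$, and hence
\[
    |V_c(x,y)| \;\le\; \frac{C_I}{2\pi\sqrt{2}}\int_{-\infty}^{\infty}\ee^{|k|(2a+x)}\dd k
    \;=\; \frac{C_I}{\pi\sqrt{2}\,(|x|-2a)} \;\le\; \frac{C_I}{\pi\sqrt{2}\,a} \;\equiv\; C_9,
\]
which is the desired $\delta$-independent bound. The main obstacle is making sure the estimate on $|A_k|$ is genuinely uniform in $\delta$ and valid for \emph{all} $k\in\mathbb{R}$ (including a neighborhood of $k=0$, where $I_k\to 0$ and the denominator also vanishes, so naive bounds of the form $|I_k|\le C\ee^{-|k|d_0}$ would give a singular integrand); this is precisely handled by invoking the $|I_k|/|k|$ bound from Lemma~\ref{lem:Ik_properties}(6) rather than the decay bound from Lemma~\ref{lem:Ik_properties}(2). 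Once this uniform estimate is in hand, the rest is a routine computation.
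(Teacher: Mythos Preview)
Your proof is correct, and it takes a genuinely different route from the paper's. The paper bounds $|\Vhat_c(x,k)|^2$ using Lemma~\ref{lem:plus} (as you do), but then splits the $k$-integral at $k=1$, uses Lemma~\ref{lem:Ik_properties}(6) on $[0,1]$ and Lemma~\ref{lem:Ik_properties}(2) on $[1,\infty)$, obtains an $L^2_k$ bound on $\Vhat_c(x,\cdot)$ for $x<-2a$, passes to an $L^2_y$ bound on $V_c(x,\cdot)$ via Plancherel, and finally converts this to a pointwise bound for $x<-3a$ using the mean value property of harmonic functions together with Cauchy--Schwarz on a ball of radius $a$. In contrast, you bypass the $L^2$ machinery and the mean value step entirely: by using the global bound $|I_k|/|k|\le C_I$ from Lemma~\ref{lem:Ik_properties}(6) for \emph{all} $k$, you obtain an $L^1_k$ majorant for $\Vhat_c(x,\cdot)$ directly and invoke pointwise Fourier inversion. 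Your approach is shorter and more elementary; the paper's approach has the minor advantage of exhibiting the intermediate $L^2_y$ bound explicitly, and its use of Lemma~\ref{lem:Ik_properties}(2) for large $k$ would in principle allow the $L^2$ bound to hold on the larger range $x\le d_0-2a$, though this is not exploited. The threshold $x<-3a$ arises for different reasons in the two arguments (radius of the averaging ball versus uniform control of $(|x|-2a)^{-1}$), but the end result is the same.
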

\begin{proof}
    From \eqref{eq:Vc_hat} and \eqref{eq:Ak} we have
    \begin{equation}\label{eq:Ak_bound_1}
        |\Vhat_c(x,k)|^2 = |A_k|^2\ee^{2|k|x} 
            = \frac{|I_k|^2\ee^{2|k|x}}{\ee^{-2|k|a}
            ||k|\psi_k^+ + \psi_k^-|^2}.
    \end{equation}
    In combination with Lemma~\ref{lem:plus}, this implies that
    \begin{equation}\label{eq:Ak_bound}
        |\Vhat_c(x,k)|^2 \le \frac{|I_k|^2}{2|k|^2}\ee^{2|k|(x+2a)}
    \end{equation}
    for $x < 0$, for all $k \in \mathbb{R}$, and for all $0 < \delta \le
    \delta_{\mu}$. In particular, note that the expression in
    \eqref{eq:Ak_bound} is an even function of $k$ if $\rho$ is
    real-valued due to Lemma~\ref{lem:Ik_properties}. Then for $x < 0$
    \eqref{eq:Ak_bound} implies that
    \begin{align}
        \ds\int_{-\infty}^{\infty} |\Vhat_c(x,k)|^2 \dd k
        &\le \frac{1}{2}\ds\int_{-\infty}^{\infty} 
            \frac{|I_k|^2}{|k|^2}\ee^{2|k|(x+2a)} \dd k \nonumber\\
        &= \int_{0}^{\infty} 
            \frac{|I_k|^2}{|k|^2}\ee^{2|k|(x+2a)} \dd k \nonumber \\
        &= \int_{0}^{1} 
            \frac{|I_k|^2}{k^2}\ee^{2k(x+2a)} \dd k
            + \int_{1}^{\infty} 
            \frac{|I_k|^2}{k^2}\ee^{2k(x+2a)} \dd k\nonumber \\
        &= \int_{0}^{1} 
            \frac{|I_k|^2}{k^2}\ee^{2k(x+2a)} \dd k
            + (d_1-d_0)\|\rho\|^2_{L^2(\mathcal{M})} 
            \int_{1}^{\infty} 
            \frac{\ee^{2k(x+2a-d_0)}}{k^2}\dd k,
            \label{eq:split_integral}
    \end{align}
    thanks to Lemma~\ref{lem:Ik_properties}. Since
    \[
        \frac{|I_k|^2}{k^2} \le C_I^2
    \] 
    for $k \ge 0$ by Lemma~\ref{lem:Ik_properties}, the first integral
    in \eqref{eq:split_integral} converges for any $x \in \mathbb{R}$.
    The second integral in \eqref{eq:split_integral} converges if and
    only if $x \le d_0-2a$ (note that $d_0 -2a > -a$ since $d_0 > a$).
    Then if $x < -2a$ we have, from \eqref{eq:split_integral}, that
    \begin{equation*}
        \int_{-\infty}^{\infty} |\Vhat_c(x,k)|^2 \dd k\le 
        \int_{0}^{1} C_I^2 \dd k
            + (d_1-d_0)\|\rho\|^2_{L^2(\mathcal{M})} 
            \int_{1}^{\infty} \frac{1}{k^2}\dd k 
        = C_I^2 + (d_1-d_0)\|\rho\|^2_{L^2(\mathcal{M})}.     
    \end{equation*}

    Then the Plancherel Theorem \eqref{eq:Plancherel} implies that for
    each $x < -2a$ we have
    \begin{equation}\label{V_c_intermediate_bound}
        \int_{-\infty}^{\infty} |V_c(x,y)|^2 \dd y 
            = \frac{1}{2\pi}\int_{-\infty}^{\infty}|\Vhat_c(x,k)|^2\dd k
            \le \frac{1}{2\pi}
            \left[C_I^2 + (d_1-d_0)\|\rho\|^2_{L^2(\mathcal{M})}\right].
    \end{equation}
    Since $V_c(x,y)$ is harmonic for $x < -2a$, it satisfies the mean
    value property (see, e.g., Chapter 2 in the book by Evans
    \cite{Evans:2010:PDE}): for any point $(x,y)$ with $x < -3a$ we have
    \[
        V(x,y) = \frac{1}{|B_a((x,y))|}\int_{B_a((x,y))}V(x',y') 
            \dd y' \dd x',
    \]
    where $B_a((x,y))$ is the ball of radius $a$ centered at the point
    $(x,y)$; note that all points $(x',y') \in B_a((x,y))$ satisfy $x' <
    -2a$ since $x < -3a$. Finally by the Cauchy--Schwarz inequality and
    \eqref{V_c_intermediate_bound} we have
    \begin{align*}
        |V_c(x,y)| 
        &= \frac{1}{|B_a((x,y))|}
            \left|\int_{B_a((x,y))}V(x',y') \dd y'\dd x'\right| \\
        &\le \frac{1}{|B_a((x,y))|}\int_{B_a((x,y))}|V(x',y')|
            \dd y'\dd x' \\
        &\le \frac{1}{|B_a((x,y))|} \left[\int_{B_a((x,y))}|V(x',y')|^2
            \dd y' \dd x'\right]^{\frac{1}{2}}
            \left[\int_{B_a((x,y))} \dd y' \dd x'\right]^{\frac{1}{2}}\\
        &\le \frac{1}{|B_a((x,y))|^{\frac{1}{2}}} 
            \left[\int_{x-a}^{x+a}\int_{-\infty}^{\infty} |V(x',y')|^2 
            \dd y'\dd x'\right]^{\frac{1}{2}} \\
        &\le \int_{x-a}^{x+a} \frac{1}{2\pi^{3/2}a}
            \left[C_I^2 + (d_1-d_0)\|\rho\|^2_{L^2(\mathcal{M})}\right] 
            \dd x' \\
        &= C_9,
    \end{align*}
    where $C_9 = \pi^{-3/2}\left[C_I^2 
    + (d_1-d_0)\|\rho\|^2_{L^2(\mathcal{M})}\right]$.
\end{proof}

%%%%%%%%%%%%%%%%%%%%%%%%%%%%%%%%%%%%%%%%%%%%%%%%%%%%%%%%%%%%%%%%%%%%%%%%

\subsection{The Potential $V_m$}\label{subsec:V_m_bounded}

We will now show that $|V_m(x,y)|$ is bounded for $x$ large enough. In
particular, we at least assume that $x > d_1$. We begin with a lemma
that is very similar to Lemma~\ref{lem:Ik_properties}. For $x > d_1$ we
define 
\begin{equation}\label{eq:Jk}
        J_k(x) \equiv \int_{d_0}^{d_1}\rhohat(s,k)\ee^{-|k|(x-s)}\dd s.
\end{equation}
The proof of the following lemma can be found in \cite{Thaler:2014:BVI}.
\begin{lemma}\label{lem:Jk_properties}
    Suppose $\rho \in \Pset$ (where $\Pset$ is defined in
    \eqref{eq:P_def}) and that, for $x > d_1$, $J_k(x)$ is defined as in
    \eqref{eq:Jk}. Then, for every $x > d_1$, $J_k(x)$ satisfies the
    following properties:  
    \begin{enumerate}
        \item for all $k \in \mathbb{R}$, $|J_k(x)|^2 \le
            (d_1-d_0)\|\rho\|^2_{L^2(\mathcal{M})}\ee^{-2k(x-d_1)}$;
        \item if $\rho$ is real-valued, then $|J_k(x)|^2$ is an even 
            function of $k$ for $k \in \mathbb{R}$;
        \item $J_k(x)$ is continuous at $k$ for each $k \in \mathbb{R}$;
        \item $\ds\lim_{k\rightarrow 0} J_k(x) = J_0(x) = 0$;
        \item for each $x > d_1$, 
            \[
                \lim_{k\rightarrow 0}\dfrac{|J_k(x)|}{|k|} = |C_{0}| <
                    \infty, 
            \]
            where $C_0$ is defined in
            Lemma~\ref{lem:Ik_properties}; moreover, there is a positive
            constant $C_J$, independent of $x$, such that $|J_k(x)|/|k|
            \le C_J$ for all $x > d_1$ and all $k \in [0,1]$.  
   \end{enumerate}
\end{lemma}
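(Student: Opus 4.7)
The plan is to follow the template of Lemma~\ref{lem:Ik_properties}, taking extra care to obtain estimates that are uniform in $x > d_1$. The key reformulation is the factorization
\[
    J_k(x) = \ee^{-|k|x}\int_{d_0}^{d_1}\rhohat(s,k)\ee^{|k|s}\dd s,
\]
which cleanly separates the $x$-dependence from the $k$-dependence.

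For the first four properties the arguments essentially parallel those for $I_k$. Property (1) follows by Cauchy--Schwarz applied to the defining integral of $J_k(x)$, combined with the bound $\ee^{-2|k|(x-s)} \le \ee^{-2|k|(x-d_1)}$ (valid because $s \le d_1 \le x$) and an estimate on $\int_{d_0}^{d_1}|\rhohat(s,k)|^2\dd s$ obtained from the compact support of $\rho$ in $y$. Property (2) comes from $\rhohat(s,-k) = \overline{\rhohat(s,k)}$ (for real $\rho$) together with the evenness of $|k|$, which yields $J_{-k}(x) = \overline{J_k(x)}$. Property (3) is a direct dominated-convergence argument using pointwise continuity of $\rhohat(s,\cdot)$ (Lemma~\ref{lem:Ik_properties}(1)) and the integrable majorant from (1). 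Property (4) is immediate from the charge-neutrality condition \eqref{eq:zero_charge}, since $J_0(x) = \int_{d_0}^{d_1}\int_{h_0}^{h_1}\rho(s,y)\dd y\dd s = 0$.

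Property (5) is the heart of the lemma. For $k > 0$, I write $J_k(x)/k = \ee^{-kx}\,(M_k/k)$, where $M_k \equiv \int_{d_0}^{d_1}\rhohat(s,k)\ee^{ks}\dd s$ depends only on $k$. Smoothness of $\rhohat(s,\cdot)$ (Lemma~\ref{lem:Ik_properties}(1)) implies $M_k$ is $C^\infty$ on $[0,1]$, and $M_0 = 0$ by the same charge-neutrality computation used in (4). Hence $M_k/k$ extends continuously to $k = 0$ and is bounded on $[0,1]$ by some constant $C_J$. Since $x > d_1 > 0$ and $k \ge 0$, one has $\ee^{-kx} \le 1$, yielding $|J_k(x)|/k \le C_J$ uniformly in $x > d_1$ and $k \in (0,1]$; property (2) handles $k < 0$ symmetrically. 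A direct computation gives $M'(0) = \int_{d_0}^{d_1}[s\rhohat(s,0) + \partial_k\rhohat(s,0)]\dd s$, and since $\int s\rhohat(s,0)\dd s$ is real while $\int \partial_k\rhohat(s,0)\dd s = -\ii \int\int y\rho(s,y)\dd y\dd s$ is purely imaginary when $\rho$ is real, $|M'(0)|$ has the same magnitude as $C_0$ from Lemma~\ref{lem:Ik_properties}(6). The main obstacle is precisely this uniformity in $x$: a naive Taylor expansion of $J_k(x)$ in $k$ produces a factor $(x-s)$ in the linear term that is not bounded uniformly in $x$, and the factorization trick is what circumvents this difficulty by pushing all the $x$-dependence into the harmless factor $\ee^{-|k|x} \le 1$.
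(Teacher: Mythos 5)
Your proposal is correct. The paper itself does not reproduce a proof for this lemma (it points to the thesis \cite{Thaler:2014:BVI}), so there is no in-paper argument to compare against, but your reasoning is sound and complete, and it is the natural route. Properties (1)--(4) are indeed the same routine as for $I_k$: Cauchy--Schwarz with $\ee^{-2|k|(x-s)} \le \ee^{-2|k|(x-d_1)}$, conjugate symmetry $J_{-k}(x) = \overline{J_k(x)}$, dominated convergence, and charge neutrality, respectively. For property (5) you correctly identify the crux: a naive Taylor expansion of $J_k(x)$ in $k$ produces an $(x-s)$ factor in the remainder that prevents an $x$-uniform bound. Your factorization $J_k(x) = \ee^{-|k|x}M_{|k|}$ (together with $J_k(x) = \ee^{-|k|x}\overline{M_{|k|}}$ for $k<0$ when $\rho$ is real), with all the $x$-dependence concentrated in the harmless factor $\ee^{-|k|x}\le 1$, cleanly resolves this. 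Two details you get right that are easy to miss: first, differentiating $M_k$ under the integral is justified because $\rho$ is bounded with compact support, so $\sup_{k\in[0,1]}|\partial_k^n\rhohat(s,k)|$ is bounded; second, the stated limit constant is $|C_0|$ from Lemma~\ref{lem:Ik_properties} even though $M'(0) = \int s\rhohat(s,0)\,\mathrm{d}s + \int\partial_k\rhohat(s,0)\,\mathrm{d}s$ differs from $C_0 = -\int s\rhohat(s,0)\,\mathrm{d}s + \int\partial_k\rhohat(s,0)\,\mathrm{d}s$ by a sign on the real part, which does not change the modulus since the two summands are respectively real and purely imaginary for real-valued $\rho$. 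That magnitude identity is exactly what reconciles your $M'(0)$ with the stated $|C_0|$.
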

\begin{lemma}\label{lem:V_m_bounded}
    Suppose $\rho \in \Pset$. Then there is a positive constant $C_{10}$
    such that $|V_m(x,y)| \le C_{10}$ for all $x > a + \max\{d_1,4a\}$
    and for all $\delta \le \delta_{\psi^-}$ (where $\delta_{\psi^-}$ is
    defined in Lemma~\ref{lem:minus}).
\end{lemma}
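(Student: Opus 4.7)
The plan is to derive a clean representation for $\Vhat_m(x,k)$ valid when $x > d_1$, in which the exponentially growing-in-$x$ terms have already cancelled, and then to mimic the strategy used for $V_c$ in Lemma~\ref{lem:V_c_bounded}: bound $\int_{-\infty}^{\infty}|\Vhat_m(x,k)|^2 \dd k$ uniformly in $\delta$, convert to a bound on $\int_{-\infty}^{\infty}|V_m(x,y)|^2 \dd y$ via Plancherel, and finally pass to a pointwise bound using harmonicity of $V_m$ on $\{x > d_1\}$ together with the mean value property and Cauchy--Schwarz.

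For the first step, starting from \eqref{eq:Vm_hat}, I would write $\cosh$ and $\sinh$ as sums of exponentials and exploit the identity
\[
    \frac{A_k}{2}\left(\psi_k^+ + \frac{\psi_k^-}{|k|}\right) = \frac{I_k \ee^{|k|a}}{2|k|},
\]
which is immediate from \eqref{eq:Ak}. For $x > d_1$ the integral $|k|^{-1}\int_a^x \sinh[|k|(x'-x)]\rhohat(x',k) \dd x'$ is supported in $[d_0,d_1]$ and can be rewritten, using \eqref{eq:Ik} and \eqref{eq:Jk}, as $J_k(x)/(2|k|) - I_k\ee^{|k|x}/(2|k|)$. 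The $\ee^{|k|x}$ contribution cancels against the first term, leaving, for $k \ne 0$ and $x > d_1$,
\[
    \Vhat_m(x,k) = \frac{J_k(x)}{2|k|} + \frac{I_k\bigl(|k|\psi_k^+ - \psi_k^-\bigr)}{2|k|\bigl(|k|\psi_k^+ + \psi_k^-\bigr)}\,\ee^{-|k|(x-2a)}.
\]
This is the key structural identity, and it is exactly where the condition at infinity hard-wired into \eqref{eq:Ak} pays off.

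For the second step, Lemmas~\ref{lem:plus} and \ref{lem:minus} combine to yield
\[
    \left|\frac{|k|\psi_k^+ - \psi_k^-}{|k|\psi_k^+ + \psi_k^-}\right|^2 \le \frac{5}{4}(\delta+\mu)^2 \ee^{4|k|a},
\]
which is bounded uniformly in $\delta \le \delta_{\psi^-}$ since $\delta + \mu$ is. Coupled with the estimate $|I_k|^2 \le (d_1-d_0)\|\rho\|^2_{L^2(\mathcal{M})}\ee^{-2|k|d_0}$ from Lemma~\ref{lem:Ik_properties} and $|J_k(x)|^2 \le (d_1-d_0)\|\rho\|^2_{L^2(\mathcal{M})}\ee^{-2|k|(x-d_1)}$ from Lemma~\ref{lem:Jk_properties}, the squared modulus of the second term is bounded above by a $\delta$-independent constant times $k^{-2}\ee^{-2|k|(x+d_0-4a)}$, and that of the first term by a constant times $k^{-2}\ee^{-2|k|(x-d_1)}$. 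Near $k = 0$ the bounds $|I_k|/|k| \le C_I$ and $|J_k(x)|/|k| \le C_J$ make the integrals finite. Provided $x$ is large enough that both exponents are strictly positive, splitting the $k$-integral at $|k|=1$ (as in the proof of Lemma~\ref{lem:V_c_bounded}) gives a $\delta$-independent bound on $\int_{-\infty}^{\infty}|\Vhat_m(x,k)|^2 \dd k$; Plancherel \eqref{eq:Plancherel} then produces a uniform $L^2$ bound on $V_m(x,\cdot)$.

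For the third step, since $V \in L^2_{\mathrm{loc}}$ satisfies $\Delta V = 0$ on $\{x > d_1\}$, Weyl's theorem (as invoked in the proof of Lemma~\ref{lem:V_c_bounded}) gives that $V_m$ is smooth and harmonic there. Applying the mean value property on the ball $B_a((x,y))$ together with Cauchy--Schwarz converts the $L^2$ bound into a pointwise bound, provided the entire ball lies in the region where both harmonicity and the $L^2$ bound apply. This requires simultaneously $x-a > d_1$ and $x-a > 4a$ (using $d_0 > a$ to replace $4a - d_0$ by $3a$ and then allowing a margin), yielding the stated condition $x > a + \max\{d_1,4a\}$. The main obstacle is the first step: identifying and verifying the algebraic cancellation that eliminates the $\ee^{|k|x}$ terms. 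Without it, $|\Vhat_m(x,k)|^2$ would blow up exponentially in $k$ and the Plancherel step would be meaningless. Once the clean representation is in place, the rest of the argument is essentially a direct adaptation of the $V_c$ argument.
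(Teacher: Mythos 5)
Your proof is correct and follows essentially the same route as the paper: you reproduce the paper's equation \eqref{eq:V_m_hat_again} (with $A_k$ substituted explicitly), then bound $\int|\Vhat_m(x,k)|^2\dd k$ uniformly in $\delta$ using Lemmas~\ref{lem:plus}, \ref{lem:minus}, \ref{lem:Ik_properties}, and \ref{lem:Jk_properties}, and pass to a pointwise bound via Plancherel, harmonicity, and the mean value property exactly as in the proof of Lemma~\ref{lem:V_c_bounded}. The only cosmetic difference is that the paper retains $A_k$ in the intermediate representation and invokes \eqref{eq:Ak_bound_1} later, whereas you substitute its formula up front; the estimates and the final threshold $x > a + \max\{d_1, 4a\}$ are identical.
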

\begin{proof}
    Based on our choice of $A_k$ and $I_k$ in \eqref{eq:Ak} and
    \eqref{eq:Ik}, respectively, for $x > d_1$ we have
    \begin{equation}\label{eq:V_m_hat_again} 
        \Vhat_m(x,k) = \ee^{-|k|x}
            \left(\dfrac{A_k\psi_k^+\ee^{|k|a}}{2}-
            \dfrac{A_k\psi_k^-\ee^{|k|a}}{2|k|}\right)
            +\dfrac{J_k(x)}{2|k|};
    \end{equation}
    see \eqref{eq:Vm_hat}. Then \eqref{eq:Ak}, the triangle inequality,
    and the fact that $(p+q)^2 \le 2p^2+2q^2$ for real numbers $p$ and
    $q$ imply, for $x > d_1$, that
    \begin{align*}
        |\Vhat_m(x,k)|^2 
        &= \left|\ee^{-|k|x}\left(\dfrac{A_k\psi_k^+\ee^{|k|a}}{2}-
            \dfrac{A_k\psi_k^-\ee^{|k|a}}{2|k|}\right)
            +\dfrac{J_k(x)}{2|k|}\right|^2 \\
        &\le \frac{\ee^{-2|k|(x-a)}}{2}|A_k|^2
            \left|\psi_k^+-\frac{1}{|k|}\psi_k^-\right|^2 
            + \dfrac{|J_k(x)|^2}{2|k|^2}.
    \end{align*}
    Then \eqref{eq:Ak_bound_1}, Lemma~\ref{lem:plus}, and
    Lemma~\ref{lem:minus} imply, for $0 < \delta \le \delta_{\psi^-}$,
    that
    \begin{align*}
        |\Vhat_m(x,k)|^2 
        &\le 
        \frac{\ee^{-2|k|(x-a)}}{2}\frac{|I_k|^2\ee^{4|k|a}}{2|k|^2}
            \left|\psi_k^+-\frac{1}{|k|}\psi_k^-\right|^2 
            +\dfrac{|J_k(x)|^2}{2|k|^2} \\
        &\le \frac{5\ee^{-2|k|(x-3a)}|I_k|^2}{8|k|^2}
            (\delta+\mu)^2\ee^{2|k|a}+\dfrac{|J_k(x)|^2}{2|k|^2} \\
        &\le\frac{5}{8}(\delta+\mu)^2\frac{|I_k|^2}{|k|^2}
            \ee^{-2|k|(x-4a)}+\dfrac{|J_k(x)|^2}{2|k|^2}.
            \numberthis \label{eq:V_m_bounded_intermediate}
    \end{align*}
    Note that the expression in \eqref{eq:V_m_bounded_intermediate} is
    even as a function of $k$ by
    Lemmas~\ref{lem:Ik_properties}~and~\ref{lem:Jk_properties}. Then we
    have
    \begin{alignat*}{4}
        &\ds\int_{-\infty}^{\infty} |\Vhat_m(x,k)|^2 \dd k
        &\, \le \, &\frac{5}{8}(\delta+\mu)^2
            \int_{-\infty}^{\infty}\frac{|I_k|^2}{|k|^2}
            \ee^{-2|k|(x-4a)} \dd k 
            + \int_{-\infty}^{\infty}\dfrac{|J_k(x)|^2}{2|k|^2}
                \dd k \\
        &&\, = \, &\frac{5}{4}(\delta+\mu)^2\left[
            \int_{0}^{1}\frac{|I_k|^2}{k^2}
                \ee^{-2k(x-4a)} \dd k 
                + \int_{1}^{\infty}\frac{|I_k|^2}{k^2}
                \ee^{-2k(x-4a)} \dd k \right] \\*
        &&&+ \int_{0}^{1}\dfrac{|J_k(x)|^2}{k^2}
                \dd k + \int_{1}^{\infty}\dfrac{|J_k(x)|^2}{k^2}
                \dd k.
    \end{alignat*}
    Then Lemmas~\ref{lem:Ik_properties}~and~\ref{lem:Jk_properties}
    imply
    \begin{align*}
        &\ds\int_{-\infty}^{\infty} |\Vhat_m(x,k)|^2 \dd k 
        \le  \frac{5}{4}(\delta+\mu)^2C_0^2
            \int_0^1\ee^{-2k(x-4a)} \dd k + C_J^2 \\
        &\qquad +(d_1-d_0)\left\|\rho\right\|^2_{L^2(\mathcal{M})}
                \left[\frac{5}{4}(\delta+\mu)^2
                \int_{1}^{\infty}\frac{\ee^{-2k(x-4a+d_0)}}{k^2}\dd k 
                + \int_{1}^{\infty}\dfrac{\ee^{-2k(x-d_1)}}{k^2}\dd k
                \right]. \numberthis\label{eq:V_m_int}
    \end{align*}

    If $x > \max\{d_1,4a\}$, then all of the integrals in
    \eqref{eq:V_m_int} converge. In particular, the integral from $0$ to
    $1$ and both of the integrals from $1$ to $\infty$ converge to
    numbers less than or equal to $1$ in that case. Therefore
    \eqref{eq:V_m_int} becomes 
    \begin{equation*}
        \int_{-\infty}^{\infty} |\Vhat_m(x,k)|^2 \dd k
        \le \frac{5}{4}(\delta+\mu)^2C_0^2 + C_J^2
        + (d_1-d_0)\left\|\rho\right\|^2_{L^2(\mathcal{M})}
        \left[\frac{5}{4}(\delta+\mu)^2 + 1\right]\\
        \equiv \widetilde{C}_{10}.
    \end{equation*}
    If we define $b \equiv a+\max\{d_1,4a\}$, for example, then for $x >
    b$ each point $(x',y') \in B_a((x,y))$ satisfies $x' >
    \max\{d_1,4a\}$. Since $V_m$ is harmonic in the region where $x' >
    d_1$, it satisfies the mean value property there. Using this in
    combination with the Plancherel Theorem (just as in the proof of
    Lemma~\ref{lem:V_c_bounded}) gives
    \[
        |V_m(x,y)| \le \int_{x-a}^{x+a} 
            \frac{\widetilde{C}_{10}}{2\pi^{3/2}a} \dd x' \equiv C_{10},
    \]
    where $C_{10} = \pi^{-3/2}\widetilde{C}_{10}$.
\end{proof}

%%%%%%%%%%%%%%%%%%%%%%%%%%%%%%%%%%%%%%%%%%%%%%%%%%%%%%%%%%%%%%%%%%%%%%%%
%%%%%%%%%%%%%%%%%%%%%%%%%%%%%%%%%%%%%%%%%%%%%%%%%%%%%%%%%%%%%%%%%%%%%%%%

\section*{Acknowledgments}
AET would like to thank Patrick Bardsley, Elena Cherkaev, David Dobson,
Fernando Guevara Vasquez, and Hyeonbae Kang for helpful discussions.

%%    Bibliographies can be prepared with BibTeX using amsplain,
%%    amsalpha, or (for "historical" overviews) natbib style.
%\bibliographystyle{amsplain}
%%    Insert the bibliography data here.
%\bibliography{alr_paper_QAM_version}

%%%%%%%%%%%%%%%%%%%%%%%%%%%%%%%%%%%%%%%%%%%%%%%%%%%%%%%%%%%%%%%%%%%%%%%%
%%%%%%%%%%%%%%%%%%%%%%%%%%%%% Bibliography %%%%%%%%%%%%%%%%%%%%%%%%%%%%%
%%%%%%%%%%%%%%%%%%%%%%%%%%%%%%%%%%%%%%%%%%%%%%%%%%%%%%%%%%%%%%%%%%%%%%%%

\ifx \bblindex \undefined \def \bblindex #1{} \fi
\providecommand{\bysame}{\leavevmode\hbox to3em{\hrulefill}\thinspace}
\providecommand{\MR}{\relax\ifhmode\unskip\space\fi MR }
% \MRhref is called by the amsart/book/proc definition of \MR.
\providecommand{\MRhref}[2]{%
  \href{http://www.ams.org/mathscinet-getitem?mr=#1}{#2}
}
\providecommand{\href}[2]{#2}

\end{document}